\newtheorem{theorem}{Theorem}
\newtheorem{corollary}{Corollary}
\newtheorem{lemma}{Lemma}
\newtheorem{example}{Example}
\newtheorem{remark}{Remark}
\newtheorem{definition}{Definition}
\newenvironment{proof}[1][Proof]{\textbf{#1.} }{\ \rule{0.5em}{0.5em}}
\newcommand{\RR}{\mathbb{R}}
\newcommand{\argmin}{\mathop{\rm arg\min}}
\title{Geometric Inference for General High-Dimensional Linear Inverse Problems}
\author{T. Tony Cai\footnote{The research of Tony Cai was supported in part by NSF Grants DMS-1208982 and DMS-1403708, and NIH Grant R01 CA127334.}, \;\,
Tengyuan Liang\footnote{Tengyuan Liang acknowledges the support of Winkelman Fellowship.} \; and \; Alexander Rakhlin\footnote{Alexander Rakhlin gratefully acknowledges the support of NSF
under grant CAREER DMS-0954737.} \\ \\
Department of Statistics\\ The Wharton School \\ University of Pennsylvania}
\date{}
\begin{document}
\maketitle

\begin{abstract}
This paper presents a unified geometric framework for the statistical analysis of a general ill-posed linear inverse model which includes  as special cases  noisy compressed sensing, sign vector recovery, trace regression, orthogonal matrix estimation, and noisy matrix completion. We propose computationally feasible convex programs for statistical inference including estimation, confidence intervals and hypothesis testing. A theoretical framework is developed to characterize the local estimation rate of convergence and to provide statistical inference guarantees. Our results are built based on the local conic geometry and duality. 
The difficulty of statistical inference is captured by the geometric characterization of the local tangent cone through 
the Gaussian width and Sudakov minoration estimate.

\end{abstract}

\section{Introduction}
\label{Sec.Intro}

Driven by a wide range of applications, high-dimensional linear inverse problems such as noisy compressed sensing,  sign vector recovery, trace regression, orthogonal matrix estimation, and noisy matrix completion have drawn significant recent interest in several fields, including statistics, applied mathematics, computer science, and electrical engineering.  These problems are often studied in a case-by-case fashion and the focus so far is mainly on estimation. Although similarities in the technical analyses have been suggested heuristically, a general unified theory for statistical inference including estimation, confidence intervals and hypothesis testing is still yet to be developed.

In this paper, we consider a general linear inverse model 
\begin{align}
\label{GLI.Model}
 Y = \mathcal{X}(M) + Z 
\end{align}
where $M \in \mathbb{R}^{p}$ is the vectorized version of the parameter of interest, $\mathcal{X}: \mathbb{R}^{p} \rightarrow \mathbb{R}^n$ is a linear operator, and $Z\in \mathbb{R}^n$ is a noise vector. We observe $(\mathcal{X}, Y)$ and wish to recover the unknown parameter $M$. A particular focus is on the high-dimensional setting where the ambient dimension $p$ of the parameter $M$ is much larger than the sample size $n$, i.e., the dimension of $Y$. In such a setting,  the parameter of interest $M$ is commonly assumed to have, with respect to a given atom set $\mathcal{A}$, a certain low complexity structure which captures the true dimension of the statistical estimation problem. A number of high-dimensional inference problems actively studied in the recent literature can be seen as special cases of this general linear inverse model.

\noindent
{\bf High Dimension Linear Regression/Noisy Compressed Sensing.} In high-dimensional linear regression, one observes $(X, Y)$ with 
\begin{align}
\label{sparse.model}
Y = X M + Z, 
\end{align}
where $Y\in \mathbb{R}^n$,  $X\in \mathbb{R}^{n\times p}$ with $p\gg n$, $M\in \mathbb{R}^p$ is a sparse signal, and $Z\in\mathbb{R}^n$ is a noise vector. The goal is to recover the unknown sparse signal  of interest $M\in\mathbb{R}^p$ based on the observation $(X, Y)$ through an efficient algorithm. Many estimation methods including $\ell_1$-regularized procedures such as the Lasso and Dantzig Selector have been developed and analyzed. See, for example, \cite{tibshirani1996regression,candes2007dantzig,bickel2009simultaneous,buhlmann2011statistics} and the references therein. Confidence intervals and hypothesis testing for high-dimensional linear regression have also been actively studied in the last few years. A common approach is to first construct a de-biased Lasso or de-biased scaled-Lasso estimator and then make inference based on the asymptotic normality of  low-dimensional functionals of the de-biased estimator. See, for example, \cite{buhlmann2013statistical, zhang2014confidence, van2014asymptotically, javanmard2014confidence}.
 
\noindent
{\bf Trace Regression.}  Accurate recovery of  a  low-rank matrix  based on a small number of linear measurements has a wide range of applications and has drawn much recent attention in several fields.
See, for example, \cite{recht2010guaranteed,koltchinskii2011neumann,rohde2011estimation,koltchinskii2011nuclear,candes2011tight}. In trace regression, one observes $(X_i, Y_i)$, $i=1, ..., n$  with
\begin{align}
\label{trace.reg}
Y_i = {\sf Tr}(X_i^T M) + Z_i,
\end{align}
where  $Y_i\in \RR$, $X_i\in \RR^{p_1\times p_2}$ are measurement matrices,  and $Z_i$ are noise. The goal is to recover  the unknown matrix $M \in \mathbb{R}^{p_1 \times p_2}$ which is assumed to be of low rank. Here the dimension of the parameter $M$ is $p\equiv p_1p_2 \gg n$. A number of constrained and penalized nuclear minimization methods have been introduced and studied in both the noiseless and noisy settings. See the aforementioned references for further details.


\noindent
{\bf Sign Vector Recovery.} The setting of  sign vector recovery is similar to the one for the high-dimensional regression except the signal of interest is a sign vector. More specifically, in sign vector recovery, one observes  $(X,Y)$ with
\begin{align}
\label{sign.reg}
Y = X M + Z
\end{align} 
where $Y \in \mathbb{R}^n, X \in \mathbb{R}^{n \times p}$, $M \in \{ +1,-1\}^p$ is a sign vector, and $Z \in \mathbb{R}^n$ is a noise vector. The goal is to recover the unknown sign signal of interest $M$. Exhaustive search over the parameter set is computationally prohibitive. The noiseless case of \eqref{sign.reg},  known as the generalized multi-knapsack problem \citep{khuri1994zero,mangasarian2011probability},  can be solved through an integer program which is known to be computationally difficult even for checking the uniqueness of the solution, see \citep{prokopyev2005complexity,valiant1986np}.

\noindent
{\bf Orthogonal Matrix Recovery.} In some applications the matrix of interest in trace regression is known to be an orthogonal/rotation matrix \citep{ten1977orthogonal,gower2004procrustes}. 
More specifically, in orthogonal matrix recovery, we observe $(X_i,Y_i)$, $i = 1,\ldots,n$ as in the trace regression model \eqref{trace.reg} 
where  $X_i\in \RR^{m \times m}$ are measurement matrices and  $M \in \mathbb{R}^{m \times m}$ is an orthogonal matrix. The goal is to recover the unknown $M$ using an efficient algorithm.  
Computational difficulties come in because of the non-convex constraint. See \cite{chandrasekaran2012convex}.

\noindent
{\bf Matrix Completion.} 
Matrix completion aims to recover a low-rank matrix based on observations of a subset of entries. It can be viewed as a special case of the trace regression model \eqref{trace.reg} with the measurement matrices of the form $e_{i_k}e_{j_k}^\intercal$ for $k=1, ..., n$, where $e_i$ is the $i$th standard basis vector, and $i_1,\cdots, i_n$ and $j_1, \cdots, j_n$ are  randomly  drawn with replacement from $\{1, \cdots, p_1\}$ and $\{1, \cdots, p_2\}$, respectively. That is, the individual entries of the matrix $M$ are observed at randomly selected positions. The goal is to recover the low-rank matrix $M$ based on the partial observations $Y$. See \cite{candes2009exact, recht2011simpler} for matrix recovery in the noiseless case and  \cite{candes2010matrix, chatterjee2012matrix, cai2013matrix} for the noisy case.



Other high-dimensional inference problems that are closely connected to the structured linear inverse model  \eqref{GLI.Model} include high-dimensional covariance matrix estimation where the covariance matrix of interest is banded/sparse/spiked  \citep{karoui2008operator,cai2010optimal,cai2013sparse,cai2014optimal},  sparse and low rank decomposition in robust principal component analysis \citep{candes2011robust}, and sparse noise and sparse parameter in demixing problem \citep{amelunxen2013living}, to name a few. We will discuss the connections in details in Section \ref{o.e}.

There are several fundamental questions for this general class of high-dimensional linear inverse problems.
\begin{itemize}
\item[]{\bf Statistical Questions:} How well can the parameter $M$ be estimated? What is the intrinsic difficulty of the estimation problem? How to provide inference guarantees for $M$, i.e., confidence intervals and hypothesis testing, in general?
\item[]{\bf Computational Questions:} Are there computationally efficient (polynomial time complexity) algorithms that are also sharp in terms of statistical estimation and inference? 
\end{itemize}

 \subsection{High-Dimensional Linear Inverse Problems}

Linear inverse problems have been well studied in the classical setting where the parameter of interest lies in a convex set. See, for example, \cite{tikhonov1977methods}, \cite{o1986statistical}, and \cite{johnstone1990speed}. In particular, for estimation of a linear functional over a convex parameter space, \cite{donoho1994statistical} developed an elegant geometric characterization of the minimax theory in terms of the modulus of continuity. 
However, the theory relies critically on the convexity assumption of the parameter space. As shown in \cite{cai2004adaptation,cai2004minimax}, the behavior of the functional estimation and confidence interval problems is significantly different even when the parameter space is the union of two convex sets. For the high-dimensional linear inverse problems considered in the present paper, the parameter space is highly non-convex and the theory and techniques developed in the classical setting are not readily applicable.

For high-dimensional linear inverse problems such as those mentioned earlier, the parameter space has low-complexity and exhaustive search often leads to the optimal solution in terms of statistical accuracy. However, it is computationally prohibitive and requires the prior knowledge of the true low complexity. In recent years, relaxing the problem to a convex program such as $\ell_1$ or nuclear norm minimization and then solving it with optimization techniques have proven to be a powerful approach in individual cases. 

Unified approaches to signal recovery recently appeared both in the applied mathematics literature \citep{chandrasekaran2012convex,amelunxen2013living,oymak2013simple} and in the statistics literature \citep{negahban2012unified}. \cite{oymak2013simple} studied the generalized LASSO problem through conic geometry with a simple bound in terms of the $\ell_2$ norm of the noise vector. \citep{chandrasekaran2012convex} introduced the notion of atomic norm to define a low complexity structure and showed that Gaussian width captures the minimum sample size required to ensure recovery.  \cite{amelunxen2013living} studied the phase transition for the convex algorithms for a wide range of problems. These papers suggested that the geometry of the local tangent cone determines the minimum number of samples to ensure successful recovery in the  noiseless or deterministic noise settings. \cite{negahban2012unified} studied the regularized-$M$ estimation with a decomposable norm penalty in the additive Gaussian noise setting.

Another line of research is focused on a detailed analysis of the Empirical Risk Minimization (ERM) \citep{lecue2013learning}. Here, the objective function is the excess risk for the squared error loss. The excess risk is shown to have the rate of $n^{-1/2}$ or $n^{-1}$, in terms of the sample size $n$. The analysis is based on the empirical processes indexed by the general subgaussian functional classes, with a proper localization radius around the best parameter. In addition to convexity, the ERM requires the prior knowledge on the size of the bounded parameter set of interest. This knowledge is not needed for the algorithm we propose in the present paper. 

Compared to estimation, there is a paucity of methods and theoretical results for confidence intervals and hypothesis testing for these linear inverse models. Specifically for high-dimensional linear regression, confidence intervals and significance testing have drawn increasing recent attention. \cite{buhlmann2013statistical} studied a bias correction method based on the ridge estimation, while \cite{zhang2014confidence} proposed bias correction via score vector using scaled Lasso as the initial estimator. \cite{van2014asymptotically,javanmard2014confidence} focused on de-sparsifying the Lasso via constructing a near ``inverse'' of the Gram matrix, one uses node-wise Lasso while the other uses an $\ell_\infty$ constrained quadratic programing, with similar theoretical guarantees. 
To the best of our knowledge, inference procedures for other high-dimensional linear inverse models are yet to be developed.

\subsection{Geometric Characterization of Linear Inverse Problems}

Under the linear inverse model \eqref{GLI.Model}, the parameter $M$ is assumed to have  certain low complexity structure with respect to a given atom set in a high-dimensional Euclidean space, which introduces a non-convex constraint. The non-convex constraint poses difficulty for the inverse problem. However, proper convex relaxation based on the general atom structure provides a computationally feasible solution. 
Our goal is to recover and make inference on the parameter $M$ based on the observation $(\mathcal{X}, Y)$ efficiently. This problem can also be framed in the language of geometric functional analysis \citep{ledoux1991probability,vershynin2011lectures}. For point estimation, we are interested in how the local convex geometry around the true parameter affects the estimation procedure and the intrinsic estimation difficulty, in terms of the local upper bound and the local minimax lower bound respectively. Note that local tangent cone plays a key role in our analysis. For statistical inference, we develop general procedures induced by the convex geometry, which answers inferential questions such as confidence intervals and hypothesis testing efficiently. We are also interested in the sample size condition induced by the local convex geometry for valid inference guarantees. 

Complexity measures such as Gaussian width and Rademacher complexity are well studied in the empirical processes theory \citep{ledoux1991probability,talagrand1996majorizing}, and are known to capture the difficulty of the estimation problem. Covering/Packing entropy and volume ratio \citep{yang1999information,vershynin2011lectures,ma2013volume} are also widely used in geometric functional analysis to measure the complexity. In this paper, we show how these geometric quantities affect the computationally efficient estimation/inference procedure, as well as the intrinsic difficulty of the estimation/inference problem.

Our main result can be summarized as follows. We propose unified convex algorithms for estimation and inference, and then analyze the theoretical properties for these algorithms. 
On the local tangent cone $T_{\mathcal{A}}(M)$ (the formal definition is given in \eqref{Tangent.Cone}, and $B_2^p$ below denotes Euclidean ball in $\mathbb{R}^p$), geometric quantities such as the Gaussian width $w(B_2^p \cap T_{\mathcal{A}}(M))$, Sudakov minoration estimate $e(B_2^p \cap T_{\mathcal{A}}(M))$, and volume ratio $v(B_2^p \cap T_{\mathcal{A}}(M))$ (defined in Section \ref{Geometric-Quantity.sec}) capture the rate of convergence of the linear inverse problem. In terms of the upper bound, with overwhelming probability, if $n \succsim w^2(B_2^p \cap T_{\mathcal{A}}(M))$, the estimation error under $\ell_2$ norm for our algorithm is of the rate 
\begin{align*}
\sigma  \frac{\gamma_{\mathcal{A}}(M) w(\mathcal{XA})}{\sqrt{n}} 
\end{align*}
where $\gamma_{\mathcal{A}}(M)$ is the local asphericity ratio defined in \eqref{Asphere.Ratio}.
The minimax lower bound for estimation under $\ell_2$ norm over the local tangent cone $T_{\mathcal{A}}(M)$satisfies
\begin{align*}
\sigma \left[ \frac{e(B_2^p \cap T_{\mathcal{A}}(M))}{\sqrt{n}}  \vee \frac{v(B_2^p \cap T_{\mathcal{A}}(M))}{\sqrt{n}} \right].
\end{align*}
For statistical inference, we establish valid asymptotic normality for any low-dimensional linear functional of the parameter $M$ under the condition
$$
\lim_{n,p \rightarrow \infty } \frac{\gamma^2_{\mathcal{A}}(M) w^2(\mathcal{XA})}{\sqrt{n}}  = 0,
$$
which can be compared to the condition for point estimation consistency
$$
\lim_{n,p \rightarrow \infty } \frac{\gamma_{\mathcal{A}}(M) w(\mathcal{XA})}{\sqrt{n}}  = 0.
$$
We remark on the critical difference on the sufficient conditions between valid inference and estimation consistency - more stringent condition on sample size $n$ is required for inference beyond estimation.
Intuitively, statistical inference is purely geometrized by Gaussian width and Sudakov minoration estimate.

\subsection{Our Contributions}

The main contributions of the present paper are two-fold.
\begin{itemize}
\item {\bf Unified convex algorithms for estimation and inference.}~ We propose a general computationally feasible convex program that provides near optimal rate of convergence simultaneously for a collection of high-dimensional linear inverse problems. We also provide a general convex feasibility program that leads to inference guarantees for any finite linear contrast, such as confidence intervals and hypothesis testing. 

\item {\bf Local geometric theory: Upper and lower bounds, confidence intervals and hypothesis testing.}~ A unified theoretical framework is provided for analyzing high-dimensional linear inverse problems based on the local conic geometry and duality. The point estimation and statistical inference are adaptive in the sense that the difficulty (rate of convergence, conditions on sample size, etc.) automatically adapts to the low complexity structure of the true parameter. Both the inference guarantee and estimation consistency are closely related and rely on conditions induced by the local conic geometry. 
It is shown that the minimax lower bound for estimation over the local tangent cone 
is captured by the Sudakov minoration estimate or volume ratio. 
The results geometrize statistical inference for general linear inverse problems with low complexity structure.
\end{itemize}

\subsection{Organization of the Paper}

The rest of the paper is structured as follows. In Section \ref{Sec.Prep}, after notation, definitions,  and basic convex geometry are reviewed, we formally present convex programs for recovering the parameter $M$, and for providing inference guarantees for $M$, based on the observation $(\mathcal{X}, Y)$. The properties of the proposed procedures are then studied in Section \ref{GE.Thy}. Under the Gaussian setting, a geometric theory is developed in terms of the local upper bound, the minimax lower bound as well as the confidence intervals and hypothesis testing. Applications to particular high-dimensional estimation problems are also included at the end of this section. Section \ref{Gen.Thy} extends the geometric theory beyond Gaussian. Relations between the upper and lower bounds are discussed. Further discussions appear in Section \ref{Sec.Dis}, and the proofs of the main results are given in Section \ref{Sec.Pf} and Appendix \ref{Sec.Tech.Lma} and \ref{Pf.Cor}.

\section{Preliminaries and Algorithms}
\label{Sec.Prep}

We review in this section notation and definitions that will be used in the rest of the paper. In particular, we introduce basics of convex geometry  including important geometric quantities that will be shown to be instrumental in characterizing the difficulty for statistical estimation and inference in later sections. We then collect some known results on the complexity measures,  Gaussian width, Sudakov estimate and volume ratio, that will be used repeatedly later. Finally, we will formally introduce our general estimation and inference programs based on the convex geometry and duality.
 
In this paper, we use $\| \cdot \|_{\ell_q}$ to denote the $\ell_q$ norm of a vector and use $B_2^p$ to denote the unit Euclidean ball in $\RR^p$. For a matrix $M$,  denote by $\|M\|_F$, $\| M \|_*$, and  $\| M\|$ the Frobenius norm, nuclear norm, and spectral norm of  $M$ respectively. When there is no confusion, we also denote $\| M \|_F  = \| M \|_{\ell_2}$ for a matrix $M$. 
For a vector $V \in \mathbb{R}^p$, denote its transpose by $V^*$. The inner product  on vectors is defined as usual $\langle V_1, V_2 \rangle = V_1^* V_2$. For matrices $\langle M_1, M_2 \rangle = {\sf Tr}(M_1^* M_2) = {\sf Vec}(M_1)^* {\sf Vec}(M_2)$, where ${\sf Vec}(M) \in \mathbb{R}^{pq}$ denotes the vectorized version of matrix $M \in \mathbb{R}^{p \times q}$. $\mathcal{X}: \mathbb{R}^p \rightarrow \mathbb{R}^n$ denotes a linear operator from $\mathbb{R}^p$ to $\mathbb{R}^n$. Following the notation above, $M^* \in \mathbb{R}^{q \times p}$ is the adjoint (transpose) matrix of $M$ and $\mathcal{X}^*: \mathbb{R}^n \rightarrow \mathbb{R}^p$ is the adjoint operator of $\mathcal{X}$ such that $\langle \mathcal{X} (V_1),V_2 \rangle = \langle V_1, \mathcal{X}^* (V_2) \rangle $.


For a convex compact set $K$ in a metric space with the metric $d$, we say that $S \subset K$ is an $\epsilon$-covering set if $\forall x \in K$, $\exists y \in S$ such that $d(x,y) < \epsilon$. And we say that $S \subset K$ is an $\epsilon$-packing set if $\forall x,y \in S, x\neq y$, $d(x,y) \geq \epsilon$. The $\epsilon$-entropy for a convex compact set $K$ with respect to the metric $d$ is denoted in the following way: $\epsilon$-packing entropy $\log \mathcal{M}(K,\epsilon,d)$ is the logarithm cardinality of the largest $\epsilon$-packing set, and $\epsilon$-covering entropy $\log \mathcal{N}(K,\epsilon,d)$ is the logarithm cardinality of the smallest $\epsilon$-covering set with respect to metric $d$. A well known result is $\mathcal{M}(K,2\epsilon,d) \leq \mathcal{N}(K,\epsilon,d) \leq \mathcal{M}(K,\epsilon,d)$. When the metric $d$ is the usual Euclidean distance, we will omit $d$ in $\mathcal{M}(K,\epsilon,d)$ and $\mathcal{N}(K,\epsilon,d)$ and simply write $\mathcal{M}(K,\epsilon)$ and $\mathcal{N}(K,\epsilon)$. 

For two sequences of positive numbers $\{a_n\}$ and $\{b_n\}$, we denote $a_n \gtrsim b_n$ if there exists a constant $c_0$ such that $\frac{a_n}{b_n} \geq c_0$ for all $n$ and  $a_n \lesssim b_n$ if there exists a constant $C_0$ such that $\frac{a_n}{b_n} \leq C_0$ for all $n$. We write $a_n \asymp b_n$ 
if $a_n \gtrsim b_n$ and $a_n \lesssim b_n$. Throughout the paper, $c, C, c_0, C_0$ denote constants that may vary from place to place.

\subsection{Basic Convex Geometry}
\label{Basic.Geo}
We consider the linear inverse model \eqref{GLI.Model} in the high-dimensional setting where the dimension $p$ can possibly be much larger than the sample size $n$ and the parameter of interest $M$  lies in a certain ``low complexity'' space. Examples include  sparsity in noisy compressed sensing and low rank in trace regression and matrix completion. The linear operator $\mathcal{X}$ in the model \eqref{GLI.Model} can be viewed as a matrix $\mathcal{X} \in \mathbb{R}^{n\times p}$. Without loss of generality, we assume $\mathcal{X}$ is standardized to have unit column $\ell_2$ norm. The noise vector $Z\in \mathbb{R}^n$ is assumed to have the noise level $\sigma/\sqrt{n}$ and the covariance  matrix $\frac{\sigma^2}{n} {\bf I}_{n}$.

The notion of low complexity is based on a collection of basic atoms. We denote the collection of these basic atoms as an atom set $\mathcal{A}$, either countable or uncountable, as illustrated in Figure \ref{fig:atom_set}. A parameter $M$ is of complexity $k$ in terms of the atoms in $\mathcal{A}$ if $M$ can be expressed as a linear combination of at most $k$ atoms in $\mathcal{A}$, i.e., there exists a decomposition 
\begin{align*}
M = \sum_{a\in \mathcal{A}} c_a(M) \cdot a, ~\text{where}~ \sum_{a \in \mathcal{A}} 1_{\{c_a(M) \neq 0\}} \leq k 
\end{align*}


\begin{figure}[pht]
\centering
\begin{minipage}{.45\textwidth}
  \centering
  \includegraphics[width=\textwidth]{./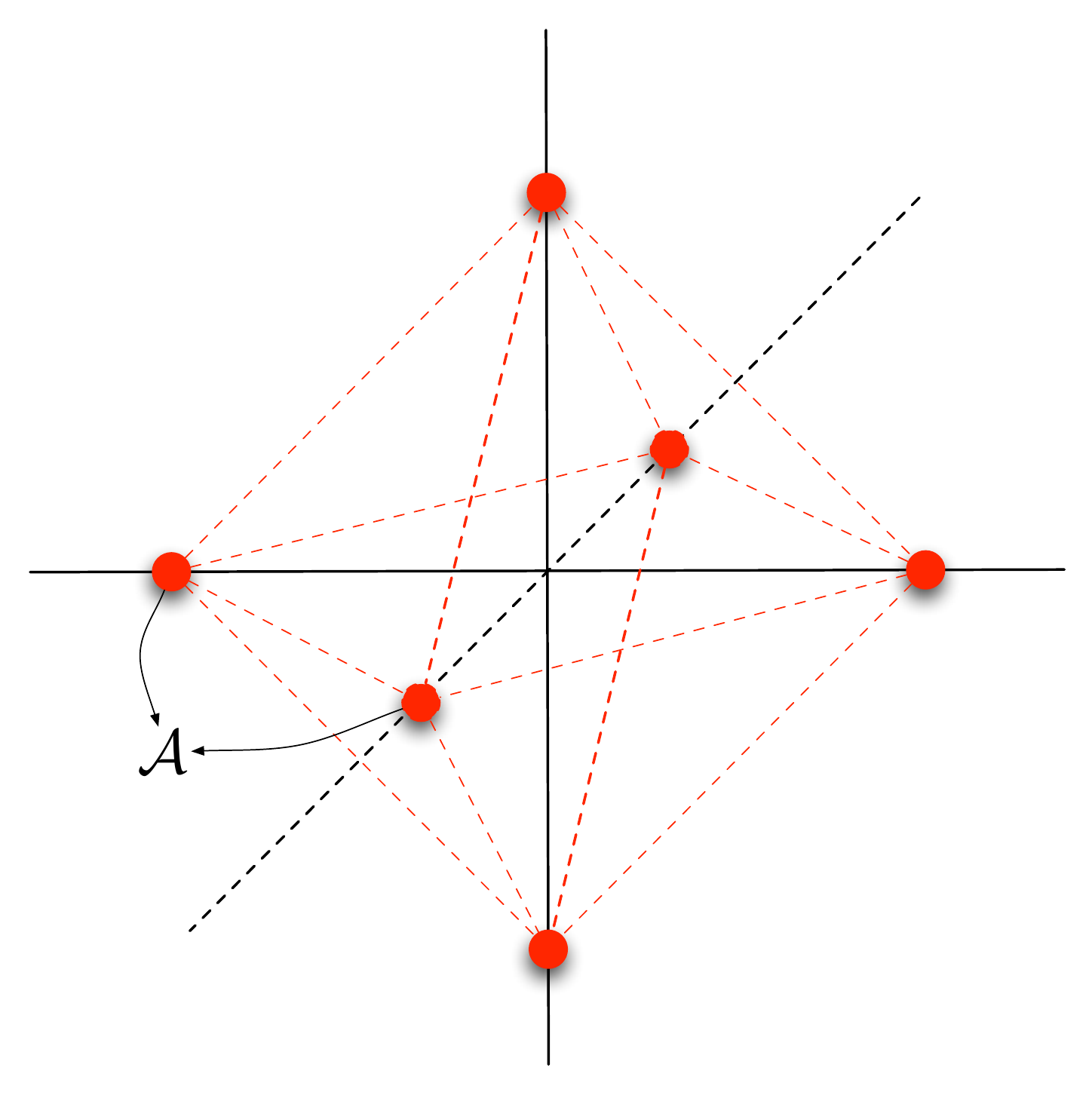}
  \caption{\small Atom set illustration. The red dots denote atoms. This particular example illustrates the atoms being basis vectors for sparse regression.}
  \label{fig:atom_set}
\end{minipage}%
\hspace{0.4cm}
\begin{minipage}{.45\textwidth}
  \centering
   \includegraphics[width=0.95\textwidth]{./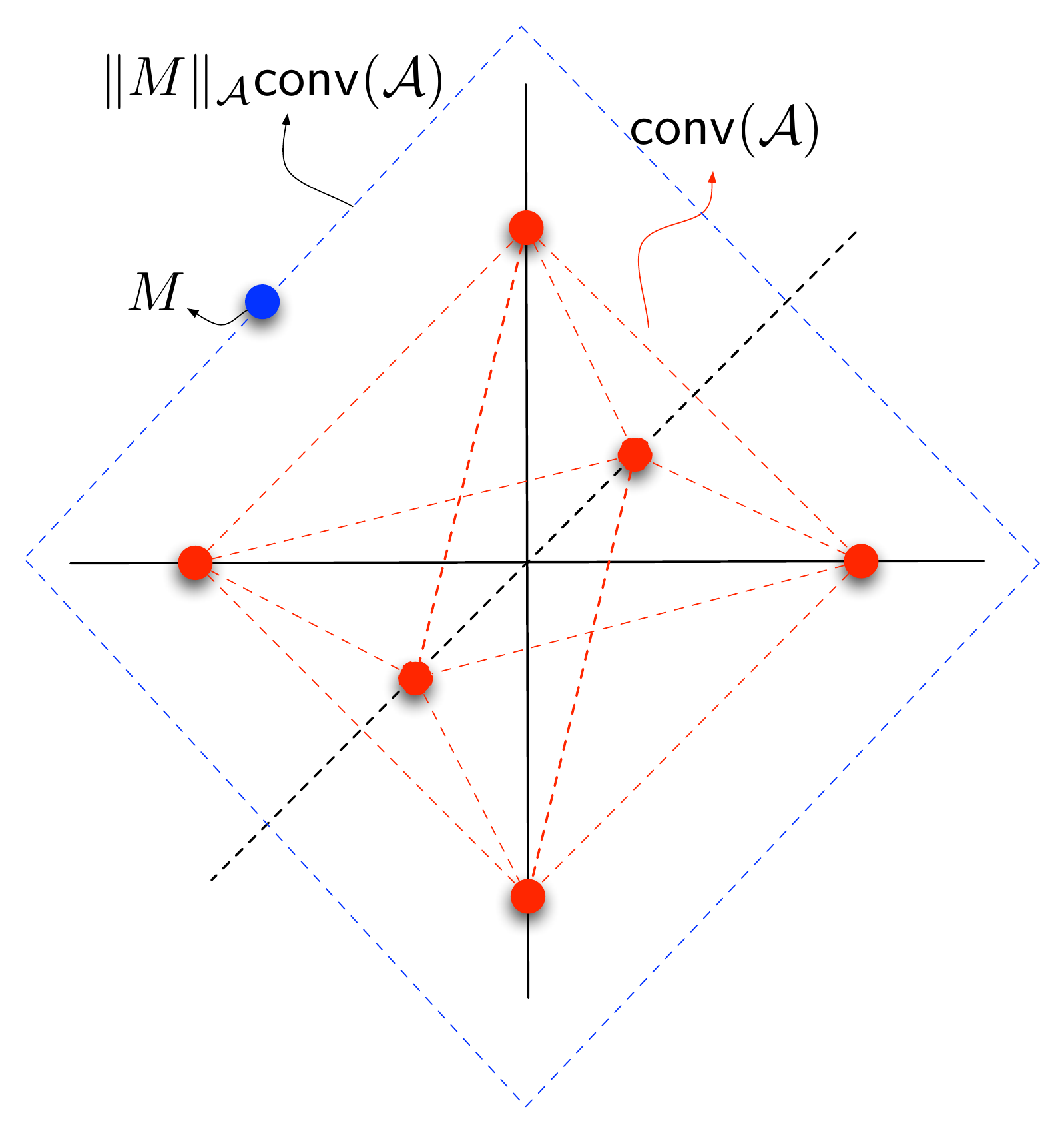}
    \caption{\small Atomic norm illustration. The red dashed line denotes the convex hull of atoms set. The blue dashed line denotes the scaled convex hull where $M$ lies in.}
    \label{fig:convex_hull}
\end{minipage}
\end{figure}

In convex geometry \citep{pisier1999volume}, the Minkowski functional (gauge) of a symmetric convex body $K$ is defined as 
\begin{align*}
\| x \|_K = \inf\{ t>0: x \in t K\}.
\end{align*}
Let $\mathcal{A}$ be a collection of atoms that is a compact subset of $\mathbb{R}^p$. We assume that  the elements of $\mathcal{A}$ are extreme points of the convex hull ${\sf conv}(\mathcal{A})$ (in the sense that for any $x \in \mathbb{R}^p$, $\sup \{ \langle x, a \rangle: a\in \mathcal{A}  \}  = \sup\{ \langle x,a \rangle : a \in {\sf conv}(\mathcal{A}) \}$). The atomic norm $\| x \|_{\mathcal{A}}$ for any $x \in \mathbb{R}^p$ is defined as the gauge of ${\sf conv}(\mathcal{A})$ (see Figure \ref{fig:convex_hull}):
\begin{align*}
\| x \|_{\mathcal{A}} = \inf \{ t> 0 : x \in t~ {\sf conv}(\mathcal{A})\}.
\end{align*}

As noted in \cite{chandrasekaran2012convex}, the atomic norm can also be written as
\begin{align}
\label{ATOM.Norm}
\| x \|_\mathcal{A} = \inf \left\{ \sum_{a \in \mathcal{A}} c_a : x = \sum_{a \in \mathcal{A}} c_a \cdot a, ~c_a \geq 0  \right\}.
\end{align}
The dual norm of this atomic norm is defined in the following way (since  the atoms in $\mathcal{A}$ are the extreme points of ${\sf conv}(\mathcal{A})$),
\begin{align}
\label{ATOM.Dual}
\| x \|_{\mathcal{A}}^* & = \sup \{ \langle x, a \rangle: a\in \mathcal{A}  \}  = \sup\{ \langle x,a \rangle : \| a \|_\mathcal{A} \leq 1 \}.
\end{align}
We have the following (``Cauchy-Schwarz'') symmetric relation for the norm and its dual 
\begin{align}
\label{Cauchy.Schwarz}
\langle x,y \rangle \leq \|x\|_{\mathcal{A}}^* \|y\|_{\mathcal{A}}.
\end{align}

\begin{figure}[pht]
\centering
\begin{minipage}{.45\textwidth}
  \centering
  \includegraphics[width=0.75\textwidth]{./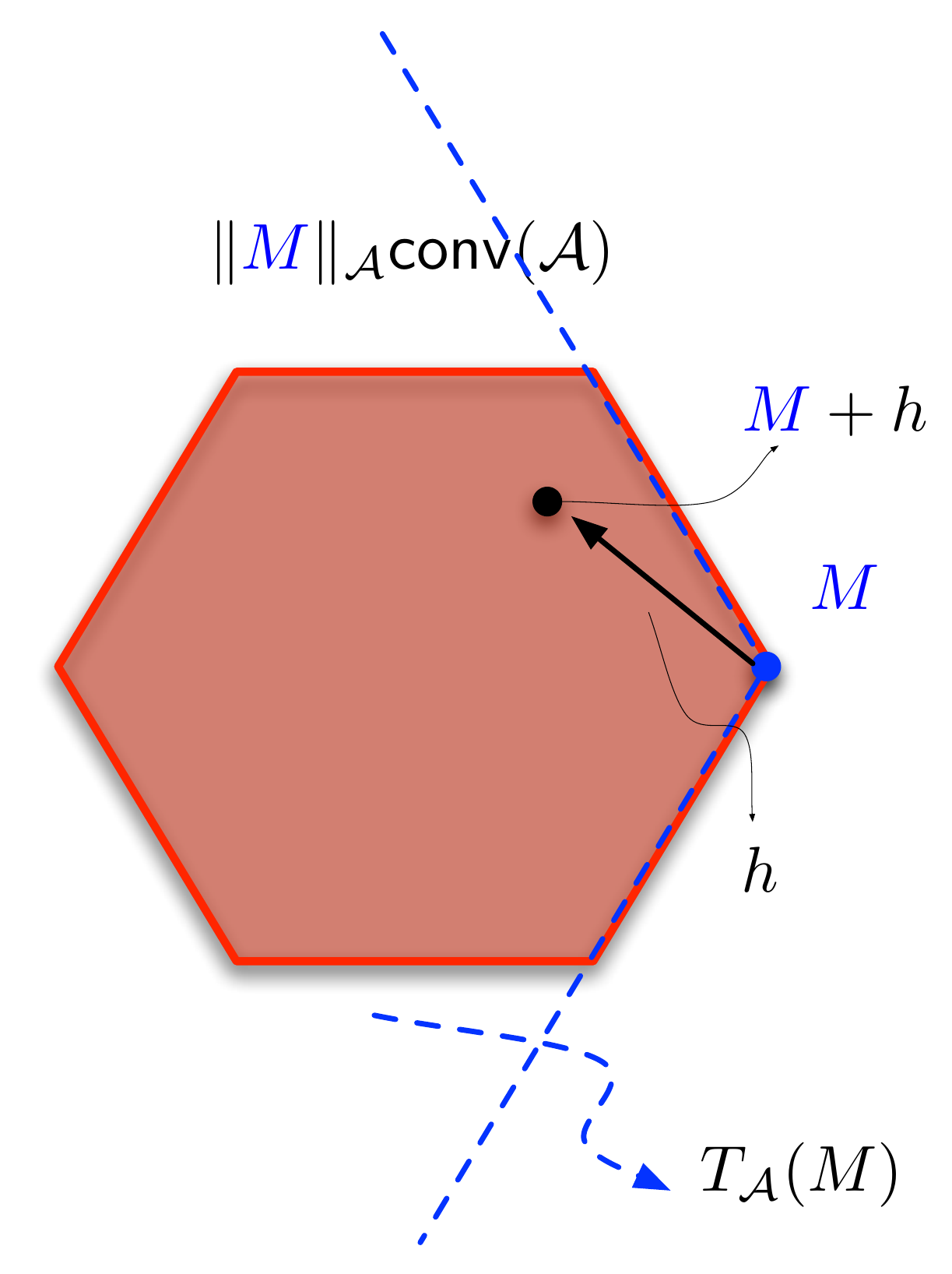}
  \caption{\small Tangent cone general illustration 2D. The red shaped area is the scaled convex hull of atom set. The blue dashed line forms the tangent cone at $M$. Black arrow denotes the possible directions inside the cone.}
  \label{fig:cone2d}
\end{minipage}%
\hspace{0.4cm}
\begin{minipage}{.45\textwidth}
  \centering
   \includegraphics[width=\textwidth]{./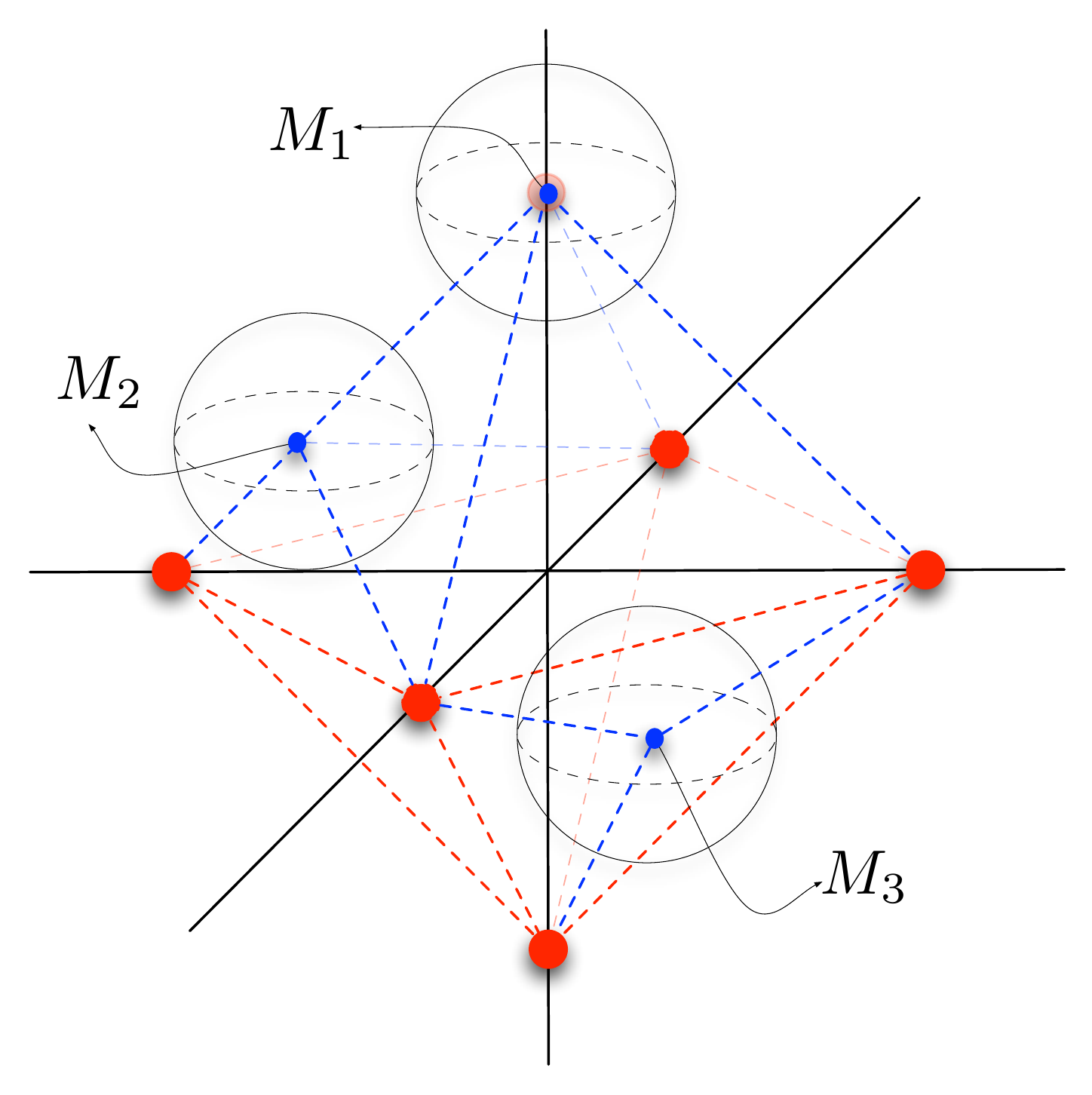}
  \caption{\small Tangent cone illustration 3D for sparse regression. For three possible locations $M_i, 1\leq i\leq3$, the tangent cone are different, with cones becoming more complex as $i$ increases. }
  \label{fig:cone3d}
\end{minipage}
\end{figure}

It is clear that the unit ball with respect to the atomic norm $\| \cdot \|_{\mathcal{A}}$ is the convex hull of the set of atoms $\mathcal{A}$. The {\bf tangent cone} at $x$ with respect to the scaled unit ball $\| x \|_{\mathcal{A}} ~ {\sf conv}(\mathcal{A})$ is defined to be (see Figures \ref{fig:cone2d} and \ref{fig:cone3d})
\begin{align}
\label{Tangent.Cone}
T_{\mathcal{A}} (x) = {\sf cone}\left\{h: \|x+h\|_{\mathcal{A}} \leq \| x\|_{\mathcal{A}}\right\}.
\end{align}
Also known as a recession cone, $T_{\mathcal{A}} (x)$ is the collection of directions where the atomic norm becomes smaller. This tangent cone $T_{\mathcal{A}} (x)$ determines the geometric property of the neighborhood around the true parameter $M$, and thus the complexity of this cone will affect the difficulty of the recovery problem. The cone is unbounded, but we can look at the cone intersected with the unit ball $B_2^p \cap T_{\mathcal{A}}(M)$ in analyzing the complexity of the cone. Figure \ref{fig:cone2d} provides an intuitive illustration where the red shaded area is the scaled atomic norm ball, $M$ is the true parameter, the black arrow denotes one vector inside the tangent cone, and the region enclosed by the blue dashed lines is the $T_{\mathcal{A}}(M)$.

In order to better illustrate the general model and notion of low complexity, it is helpful to look at the atom set, atomic norm and tangent cone geometry in a few examples.

\begin{example}{\rm
For sparse signal recovery in high-dimensional linear regression, the atom set consists of the unit basis vectors $\{ \pm e_i\}$, the atomic norm is the vector $\ell_1$ norm, and its dual norm is the vector $\ell_\infty$ norm. The convex hull ${\sf conv}(\mathcal{A})$ is called the cross-polytope. Figure \ref{fig:cone3d} illustrates this tangent cone for 3D $\ell_1$ norm ball for 3 different cases $T_{\mathcal{A}}(M_i),1\leq i\leq 3$. The  ``angle'' or ``complexity'' of the local tangent cone determines the difficulty of recovery. Most of the previous work showed that the algebraic characterization (sparsity) of the parameter space drives the global rate, and we are arguing that the geometric characterization through the local tangent cone provides an intuitive and refined local approach to high-dimensional linear inverse problem.
}
\end{example}

\begin{example}{\rm
In trace regression and matrix completion, the goal is to recover low rank matrices. In such settings,  the atom set consists of the rank one matrices (matrix manifold) $\mathcal{A} = \{ u v^*: \|u\|_{\ell_2} = 1, ~\|v\|_{\ell_2}=1 \}$ and the atomic norm is the nuclear norm and the dual norm is the spectral norm. The convex hull ${\sf conv}(\mathcal{A})$ is called the nuclear norm ball of matrices. The position of the true parameter on the scaled nuclear norm ball determines the geometry of the local tangent cone, thus affecting the estimation difficulty. 
}
\end{example}

\begin{example}{\rm
In integer programming, one would like to recover the sign vectors whose entries take on values $\pm 1$. The atom set is all sign vectors (cardinality $2^p$) and the convex hull ${\sf conv}(\mathcal{A})$ is the hypercube. Tangent cones for each parameter have the same structure in this case.
}
\end{example}

\begin{example}{\rm
In orthogonal matrix recovery, the matrix of interest is constrained to be orthogonal.  In this case, the atom set is all orthogonal matrices and the convex hull ${\sf conv}(\mathcal{A})$ is the spectral norm ball. Similar to sign vector recovery, the local tangent cones for each orthogonal matrix share similar geometric property.
}
\end{example}



\subsection{Gaussian Width, Sudakov Estimate, and Other Geometric Quantities}
\label{Geometric-Quantity.sec}

Our theoretical analysis relies on several key geometric quantities. We first introduce two complexity measures, the Gaussian width and Sudakov estimate. 

\begin{definition}[Gaussian Width]
For a compact set $K \in \mathbb{R}^p$, the Gaussian width is defined as
\begin{align}
\label{Gaussian.Width}
w(K) := \mathbb{E}_{g} \left[ \sup_{v \in K} \langle {g}, v \rangle \right].
\end{align}
where $g \sim N(0,I_{p})$ is the standard multivariate Gaussian vector.
\end{definition}

Gaussian width quantifies the probability that a randomly oriented subspace misses a convex subset. It was introduced in Gordon's analysis \citep{gordon1988milman}, and was shown recently to play a crucial rule in linear inverse problems in various noiseless or deterministic noise settings, see, for example,  \cite{amelunxen2013living}. Explicit upper bounds on the Gaussian width for different convex sets have been given in \cite{chandrasekaran2012convex,amelunxen2013living}.  For example, if $M \in \mathbb{R}^p$ is a $s-$sparse vector, $w(B_2^p \cap T_{\mathcal{A}}(M)) \lesssim \sqrt{s \log p/s}$. When $M \in \mathbb{R}^{p \times q}$ is a rank-$r$ matrix, $w(B_2^p \cap T_{\mathcal{A}}(M)) \lesssim \sqrt{r(p+q-r)}$. For sign vector in $\mathbb{R}^p$, $w(B_2^p \cap T_{\mathcal{A}}(M)) \lesssim \sqrt{p}$, while for orthogonal matrix in $\mathbb{R}^{m \times m}$, $w(B_2^p \cap T_{\mathcal{A}}(M)) \lesssim \sqrt{m(m-1)}$. See  Section 3.4 propositions 3.10-3.14 in \cite{chandrasekaran2012convex} for detailed calculations.
The Gaussian width as a complexity measure of the local tangent cone will be used in the upper bound analysis in Sections \ref{GE.Thy} and \ref{Gen.Thy}.

\begin{definition}[Sudakov Minoration Estimate]
The Sudakov estimate of a compact set $K \in \mathbb{R}^p$ is defined as
\begin{align}
\label{Covering.Entropy}
e(K) & := \sup_{\epsilon}~ \epsilon \sqrt{\log \mathcal{N}(K, \epsilon)} .
\end{align} 
where $\mathcal{N}(K,\epsilon)$ denotes the $\epsilon-$covering number of set $K$ with respect to the Euclidean norm. 
\end{definition}
Sudakov estimate has been widely known in the literature to capture the complexity of a general functional class \citep{yang1999information}. Through balancing the cardinality of the covering set at scale $\epsilon$ and the covering radius $\epsilon$, Sudakov estimate defines the best radius $\epsilon$ that maximizes $$\epsilon \sqrt{\log \mathcal{N}(B_2^p \cap T_{\mathcal{A}}(M),\epsilon)},$$ thus determines the complexity of the set $T_{\mathcal{A}}(M),\epsilon)$. Sudakov estimate as a complexity measure of the local tangent cone is useful for the minimax lower bound analysis.


 
\begin{figure}[pht]
\centering
\begin{minipage}{.4\textwidth}
  \centering
  \includegraphics[width=0.8\textwidth]{./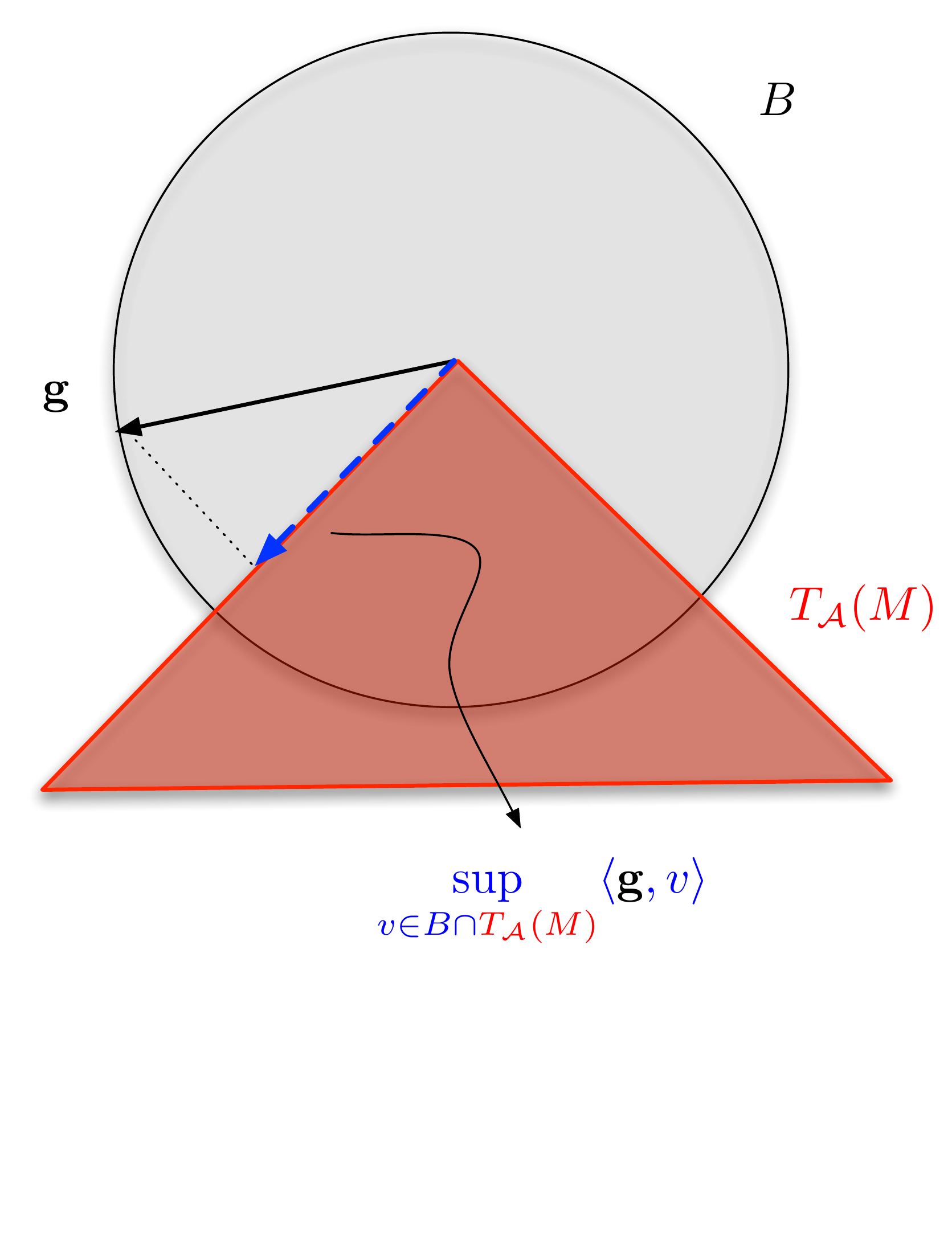}
  \vspace{-40pt}
  \caption{\small Gaussian width.}
  \label{fig:gaussian_width}
\end{minipage}
\hspace{0.3cm}
\begin{minipage}{.41\textwidth}
  \centering
   \includegraphics[width=0.8\textwidth]{./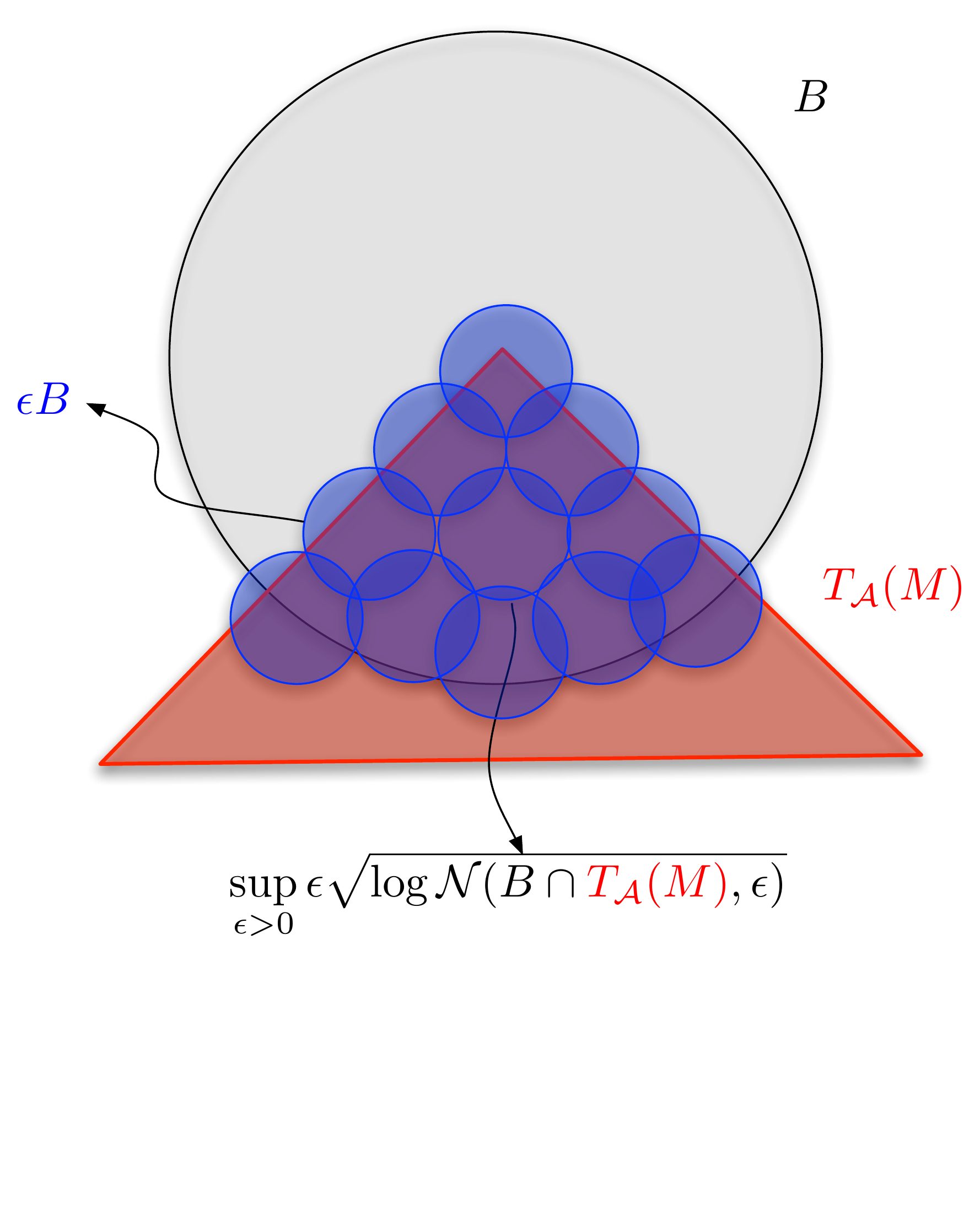}
     \vspace{-40pt}
  \caption{\small Sudakov estimate.}
  \label{fig:covering_width}
\end{minipage}
\end{figure}

The following Sudakov minoration and Dudley entropy integral \citep{dudley1967sizes,ledoux1991probability} show how the Gaussian width $w(\cdot)$ and Sudakov estimate $e(\cdot)$, both geometric quantities, are related to each other. 
\begin{lemma}[Sudakov Minoration and Dudley Entropy Integral]
\label{Sudakov.Lemma}
For any compact subset $K \subseteq \mathbb{R}^p$, there exist a universal constant $c>0$ such that
\begin{align}
c \cdot e(K) \leq w(K) \leq 24 \int_0^\infty  \sqrt{\log \mathcal{N}(K,\epsilon)} d\epsilon.
\end{align}
\end{lemma}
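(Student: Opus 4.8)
The plan is to prove the two inequalities separately, treating $w(K)$ as the supremum of the canonical centered Gaussian process $X_v = \langle g,v\rangle$, $v\in K$, whose increments satisfy $\mathbb{E}(X_u-X_v)^2 = \|u-v\|_{\ell_2}^2$. The left inequality is Sudakov minoration; the right is Dudley's entropy integral bound.

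\textbf{Lower bound $c\,e(K)\le w(K)$.} Fix $\epsilon>0$; since $e(K)$ is a supremum over $\epsilon$, it suffices to show $\epsilon\sqrt{\log\mathcal{N}(K,\epsilon)}\lesssim w(K)$. First extract a maximal $\epsilon$-separated subset $\{v_1,\dots,v_m\}\subseteq K$; by maximality it is also an $\epsilon$-cover, so $m\ge\mathcal{N}(K,\epsilon)$, and $\mathbb{E}(X_{v_i}-X_{v_j})^2=\|v_i-v_j\|_{\ell_2}^2\ge\epsilon^2$ for $i\neq j$. Introduce the auxiliary centered Gaussian family $Y_i=(\epsilon/\sqrt2)\,\xi_i$ with $\xi_i$ i.i.d.\ standard normal, so $\mathbb{E}(Y_i-Y_j)^2=\epsilon^2\le\mathbb{E}(X_{v_i}-X_{v_j})^2$. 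Apply the Sudakov--Fernique comparison inequality to get $\mathbb{E}\max_i X_{v_i}\ge\mathbb{E}\max_i Y_i=(\epsilon/\sqrt2)\,\mathbb{E}\max_{i\le m}\xi_i\gtrsim\epsilon\sqrt{\log m}$, using the elementary bound $\mathbb{E}\max_{i\le m}\xi_i\gtrsim\sqrt{\log m}$ for $m\ge2$. Since $\{v_i\}\subseteq K$, $w(K)\ge\mathbb{E}\max_i X_{v_i}\gtrsim\epsilon\sqrt{\log\mathcal{N}(K,\epsilon)}$; taking the supremum over $\epsilon$ finishes this direction.

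\textbf{Upper bound via chaining.} We may assume $K$ is not a singleton (else both sides vanish). Fix a base point $v_0\in K$; since $\mathbb{E}\langle g,v_0\rangle=0$, $w(K)=\mathbb{E}\sup_{v\in K}\langle g,v-v_0\rangle$. Set $\Delta=\mathrm{diam}(K)$ and, for each integer $j\ge0$, let $T_j$ be a minimal $2^{-j}\Delta$-net of $K$ with $T_0=\{v_0\}$, so $|T_j|=\mathcal{N}(K,2^{-j}\Delta)$; let $\pi_j(v)$ be a nearest point of $T_j$ to $v$. Then $\|\pi_j(v)-\pi_{j-1}(v)\|_{\ell_2}\le 3\cdot2^{-j}\Delta$ and, by compactness, $v-v_0=\sum_{j\ge1}(\pi_j(v)-\pi_{j-1}(v))$, whence $w(K)\le\sum_{j\ge1}\mathbb{E}\sup_{v\in K}\langle g,\pi_j(v)-\pi_{j-1}(v)\rangle$. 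The $j$-th term is the expected maximum of at most $|T_j|^2$ centered Gaussians of variance $\le(3\cdot2^{-j}\Delta)^2$, hence $\le C\,2^{-j}\Delta\sqrt{\log\mathcal{N}(K,2^{-j}\Delta)}$. Since $\epsilon\mapsto\sqrt{\log\mathcal{N}(K,\epsilon)}$ is nonincreasing, $2^{-j}\Delta\sqrt{\log\mathcal{N}(K,2^{-j}\Delta)}\le 2\int_{2^{-(j+1)}\Delta}^{2^{-j}\Delta}\sqrt{\log\mathcal{N}(K,\epsilon)}\,d\epsilon$; summing the telescoping intervals gives $w(K)\le 2C\int_0^\infty\sqrt{\log\mathcal{N}(K,\epsilon)}\,d\epsilon$, and tracking the absolute constants yields the stated factor $24$.

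The two genuine inputs are the Sudakov--Fernique comparison inequality and the union bound for maxima of finitely many Gaussians; both are standard. I expect the only fiddly part to be the bookkeeping of absolute constants in the chaining sum needed to land exactly on the factor $24$, along with the elementary estimate $\mathbb{E}\max_{i\le m}\xi_i\asymp\sqrt{\log m}$. In the write-up I would first record these two facts as cited lemmas and then execute the two displays above.
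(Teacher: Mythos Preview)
Your proposal is correct and follows the standard route (Sudakov--Fernique for the lower bound, dyadic chaining for the upper bound). The paper does not actually supply a proof of this lemma; it is quoted as a classical result with citations to \cite{dudley1967sizes,ledoux1991probability}, so there is no ``paper's own proof'' to compare against beyond noting that your argument is exactly the textbook one those references contain.
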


In the literature, another complexity measure, volume ratio has also been used to characterize the minimax lower bounds \citep{ma2013volume}. Volume ratio has been studied in \cite{pisier1999volume} and \cite{vershynin2011lectures}. For a convex set $K \in \mathbb{R}^p$, volume ratio used in the present paper is defined as follows.
\begin{definition}[Volume Ratio]
The volume ratio is defined as
\begin{align}
\label{Volume.Ratio}
v(K) &: = \sqrt{p} \left( \frac{{\sf vol}(K)}{ {\sf vol} (B_2^p)} \right)^{\frac{1}{p}}
\end{align}
\end{definition}
 

The following Urysohn's inequality, which is proved through Brunn-Minkowski Theorem, links the Gaussian width $w(\cdot)$ with the volume ratio $v(\cdot)$. 
\begin{lemma}[Urysohn's Inequality]
\label{Urysohn.Lemma}
Let $K$ be a compact subset of $\mathbb{R}^p$. Then
\begin{align*}
v(K) \leq w(K)
\end{align*}
with the equality achieved if and only if $K$ is the $\ell_2$ ball $B_2^p$.
\end{lemma}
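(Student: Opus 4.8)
The plan is to reduce the statement to the classical Urysohn inequality of convex geometry, whose standard proof runs through Brunn--Minkowski, and to pass between the Gaussian width $w(\cdot)$ and the spherical mean width via the rotational invariance of the standard Gaussian. Throughout I use that the support function $h_K(\theta)=\sup_{v\in K}\langle\theta,v\rangle$ is unchanged on passing to ${\sf conv}(K)$, so $w(K)=w({\sf conv}(K))$, while ${\sf vol}(K)\le {\sf vol}({\sf conv}(K))$; hence it suffices to treat a convex compact $K$, and if $K$ is lower-dimensional the left-hand side vanishes and there is nothing to prove, so I may take $K$ to be a convex body.

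\textbf{Gaussian width as mean width.} Writing $g=\|g\|\,\theta$ with $\theta$ uniform on $S^{p-1}$ and independent of $\|g\|$ (rotational invariance of $N(0,I_p)$), one gets $w(K)=\mathbb{E}\|g\|\cdot\int_{S^{p-1}}h_K(\theta)\,d\sigma(\theta)$, where $\sigma$ is the uniform probability measure on $S^{p-1}$ and $\mathbb{E}\|g\|=\sqrt{2}\,\Gamma(\tfrac{p+1}{2})/\Gamma(\tfrac p2)\in[\sqrt{p-1},\sqrt{p}]$. Thus $w(K)=\mathbb{E}\|g\|\cdot r$ with $r:=\int_{S^{p-1}}h_K\,d\sigma$ the mean width, and it remains to show the classical Urysohn bound $r\ge ({\sf vol}(K)/{\sf vol}(B_2^p))^{1/p}$.

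\textbf{Minkowski symmetrization and Brunn--Minkowski.} Consider the rotational mean body $\bar K:=\int_{SO(p)}\rho K\,d\rho$ (Haar measure), i.e.\ the body whose support function is $h_{\bar K}(u)=\int_{SO(p)}h_K(\rho^{-1}u)\,d\rho=\int_{S^{p-1}}h_K\,d\sigma=r$, a constant, so that $\bar K=rB_2^p$. Because the functional $L\mapsto{\sf vol}(L)^{1/p}$ is concave under Minkowski averaging --- which is exactly the Brunn--Minkowski theorem --- Jensen's inequality applied to the Haar average gives ${\sf vol}(\bar K)^{1/p}\ge\int_{SO(p)}{\sf vol}(\rho K)^{1/p}\,d\rho={\sf vol}(K)^{1/p}$. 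Since ${\sf vol}(\bar K)=r^{\,p}{\sf vol}(B_2^p)$, this is precisely $r\ge({\sf vol}(K)/{\sf vol}(B_2^p))^{1/p}$. Combining with the previous paragraph, $w(K)=\mathbb{E}\|g\|\cdot r\ge \mathbb{E}\|g\|\,({\sf vol}(K)/{\sf vol}(B_2^p))^{1/p}=\tfrac{\mathbb{E}\|g\|}{\sqrt p}\,v(K)$, which yields $v(K)\le w(K)$ (exactly so with the mean-width normalization of the volume ratio, and up to the factor $\mathbb{E}\|g\|/\sqrt p=1-O(1/p)$ with the stated normalization).

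\textbf{Equality and the main obstacle.} Equality in $v(K)\le w(K)$ propagates back through Jensen and forces equality in Brunn--Minkowski for the rotated bodies $\rho K$, whose equality case is that these bodies are all homothetic; since their mean body $\bar K$ is a Euclidean ball, $K$ must itself be a Euclidean ball, and conversely any Euclidean ball achieves equality because $v$ and $w$ both scale linearly in its radius. The one genuinely delicate point is the Minkowski symmetrization step: making precise the integral-Minkowski average $\bar K$ over the compact group $SO(p)$ --- its well-definedness and the support-function identity $h_{\bar K}=\int_{S^{p-1}} h_K\,d\sigma$ --- and invoking Brunn--Minkowski in its integral/Jensen form together with its equality characterization. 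Everything else is routine bookkeeping; the only other thing worth watching is the exact constant $\mathbb{E}\|g\|$ versus $\sqrt p$ in the normalization of the volume ratio, a $1+O(1/p)$ discrepancy.
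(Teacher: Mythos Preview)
Your approach is correct and aligns with what the paper indicates: the paper does not actually supply a proof of this lemma, it merely states it and remarks in the surrounding text that it ``is proved through Brunn--Minkowski Theorem.'' Your Minkowski-symmetrization argument over $SO(p)$ followed by the Brunn--Minkowski/Jensen step is precisely the classical route, so you are in full agreement with the paper's (unstated) intended proof.

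One point you flag is genuinely more careful than the paper itself. With the paper's exact normalizations $v(K)=\sqrt{p}\,({\sf vol}(K)/{\sf vol}(B_2^p))^{1/p}$ and $w(K)=\mathbb{E}_g\sup_{v\in K}\langle g,v\rangle$, equality at $K=B_2^p$ gives $v(B_2^p)=\sqrt{p}$ but $w(B_2^p)=\mathbb{E}\|g\|=\sqrt{2}\,\Gamma(\tfrac{p+1}{2})/\Gamma(\tfrac{p}{2})<\sqrt{p}$, so the inequality $v(K)\le w(K)$ as literally stated is off by the factor $\mathbb{E}\|g\|/\sqrt{p}=1-O(1/p)$, exactly as you note. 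The paper glosses over this; for its purposes the constant is irrelevant, but your observation is accurate and your proof delivers the sharp form $v(K)\le \tfrac{\sqrt{p}}{\mathbb{E}\|g\|}\,w(K)$.
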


The recovery difficulty of the linear inverse problem also depends on other geometric quantities defined on the local tangent cone $T_{\mathcal{A}}(M)$: the local isometry constants $\phi_{\mathcal{A}}(M,\mathcal{X})$ and $ \psi_{\mathcal{A}}(M,\mathcal{X})$ and the local asphericity ratio $\gamma_{\mathcal{A}}(M)$.
The {\bf local isometry constants} are defined for the local tangent cone at the true parameter $M$ as 
\begin{align}
\label{Iso.Const}
& \phi_{\mathcal{A}}(M,\mathcal{X}):= \inf\left\{ \frac{\|\mathcal{X} (h)\|_{\ell_2}}{ \| h \|_{\ell_2}} : h\in T_{\mathcal{A}}(M),h\neq 0 \right\}  \\
& \psi_{\mathcal{A}}(M,\mathcal{X}):= \sup\left\{ \frac{\|\mathcal{X} (h)\|_{\ell_2}}{ \| h \|_{\ell_2}} : h\in T_{\mathcal{A}}(M),h\neq 0 \right\}.
\end{align}
The local isometry constants measure how well the linear operator preserves the $\ell_2$ norm within the local tangent cone. Intuitively, the larger the $\psi$ or the smaller the $\phi$ is, the harder the recovery is.  
We will see later that the local isometry constants are determined by the Gaussian width under the Gaussian ensemble design. 


The {\bf local asphericity ratio} is defined as
\begin{align}
\label{Asphere.Ratio}
\gamma_{\mathcal{A}}(M):= \sup \left\{ \frac{\| h \|_{\mathcal{A}}}{\| h\|_{\ell_2}} :  h\in  T_{\mathcal{A}}(M),h\neq 0 \right\},
\end{align}
which measures how extreme the atomic norm is relative to the $\ell_2$ norm within the local tangent cone. 


\subsection{Point Estimation via Convex Relaxation}

We now return to the linear inverse model \eqref{GLI.Model} in the high-dimensional setting.
Suppose we observe $(\mathcal{X}, \; Y)$ as in \eqref{GLI.Model} where the parameter of interest $M$ is assumed to  have low complexity with respect to a given atom set $\mathcal{A}$. The low complexity of $M$ introduces a non-convex constraint, which leads to serious computational difficulties if solved directly. Convex relaxation is an effective and natural approach in such a setting. We propose a generic convex constrained minimization procedure induced by the atomic norm and the corresponding dual norm to estimate $M$:
\begin{align}
\label{CST.Min}
\hat{M} =  \argmin_{M} \left\{  \| M \|_{\mathcal{A}} : \; \| \mathcal{X}^*(Y - \mathcal{X} (M) ) \|_{\mathcal{A}}^* \leq \lambda \right\}
\end{align}
where $\lambda$ is a tuning parameter (localization radius) that depends on the sample size, noise level, and geometry of the atom set $\mathcal{A}$. An explicit formula for $\lambda$ is given in \eqref{Tun.Param} in the case of Gaussian noise. Intuitively, the atomic norm minimization \eqref{CST.Min} is a convex relaxation to the low complexity structure and $\lambda$ specifies the localization scale given the noise distribution. This generic convex program utilizes the duality and recovers the low complexity structure adaptively.  The Dantzig selector for high-dimensional sparse regression \citep{candes2007dantzig} and the constrained nuclear norm minimization \cite{candes2011tight} for trace regression are particular examples of \eqref{CST.Min}. The properties of the estimator $\hat M$ will be investigated in Sections \ref{GE.Thy} and \ref{Gen.Thy}.

\subsection{Statistical Inference via Feasibility of Convex Program}

In the high-dimensional setting, $p$-values as well as confidence intervals are important inferential questions beyond point estimation. In this section we will show how to perform statistical inference for the linear inverse model \eqref{GLI.Model}. Let $M \in \mathbb{R}^p$ be the vectorized parameter of interest, and $\{e_i, 1\leq i \leq p\}$ are the corresponding basis vectors. Consider the following convex feasibility problem for matrix $\Omega \in \mathbb{R}^{p \times p}$, where each row $\Omega_{i \cdot}$ satisfies
\begin{align}
	\label{FES.Con}
	\| \mathcal{X}^* \mathcal{X} \Omega_{i\cdot}^* - e_i \|_{\mathcal{A}}^* \leq \eta, ~~\forall 1\leq i\leq p
\end{align}
where $\eta$ is some tuning parameter that depends on the sample size and geometry of the atom set $\mathcal{A}$. One can also solve a stronger version of the above convex program for $\eta\in \mathbb{R}, \Omega \in \mathbb{R}^{p \times p}$ simultaneously 
\begin{align}
\label{FES2.Con}
	(\Omega, \eta_n) =  \argmin_{\Omega,\eta} \left\{ \eta: \;  \| \mathcal{X}^* \mathcal{X} \Omega_{i\cdot}^* - e_i \|_{\mathcal{A}}^* \leq \eta,  ~~\forall  1\leq i\leq p \right\}.
\end{align}


Built upon the constrained minimization estimator $\hat{M}$ in \eqref{CST.Min} and feasible matrix $\Omega$ in \eqref{FES2.Con}, the de-biased estimator for inference on parameter $M$ is defined as
\begin{align}
	\label{DB.Est}
	\tilde{M} := \hat{M} + \Omega \mathcal{X}^* (Y - \mathcal{X}(\hat{M})).
\end{align}
We will establish the asymptotic normality for finite linear contrast $\langle v, M \rangle$, where $v \in \mathbb{R}^p, \| v \|_{\ell_2} = 1, \| v \|_{\ell_0} \leq k$, $k$ does not grow with $n,p$, and construct confidence intervals and hypothesis tests based on the asymptotic normality result. In the case of high-dimensional linear regression,  de-biased estimators has been investigated in \cite{buhlmann2013statistical,zhang2014confidence,van2014asymptotically,javanmard2014confidence}. The convex feasibility program we proposed here can be viewed as a unified treatment for general linear inverse models. We will show that under some conditions on the sample size and the local tangent cone, 
asymptotic confidence intervals and hypothesis tests are valid for finite linear contrast $\langle v, M \rangle$ which include as a special case the individual coordinates of $M$.

\section{Local Geometric Theory: Gaussian Setting}
\label{GE.Thy}

We establish in this section a general theory of geometric inference for the linear inverse problem under the Gaussian setting where the noise vector $Z$ is Gaussian and the linear operator $\mathcal{X}$ is the Gaussian ensemble design in the following sense.
\begin{definition}[Gaussian Ensemble Design]
Let $\mathcal{X} \in \mathbb{R}^{n \times p}$ overload the matrix form of the linear operator $\mathcal{X}: \mathbb{R}^p \rightarrow \mathbb{R}^n$. $\mathcal{X}$ is Gaussian ensemble if each element is i.i.d Gaussian random variable with mean $0$ and variance $\frac{1}{n}$.
\end{definition}
Our analysis is quite different from the case by case global analysis of the Dantzig selector, Lasso and nuclear norm minimization. We show a stronger result which adapts to the local tangent cone geometry. All the analyses in our theory are non-asymptotic, and the constants are explicit. Another advantage is that the local analysis yields robustness for a given parameter (with near but not exact low complexity), as the convergence rate is captured by the geometry of the associated local tangent cone at a given $M$.
Later in Section \ref{Gen.Thy} we will show how to extend the theory to a more general setting. Without loss of generality, we assume in our analysis that the atom set $\mathcal{A}$ is scaled so that $\sup_{v \in \mathcal{A}} \|v \|_{\ell_2} = 1$. That is, the atom set $\mathcal{A}$ is embedded into the unit Euclidean ball.

\subsection{Local Geometric Upper Bound}
\label{GE.Upp.Bd}

For the upper bound analysis, we need to choose a suitable localization radius $\lambda$ (in the convex program \eqref{CST.Min}) to guarantee that the true parameter $M$ is in the feasible set with high probability. The {\bf tuning parameter}, under the Gaussian noise assumption, is chosen as
\begin{align}
\label{Tun.Param}
\lambda_{\mathcal{A}}(\mathcal{X},\sigma,n) = \frac{\sigma}{\sqrt{n}} \left\{w(\mathcal{XA})+ \delta \cdot \sup_{v\in \mathcal{A}} \|\mathcal{X} v \|_{\ell_2}  \right\} \asymp \frac{\sigma}{\sqrt{n}} w(\mathcal{XA})
\end{align}
where  $\mathcal{XA}$ is the image of the atom set under the linear operator $\mathcal{X}$, and $\delta>0$ can be chosen arbitrarily according to the probability of success we would like to attain ($\delta$ is commonly chosen at order $\sqrt{\log p}$).  $\lambda_{\mathcal{A}}(\mathcal{X},\sigma,n)$ is a global parameter that depends on the linear operator $\mathcal{X}$ and the atom set $\mathcal{A}$, but, importantly, not on the complexity of $M$.

The following theorem geometrizes the local rate of convergence in the Gaussian case. 
\begin{theorem}[Gaussian Ensemble: Convergence Rate]
\label{GE.Thm}
Suppose we observe $(\mathcal{X}, \; Y)$ as in \eqref{GLI.Model} with the Gaussian ensemble design and $Z\sim N(0, \frac{\sigma^2}{n} I_n)$. Let $\hat M$ be the solution of \eqref{CST.Min} with $\lambda$ chosen as in \eqref{Tun.Param}.
Let $0<c<1$ be a constant. For any $\delta > 0$, if
\begin{align*}
n \geq \frac{4[w(B_2^p \cap T_\mathcal{A}(M))+\delta]^2}{c^2} \vee \frac{1}{c},
\end{align*}
then  with probability at least $1- 3\exp(-\delta^2/2)$,
\begin{align*}
\| \hat{M} - M \|_{\ell_2} \leq  \frac{2\sigma}{(1-c)^2} \cdot    \frac{ \gamma_{\mathcal{A}}(M) w(\mathcal{XA}) }{\sqrt{n}},\\
\| \hat{M} - M \|_{\mathcal{A}} \leq \frac{2 \sigma}{(1-c)^2} \cdot \frac{ \gamma^2_{\mathcal{A}}(M) w(\mathcal{XA}) }{\sqrt{n}},\\
\| \mathcal{X}(\hat{M} - M) \|_{\ell_2} \leq \frac{2\sigma}{(1-c)} \cdot    \frac{ \gamma_{\mathcal{A}}(M) w(\mathcal{XA}) }{\sqrt{n}}.
\end{align*}
\end{theorem}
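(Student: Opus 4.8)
The plan is to derive all three bounds from a single "basic inequality" obtained by comparing $\hat M$ to the true parameter $M$, together with a high-probability control of the relevant local isometry constant via the Gaussian width of the tangent cone. First, I would show that with the choice of $\lambda$ in \eqref{Tun.Param}, the true parameter $M$ is feasible for \eqref{CST.Min} with probability at least $1-\exp(-\delta^2/2)$: indeed $\mathcal{X}^*(Y-\mathcal{X}(M)) = \mathcal{X}^*Z$, and $\|\mathcal{X}^*Z\|_{\mathcal{A}}^* = \sup_{a\in\mathcal{A}}\langle Z, \mathcal{X}a\rangle$ is the supremum of a Gaussian process over $\mathcal{XA}$ with sub-Gaussian increments, so a standard concentration bound (Borell-TIS, or a Gaussian comparison) gives $\|\mathcal{X}^*Z\|_{\mathcal{A}}^* \le \tfrac{\sigma}{\sqrt n}\{w(\mathcal{XA}) + \delta\sup_{v\in\mathcal{A}}\|\mathcal{X}v\|_{\ell_2}\} = \lambda$ with the stated probability.

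Second, on the feasibility event, optimality of $\hat M$ gives $\|\hat M\|_{\mathcal{A}}\le\|M\|_{\mathcal{A}}$, so the error $h := \hat M - M$ lies in the tangent cone $T_{\mathcal{A}}(M)$. Both $M$ and $\hat M$ are feasible, so by the triangle inequality $\|\mathcal{X}^*\mathcal{X}(h)\|_{\mathcal{A}}^* \le 2\lambda$. Then I would pair this with $h$ itself: by Cauchy–Schwarz \eqref{Cauchy.Schwarz},
\begin{align*}
\|\mathcal{X}(h)\|_{\ell_2}^2 = \langle \mathcal{X}^*\mathcal{X}(h), h\rangle \le \|\mathcal{X}^*\mathcal{X}(h)\|_{\mathcal{A}}^*\,\|h\|_{\mathcal{A}} \le 2\lambda\,\|h\|_{\mathcal{A}} \le 2\lambda\,\gamma_{\mathcal{A}}(M)\,\|h\|_{\ell_2},
\end{align*}
using the definition \eqref{Asphere.Ratio} of the asphericity ratio in the last step.

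Third, I would lower-bound $\|\mathcal{X}(h)\|_{\ell_2}$ by $\phi_{\mathcal{A}}(M,\mathcal{X})\|h\|_{\ell_2}$ from \eqref{Iso.Const}. The key probabilistic input here is Gordon's escape-through-the-mesh / Gaussian min-max theorem: for the Gaussian ensemble, with probability at least $1-\exp(-\delta^2/2)$, every $h\in T_{\mathcal{A}}(M)$ satisfies $\|\mathcal{X}(h)\|_{\ell_2}\ge (1 - \tfrac{w(B_2^p\cap T_{\mathcal{A}}(M)) + \delta}{\sqrt n})\|h\|_{\ell_2}$, and the sample-size hypothesis $n\ge 4[w(B_2^p\cap T_{\mathcal{A}}(M))+\delta]^2/c^2$ makes the parenthesis at least $1-c$, i.e. $\phi_{\mathcal{A}}(M,\mathcal{X})\ge 1-c$. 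Combining with the chain above on the intersection of the two events (probability at least $1 - 2\exp(-\delta^2/2)$, or $1-3\exp(-\delta^2/2)$ if one also needs an auxiliary concentration for $\sup_{v\in\mathcal{A}}\|\mathcal{X}v\|_{\ell_2}$ to replace it by $w(\mathcal{XA})$ in the asymptotic equivalence) yields $(1-c)^2\|h\|_{\ell_2}^2 \le \|\mathcal{X}(h)\|_{\ell_2}^2 \le 2\lambda\gamma_{\mathcal{A}}(M)\|h\|_{\ell_2}$, hence $\|h\|_{\ell_2}\le 2\lambda\gamma_{\mathcal{A}}(M)/(1-c)^2$. Substituting $\lambda\asymp\tfrac{\sigma}{\sqrt n}w(\mathcal{XA})$ gives the first bound; the $\ell_2$ bound together with $\|h\|_{\mathcal{A}}\le\gamma_{\mathcal{A}}(M)\|h\|_{\ell_2}$ gives the second; and $\|\mathcal{X}(h)\|_{\ell_2}^2\le 2\lambda\gamma_{\mathcal{A}}(M)\|h\|_{\ell_2}\le 2\lambda\gamma_{\mathcal{A}}(M)\cdot\tfrac{2\lambda\gamma_{\mathcal{A}}(M)}{(1-c)^2}$, i.e. $\|\mathcal{X}(h)\|_{\ell_2}\le \tfrac{2\lambda\gamma_{\mathcal{A}}(M)}{1-c}$, gives the third.

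The main obstacle is the uniform lower isometry bound over the cone: controlling $\phi_{\mathcal{A}}(M,\mathcal{X})$ requires a Gaussian process argument (Gordon's inequality applied to $\inf_{h\in T_{\mathcal{A}}(M)\cap B_2^p}\|\mathcal{X}h\|_{\ell_2}$) rather than a pointwise bound, and getting the constant exactly $1-c$ with the clean failure probability $\exp(-\delta^2/2)$ needs careful bookkeeping of the concentration of the Gaussian min-max functional around its mean. The feasibility step and the deterministic algebra are comparatively routine; the bulk of the work is packaging the escape-through-the-mesh estimate with explicit constants, which I expect is handled by a technical lemma in the appendix.
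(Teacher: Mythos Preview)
Your proposal is correct and follows essentially the same route as the paper: the deterministic ``basic inequality'' chain $\phi_{\mathcal{A}}^2(M,\mathcal{X})\|h\|_{\ell_2}^2 \le \|\mathcal{X}(h)\|_{\ell_2}^2 \le 2\lambda\,\gamma_{\mathcal{A}}(M)\|h\|_{\ell_2}$ is exactly the content of the general Theorem~\ref{GLC.Thm}, the feasibility of $M$ via Gaussian concentration of $\sup_{a\in\mathcal{A}}\langle Z,\mathcal{X}a\rangle$ is Lemma~\ref{TP.Lemma}, and the lower isometry bound $\phi_{\mathcal{A}}(M,\mathcal{X})\ge 1-c$ via Gordon's comparison is Lemma~\ref{LIC.Lemma}. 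The only bookkeeping nuance is the source of the third $\exp(-\delta^2/2)$: in the paper it comes from invoking both the lower \emph{and} upper isometry bounds of Lemma~\ref{LIC.Lemma} (the upper bound $\psi_{\mathcal{A}}\le 1+c$ is used to control $\sup_{v\in\mathcal{A}}\|\mathcal{X}v\|_{\ell_2}$ in $\lambda$, via the inclusion ${\sf conv}(\mathcal{A})\subset 2B_2^p\cap T_{\mathcal{A}}(M)$), rather than from a separate auxiliary concentration as you suggest; but this is a cosmetic difference and your instinct about why a third event is needed is correct.
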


Theorem \ref{GE.Thm} gives bounds for the estimation error under both the $\ell_2$ norm loss and the atomic norm loss as well as for the in sample prediction error. The upper bounds are determined by the geometric quantities $w(\mathcal{XA}), \gamma_{\mathcal{A}}(M)$ and $w(B_2^p \cap T_{\mathcal{A}}(M))$. Take for example the estimation error under the $\ell_2$ loss. 
Given any $\epsilon>0$, the smallest sample size $n$  to ensure the recovery error $\|\hat{M} - M\|_{\ell_2} \le \epsilon$ with probability at least $1-3\exp(-\delta^2/2)$ is 
\begin{align*}
n \geq \max\left\{ \frac{4\sigma^2}{(1-c)^4} \cdot  \frac{\gamma^2_{\mathcal{A}}(M)w^2(\mathcal{XA})}{\epsilon^2},~ \frac{4 w^2(B_2^p \cap T_\mathcal{A}(M))}{c^2}  \right\}.
\end{align*}
That is, the minimum sample size for guaranteed statistical accuracy is driven by two geometric terms $w(\mathcal{XA}) \gamma_{\mathcal{A}}(M)$ and $w(B_2^p \cap T_\mathcal{A}(M))$. We will see in Section \ref{Univ.App} that these two rates match in a range of specific high-dimensional estimation problems. For the other two loss functions, similar calculation applies. It should be noted that Theorem \ref{GE.Thm} provides a local analysis of the performance of the estimator for a given $M$, which is quite different from a usual global analysis over a large parameter space.

The proof of Theorem \ref{GE.Thm} (and Theorem \ref{GLC.Thm} in Section \ref{Gen.Thy}) relies on the following two key lemmas. The first one is on the choice of the tuning parameter  $\lambda$ which is based on the following lemma in the Gaussian case.
\begin{lemma}[Choice of Tuning Parameter]
\label{TP.Lemma}
Consider the linear inverse model \eqref{GLI.Model} with $Z\sim N(0, \frac{\sigma^2}{n} I_n)$. For any $\delta>0$, with probability at least $1 - \exp(-\delta^2/2)$,
\begin{align}
\| \mathcal{X}^* (Z) \|_{\mathcal{A}}^* & \leq  \frac{\sigma}{\sqrt{n}} \left\{ w(\mathcal{X A})+ \delta \cdot \sup_{v\in \mathcal{A}} \|\mathcal{X} v \|_{\ell_2}  \right\}.
\end{align}
\end{lemma}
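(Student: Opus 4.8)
The plan is to bound the dual atomic norm $\|\mathcal{X}^*(Z)\|_{\mathcal{A}}^*$ by recognizing it as the supremum of a Gaussian process indexed by the atom set, and then apply a Gaussian concentration inequality around its expectation, which is exactly a Gaussian width. First I would write, using the definition \eqref{ATOM.Dual} of the dual norm together with the adjoint relation $\langle \mathcal{X}^*(Z), a\rangle = \langle Z, \mathcal{X}(a)\rangle$,
\begin{align*}
\|\mathcal{X}^*(Z)\|_{\mathcal{A}}^* = \sup_{a \in \mathcal{A}} \langle \mathcal{X}^*(Z), a\rangle = \sup_{a \in \mathcal{A}} \langle Z, \mathcal{X}(a)\rangle = \sup_{u \in \mathcal{X}\mathcal{A}} \langle Z, u\rangle.
\end{align*}
Writing $Z = \frac{\sigma}{\sqrt{n}} g$ with $g \sim N(0, I_n)$, this equals $\frac{\sigma}{\sqrt{n}} \sup_{u \in \mathcal{X}\mathcal{A}} \langle g, u\rangle$, whose expectation over $g$ is precisely $\frac{\sigma}{\sqrt{n}} w(\mathcal{X}\mathcal{A})$ by \eqref{Gaussian.Width}.

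Next I would control the fluctuation of the supremum around its mean. The function $g \mapsto \sup_{u \in \mathcal{X}\mathcal{A}} \langle g, u\rangle$ is Lipschitz with respect to the Euclidean norm on $\mathbb{R}^n$, with Lipschitz constant $L = \sup_{u \in \mathcal{X}\mathcal{A}} \|u\|_{\ell_2} = \sup_{v \in \mathcal{A}} \|\mathcal{X} v\|_{\ell_2}$ (each linear functional $g \mapsto \langle g, u\rangle$ has Lipschitz constant $\|u\|_{\ell_2}$, and a supremum of $L$-Lipschitz functions is $L$-Lipschitz). By the Gaussian concentration inequality for Lipschitz functions, for any $t > 0$,
\begin{align*}
\Pr\left\{ \sup_{u \in \mathcal{X}\mathcal{A}} \langle g, u\rangle \geq w(\mathcal{X}\mathcal{A}) + t \right\} \leq \exp\left( - \frac{t^2}{2L^2} \right).
\end{align*}
Setting $t = \delta \cdot L = \delta \cdot \sup_{v \in \mathcal{A}} \|\mathcal{X} v\|_{\ell_2}$ gives the right-hand side $\exp(-\delta^2/2)$, and multiplying through by $\frac{\sigma}{\sqrt{n}}$ yields exactly the claimed bound on the event of probability at least $1 - \exp(-\delta^2/2)$.

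There is no serious obstacle here; the lemma is essentially the observation that the dual-norm-of-Gaussian-noise quantity is the supremum of a Gaussian process whose mean is a Gaussian width, combined with standard Gaussian Lipschitz concentration. The only points requiring a little care are (i) justifying the interchange of the supremum over $\mathcal{A}$ with the supremum over $\mathrm{conv}(\mathcal{A})$ — but this is already built into the hypothesis that the atoms are extreme points, and in any case the linear functional $\langle Z, \mathcal{X}(\cdot)\rangle$ attains its maximum over a compact convex set at an extreme point, so $\sup_{a \in \mathcal{A}}$ and $\sup_{a \in \mathrm{conv}(\mathcal{A})}$ agree; and (ii) confirming that the supremum is finite and measurable, which follows from compactness of $\mathcal{A}$ and continuity of $\mathcal{X}$. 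Thus the measurability and Lipschitz constant computation are the only routine items, and the proof is short.
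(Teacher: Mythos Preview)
Your proposal is correct and follows essentially the same argument as the paper: rewrite $\|\mathcal{X}^*(Z)\|_{\mathcal{A}}^*$ as $\sup_{v\in\mathcal{A}}\langle Z,\mathcal{X}v\rangle$, identify the expectation as $\frac{\sigma}{\sqrt{n}}w(\mathcal{X}\mathcal{A})$, and apply Gaussian concentration for Lipschitz functions with Lipschitz constant $\sup_{v\in\mathcal{A}}\|\mathcal{X}v\|_{\ell_2}$. The paper's proof spells out the Lipschitz estimate via the argmax trick on each side, whereas you use the ``sup of $L$-Lipschitz is $L$-Lipschitz'' observation; these are equivalent and the rest is identical.
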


This lemma is proved in Section \ref{Sec.Pf}. The particular value of $\lambda_{\mathcal{A}}(\mathcal{X},\sigma,n)$ for a range of examples will be calculated in Section \ref{Univ.App}.

The next lemma addresses the local behavior of the linear operator $\mathcal{X}$ around the true parameter $M$ under the Gaussian ensemble design. We call a linear operator \emph{locally near-isometric} if the local isometry constants are uniformly bounded. The following lemma tells us that in the most widely used Gaussian ensemble case, the local isometry constants are guaranteed to be bounded, given the sample size $n$ is at least of order $[w(B_2^p \cap T_\mathcal{A}(M))]^2$. Hence, the difficulty of the problem is captured by the Gaussian width.
\begin{lemma}[Local Isometry Bound for Gaussian Ensemble]
\label{LIC.Lemma}
Assume the linear operator $\mathcal{X}$ is the Gaussian ensemble design. Let $0<c<1$ be a constant. For any $\delta > 0$, if
\begin{align*}
n \geq \frac{4[w(B_2^p \cap T_\mathcal{A}(M))+\delta]^2}{c^2} \vee \frac{1}{c},
\end{align*}
then with probability at least $1 - 2\exp(-\delta^2/2)$, the local isometry constants are around 1 with
\begin{align*}
\phi_{\mathcal{A}}(M,\mathcal{X}) \geq 1-c \quad\mbox{and}\quad \psi_{\mathcal{A}}(M,\mathcal{X}) \leq 1+c.
\end{align*}
\end{lemma}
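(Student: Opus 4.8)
The plan is to prove the two-sided bound on the local isometry constants $\phi_{\mathcal{A}}(M,\mathcal{X})$ and $\psi_{\mathcal{A}}(M,\mathcal{X})$ by invoking a uniform concentration inequality for the Gaussian ensemble acting on the set $S := B_2^p \cap T_{\mathcal{A}}(M)$. First I would observe that, since $T_{\mathcal{A}}(M)$ is a cone, the ratios $\|\mathcal{X}(h)\|_{\ell_2}/\|h\|_{\ell_2}$ for $h \in T_{\mathcal{A}}(M)\setminus\{0\}$ are exactly the values $\|\mathcal{X}(u)\|_{\ell_2}$ for $u$ ranging over $S$; hence it suffices to show that with the claimed probability,
\begin{align*}
1-c \leq \inf_{u \in S}\|\mathcal{X}(u)\|_{\ell_2} \quad\text{and}\quad \sup_{u\in S}\|\mathcal{X}(u)\|_{\ell_2}\leq 1+c.
\end{align*}

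The workhorse is Gordon's escape-through-a-mesh / comparison inequality: for a Gaussian matrix $\mathcal{X}\in\mathbb{R}^{n\times p}$ with i.i.d.\ $N(0,1/n)$ entries and any subset $S$ of the unit sphere, one has (after rescaling by $1/\sqrt n$)
\begin{align*}
\mathbb{E}\Big[\inf_{u\in S}\|\mathcal{X}(u)\|_{\ell_2}\Big] \geq \frac{1}{\sqrt n}\big(\sqrt n - 1 - w(S)\big), \qquad
\mathbb{E}\Big[\sup_{u\in S}\|\mathcal{X}(u)\|_{\ell_2}\Big] \leq \frac{1}{\sqrt n}\big(\sqrt n + 1 + w(S)\big),
\end{align*}
where I am using the standard estimate $\mathbb{E}\|g\|_{\ell_2}\ge \sqrt{n}-1$ wait --- I should instead use $\mathbb{E}\|g\|_{\ell_2}\geq \frac{n}{\sqrt{n+1}}\geq \sqrt n - 1$, or more simply the crude bound sufficient here. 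Then the map $\mathcal{X}\mapsto \inf_{u\in S}\|\mathcal{X}(u)\|_{\ell_2}$ (resp.\ $\sup$) is $1/\sqrt n$-Lipschitz in the Frobenius norm of $\mathcal{X}$ viewed as a vector in $\mathbb{R}^{np}$ of i.i.d.\ $N(0,1/n)$ entries, i.e.\ $1$-Lipschitz in the underlying standard Gaussian coordinates divided by $\sqrt n$; so Gaussian concentration gives, for each $t>0$,
\begin{align*}
\mathbb{P}\Big(\inf_{u\in S}\|\mathcal{X}(u)\|_{\ell_2} \leq \mathbb{E}\big[\cdots\big] - \tfrac{t}{\sqrt n}\Big)\leq \exp(-t^2/2),
\end{align*}
and the analogous upper-tail bound for the supremum. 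Taking $t=\delta$ and combining, on the intersection event (probability $\ge 1-2\exp(-\delta^2/2)$) we get
\begin{align*}
\inf_{u\in S}\|\mathcal{X}(u)\|_{\ell_2}\geq 1 - \frac{1+w(S)+\delta}{\sqrt n}, \qquad \sup_{u\in S}\|\mathcal{X}(u)\|_{\ell_2}\leq 1 + \frac{1+w(S)+\delta}{\sqrt n}.
\end{align*}

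Finally I would check the arithmetic that closes the argument: under the hypothesis $n \geq 4[w(S)+\delta]^2/c^2 \vee 1/c$, we have $1/\sqrt n \leq c/2$ (from $n\ge 1/c \ge$ ... more carefully, from $n\ge 4/c^2$ when $w(S)+\delta\ge 1$, and one uses $n\ge 1/c$ to absorb the stray $1$) and $(w(S)+\delta)/\sqrt n \leq c/2$, so that $(1+w(S)+\delta)/\sqrt n \leq c$, yielding $\phi_{\mathcal{A}}(M,\mathcal{X})\geq 1-c$ and $\psi_{\mathcal{A}}(M,\mathcal{X})\leq 1+c$ as claimed. The main obstacle is the Gordon comparison step: one must state it in exactly the right form for a set of the sphere (not of a ball or subspace) and track the additive $\sqrt n - 1$ term and the $1/\sqrt n$ scaling carefully so that the constant "$1$" inside $1+w(S)+\delta$ can be absorbed by the $n \geq 1/c$ clause of the hypothesis; the Gaussian concentration step is routine given the Lipschitz bound, and the final inequalities are elementary.
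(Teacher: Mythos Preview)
Your approach is essentially identical to the paper's: reduce to unit vectors in the cone, apply Gordon's comparison inequality to bound $\mathbb{E}\inf_{u\in S}\|\mathcal{X}u\|_{\ell_2}$ and $\mathbb{E}\sup_{u\in S}\|\mathcal{X}u\|_{\ell_2}$, then use Gaussian Lipschitz concentration for the deviation. The only real issue is the final arithmetic, and you already sense it is shaky. With the crude bound $\mathbb{E}\|g\|_{\ell_2}\ge\sqrt{n}-1$ you are left needing $1/\sqrt{n}\le c/2$, i.e.\ $n\ge 4/c^2$; this is \emph{not} implied by the hypothesis $n\ge 1/c$, and is only implied by the other clause $n\ge 4[w(S)+\delta]^2/c^2$ when $w(S)+\delta\ge 1$, which is not assumed.

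The paper closes this gap by keeping the sharper estimate $\lambda_n:=\mathbb{E}\|g\|_{\ell_2}\ge n/\sqrt{n+1}$, which yields $\sqrt{n}-\lambda_n\le \tfrac{1}{2\sqrt{n}}$ rather than $1$. The lower-tail requirement then becomes $\sqrt{n}c-\tfrac{1}{2\sqrt{n}}\ge w(S)+\delta$. From $n\ge 4[w(S)+\delta]^2/c^2$ one gets $\tfrac{\sqrt{n}c}{2}\ge w(S)+\delta$, so it only remains to verify $\tfrac{\sqrt{n}c}{2}\ge \tfrac{1}{2\sqrt{n}}$, i.e.\ $nc\ge 1$, which is exactly the clause $n\ge 1/c$. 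In other words, the condition $n\ge 1/c$ is tailored precisely to absorb the $O(1/\sqrt{n})$ gap between $\lambda_n$ and $\sqrt{n}$, not a constant $1$; with your cruder bound the stated hypothesis does not suffice. Replace $\sqrt{n}-1$ by $n/\sqrt{n+1}$ and your argument goes through verbatim.
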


\subsection{Local Geometric Inference: Confidence Intervals and Hypothesis Testing}
\label{Geo.Infer.Sec}

For statistical inference on the general linear inverse model, we would like to choose the smallest $\eta$ in \eqref{FES.Con} to ensure that, under the Gaussian ensemble design, the feasibility set for \eqref{FES.Con} is non-empty with high probability. The following theorem establishes geometric inference for Model~\eqref{GLI.Model} .

\begin{theorem}[Geometric Inference]
	\label{CI.Thm}
	Suppose we observe $(\mathcal{X}, \; Y)$ as in \eqref{GLI.Model} with the Gaussian ensemble design and $Z\sim N(0, \frac{\sigma^2}{n} I_n)$. Let $\hat{M} \in \mathbb{R}^p,\Omega \in \mathbb{R}^{p \times p}$ be the solution of \eqref{CST.Min} and \eqref{FES.Con} , and let $\tilde{M} \in \mathbb{R}^p$ be the de-biased estimator as in \eqref{DB.Est}. Assume 
	$p \geq n \succsim w^2(B_2^p \cap T_\mathcal{A}(M)).$
	If the tuning parameters $\lambda, \eta$ are chosen with
	\begin{align*}
		\lambda \asymp \frac{\sigma}{\sqrt{n}} w(\mathcal{XA}), \quad  \eta \asymp \frac{1}{\sqrt{n}} w(\mathcal{XA}), 
	\end{align*}
	convex programs \eqref{CST.Min} and \eqref{FES.Con} have non-empty feasibility set for $\Omega$ with high probability.	
	
The following decomposition
	\begin{align}
		\tilde{M} - M  = \Delta + \frac{\sigma}{\sqrt{n}} \Omega \mathcal{X}^* W
	\end{align}
	holds, where $W \sim N(0, I_n)$ is the standard Gaussian vector with
	$$
	\Omega \mathcal{X}^* W \sim N(0, \Omega \mathcal{X}^* \mathcal{X} \Omega^*).
	$$
	and $\Delta \in \mathbb{R}^p$ satisfies 
	$$ 
	\| \Delta \|_{\infty} \precsim \gamma^2_{\mathcal{A}}(M) \cdot \lambda \eta \asymp \sigma \frac{\gamma^2_{\mathcal{A}}(M) w^2(\mathcal{XA})}{n}.
	$$
	Suppose
	\begin{align*}
		\lim_{n,p \rightarrow \infty } \frac{\gamma^2_{\mathcal{A}}(M) w^2(\mathcal{XA})}{\sqrt{n}}  = 0,
	\end{align*}
	then for any $v \in \mathbb{R}^p, \|v\|_{\ell_2}=1, \|v\|_{\ell_0} \leq k$ with $k$ finite, we have the asymptotic normality for the functional $\langle v, \tilde{M}\rangle$, 
	\begin{align}
	\label{asymp.normality}
	\frac{\sqrt{n} \left(\langle v, \tilde{M}\rangle - \langle v, M\rangle\right)}{ \sigma \sqrt{ v^* [\Omega \mathcal{X}^* \mathcal{X} \Omega^*]v} } \stackrel{n,p \rightarrow \infty}{\sim} N(0,1)
	\end{align}

\end{theorem}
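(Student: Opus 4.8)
The plan is to exploit the exact algebraic decomposition of the de-biased estimator and then split off a vanishing bias from an exactly Gaussian term. Substituting $Y=\mathcal{X}(M)+Z$ into \eqref{DB.Est} gives
\[
\tilde M-M=\bigl(I-\Omega\mathcal{X}^{*}\mathcal{X}\bigr)(\hat M-M)+\Omega\mathcal{X}^{*}Z ,
\]
so I set $\Delta:=(I-\Omega\mathcal{X}^{*}\mathcal{X})(\hat M-M)$ and write $Z=\tfrac{\sigma}{\sqrt n}W$ with $W=\tfrac{\sqrt n}{\sigma}Z\sim N(0,I_n)$, which yields $\tilde M-M=\Delta+\tfrac{\sigma}{\sqrt n}\Omega\mathcal{X}^{*}W$. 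Because the noise $Z$ is independent of the design $\mathcal{X}$ and $\Omega$ is a measurable function of $\mathcal{X}$ alone, $W$ is independent of the pair $(\mathcal{X},\Omega)$; hence, conditionally on $(\mathcal{X},\Omega)$, the vector $\Omega\mathcal{X}^{*}W$ is centered Gaussian with covariance $\Omega\mathcal{X}^{*}\mathcal{X}\Omega^{*}$, which gives the stated distributional identity.

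Next I would handle feasibility and the size of $\Delta$. For feasibility, exhibit $\Omega=I_p$ as a feasible point of \eqref{FES.Con}: here $\mathcal{X}^{*}\mathcal{X}\Omega_{i\cdot}^{*}-e_i=(\mathcal{X}^{*}\mathcal{X}-I)e_i$ and $\|(\mathcal{X}^{*}\mathcal{X}-I)e_i\|_{\mathcal{A}}^{*}=\sup_{a\in\mathcal{A}}\bigl(\langle\mathcal{X}e_i,\mathcal{X}a\rangle-\langle e_i,a\rangle\bigr)$, a centered Gaussian chaos of order two in the entries of $\mathcal{X}$ indexed by the atom set. Parallel to Lemma \ref{TP.Lemma} (with a column of $\mathcal{X}$ in the role of $Z$ and noise level $1$), the expectation of this supremum is of order $\tfrac{1}{\sqrt n}w(\mathcal{XA})$; a concentration bound for the supremum together with a union bound over $i=1,\dots,p$ then shows that $\Omega=I$ satisfies \eqref{FES.Con} with $\eta\asymp\tfrac{1}{\sqrt n}w(\mathcal{XA})$ with high probability, so the feasible set is nonempty and \eqref{FES2.Con} returns $\eta_n\precsim\tfrac{1}{\sqrt n}w(\mathcal{XA})$. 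For the bias, fix any feasible $\Omega$; for each coordinate,
\[
\Delta_i=\bigl\langle e_i-\mathcal{X}^{*}\mathcal{X}\,\Omega_{i\cdot}^{*},\;\hat M-M\bigr\rangle ,
\]
so the Cauchy--Schwarz inequality \eqref{Cauchy.Schwarz}, the constraint \eqref{FES.Con}, and the atomic-norm error bound $\|\hat M-M\|_{\mathcal{A}}\precsim\gamma^2_{\mathcal{A}}(M)\,\sigma w(\mathcal{XA})/\sqrt n\asymp\gamma^2_{\mathcal{A}}(M)\lambda$ from Theorem \ref{GE.Thm} (applicable since $n\succsim w^2(B_2^p\cap T_{\mathcal{A}}(M))$) give $|\Delta_i|\le\eta\,\|\hat M-M\|_{\mathcal{A}}\precsim\gamma^2_{\mathcal{A}}(M)\lambda\eta$ uniformly in $i$, which is the claimed bound on $\|\Delta\|_\infty$.

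To conclude, project onto $v$ and write, with $\tau^2:=v^{*}\Omega\mathcal{X}^{*}\mathcal{X}\Omega^{*}v$,
\[
\frac{\sqrt n\,\bigl(\langle v,\tilde M\rangle-\langle v,M\rangle\bigr)}{\sigma\,\tau}=\frac{\sqrt n\,\langle v,\Delta\rangle}{\sigma\,\tau}+\frac{\langle v,\Omega\mathcal{X}^{*}W\rangle}{\tau}.
\]
The second summand is exactly $N(0,1)$ conditionally on $(\mathcal{X},\Omega)$, hence $N(0,1)$ unconditionally. For the first, $\sqrt n\,|\langle v,\Delta\rangle|\le\sqrt n\,\|v\|_{\ell_1}\,\|\Delta\|_\infty\le\sqrt k\,\sqrt n\,\|\Delta\|_\infty\precsim\sqrt k\,\sigma\,\gamma^2_{\mathcal{A}}(M)w^2(\mathcal{XA})/\sqrt n$ (using $\|v\|_{\ell_1}\le\sqrt k\,\|v\|_{\ell_2}=\sqrt k$ since $\|v\|_{\ell_0}\le k$), so after dividing by $\sigma\tau$ it tends to $0$ by the hypothesis $\gamma^2_{\mathcal{A}}(M)w^2(\mathcal{XA})/\sqrt n\to0$, provided $\tau$ is bounded below; and $\tau^2=\|\mathcal{X}\Omega^{*}v\|_{\ell_2}^2$ is bounded below by a positive constant with high probability because $\langle\mathcal{X}\Omega^{*}v,\mathcal{X}v\rangle=1+v^{*}(\mathcal{X}^{*}\mathcal{X}\Omega^{*}-I)v$ with $|v^{*}(\mathcal{X}^{*}\mathcal{X}\Omega^{*}-I)v|\le\|v\|_{\ell_1}\|v\|_{\mathcal{A}}\,\eta\to0$ (again by \eqref{Cauchy.Schwarz} and \eqref{FES.Con}, with $\|v\|_{\mathcal{A}}$ bounded since $k$ is fixed), together with $\|\mathcal{X}v\|_{\ell_2}\to1$ for fixed $v$ and one more use of Cauchy--Schwarz. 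Slutsky's theorem then gives the asymptotic normality \eqref{asymp.normality}.

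I expect the feasibility step to be the main obstacle: establishing that $\eta\asymp\tfrac{1}{\sqrt n}w(\mathcal{XA})$ is admissible requires a tail bound for the supremum, over the possibly uncountable atom set, of an order-two Gaussian chaos in the design matrix, uniformly over the $p$ coordinates. The remaining pieces --- the decomposition, the bound on $\|\Delta\|_\infty$, the variance lower bound, and the final limit --- are essentially bookkeeping with \eqref{Cauchy.Schwarz}, Theorem \ref{GE.Thm}, and Slutsky's theorem.
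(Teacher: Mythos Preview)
Your proposal is correct and follows essentially the same route as the paper: the identical algebraic decomposition $\tilde M-M=(\Omega\mathcal{X}^{*}\mathcal{X}-I)(M-\hat M)+\Omega\mathcal{X}^{*}Z$, feasibility of \eqref{FES.Con} established by exhibiting $\Omega=I_p$ and bounding $\|\mathcal{X}^{*}\mathcal{X}e_i-e_i\|_{\mathcal{A}}^{*}$ via a Lemma~\ref{TP.Lemma}--type argument (the paper carries this out by explicitly decoupling the $i$-th column of $\mathcal{X}$ from the rest), and the coordinatewise bound on $\Delta$ via \eqref{Cauchy.Schwarz}, the constraint \eqref{FES.Con}, and the atomic-norm rate from Theorem~\ref{GE.Thm}. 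You are in fact more careful than the paper in spelling out the final step, supplying the variance lower bound $\tau^2\gtrsim 1$ and invoking Slutsky's theorem, which the paper leaves implicit.
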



		It follows from Theorem \ref{CI.Thm} that a valid asymptotic $(1-\alpha)$-level confidence intervals for $M_i, 1\leq i \leq p$ (when $v$ is taken as $e_i$ in Theorem~\ref{CI.Thm}) is
		\begin{align}
		\label{CI}
		\left[\tilde{M}_i + \Phi^{-1}\left(\frac{\alpha}{2}\right) \sigma \sqrt{\frac{[\Omega \mathcal{X}^* \mathcal{X} \Omega^*]_{ii}}{n}}, \quad \tilde{M}_i + \Phi^{-1}\left(1- \frac{\alpha}{2}\right) \sigma \sqrt{\frac{[\Omega \mathcal{X}^* \mathcal{X} \Omega^*]_{ii}}{n}} \right].
		\end{align}

		If we are interested in a low-dimensional linear contrast $\langle v, M \rangle= v_0$, $\|v\|_{\ell_2} = 1, \| v \|_{\ell_0}=k$ with $k$ fixed, consider the hypothesis testing problem
		$$
		H_0: \sum_{i=1}^p v_i M_i  = v_0  ~~\text{ v.s. }~~ H_\alpha: \sum_{i=1}^p v_i M_i  \neq v_0.
		$$
		The test statistic is 
		$$
		\frac{\sqrt{n} \left(\langle v, \tilde{M} \rangle - v_0\right)}{\sigma \left( v^* [\Omega \mathcal{X}^* \mathcal{X} \Omega^*]v\right)^{1/2}}
		$$
		and under the null, it follows an asymptotic standard normal distribution as $n\rightarrow \infty$. 
		
		Similarly, the $p$-value is of the form
		$$
		2 - 2 \Phi^{-1}\left( \left| \frac{\sqrt{n} \left(\langle v, \tilde{M} \rangle - v_0\right)}{\sigma \left( v^* [\Omega \mathcal{X}^* \mathcal{X} \Omega^*]v\right)^{1/2}} \right| \right)
		$$
		as $n \rightarrow \infty$.
		
		Note the asymptotic normality holds for any finite linear contrast, and the asymptotic variance nearly achieves the Fisher information lower bound, as $\Omega$ is an estimate of the inverse of $\mathcal{X}^*\mathcal{X}$. For fixed dimension inference, Fisher information lower bound is asymptotically optimal.

		\begin{remark}{\rm
		Note that the condition for estimation consistency of the parameter $M$ under the $\ell_2$ norm  is
		$$
		\lim_{n,p \rightarrow \infty } \frac{\gamma_{\mathcal{A}}(M) w(\mathcal{XA})}{\sqrt{n}}  = 0.
		$$
		In contrast, valid confidence intervals require a stronger condition
		$$
		\lim_{n,p \rightarrow \infty } \frac{\gamma^2_{\mathcal{A}}(M) w^2(\mathcal{XA})}{\sqrt{n}}  = 0.
		$$
		In the case when $n > p$ and the Gaussian ensemble design, $\mathcal{X}^* \mathcal{X}$ is non-singular with high probability. With the choice of $\Omega = (\mathcal{X}^* \mathcal{X})^{-1}$ and $\eta = 0$, for any $i \in [p]$, the following equation
		$$
		\sqrt{n} (\tilde{M}_i - M_i) \sim N(0, \sigma^2 [ (\mathcal{X}^* \mathcal{X})^{-1} ]_{ii})
		$$
		holds non-asymptotically.
		}
	\end{remark}

\subsection{Minimax Lower Bound for Local Tangent Cone}
\label{GE.Low.Bd}

As seen in Section \ref{GE.Upp.Bd} and \ref{Geo.Infer.Sec}, the local tangent cone plays an important role in the upper bound analysis. In this section, we are interested in restricting the parameter space to the local tangent cone and seeing how the geometry of the cone affects the minimax lower bound. 

\begin{theorem}[Lower bound Based on Local Tangent Cone]
\label{GE.Low}
Suppose we observe $(\mathcal{X}, \; Y)$ as in \eqref{GLI.Model} with the Gaussian ensemble design and $Z\sim N(0, \frac{\sigma^2}{n} I_n)$. Let $M$ be the true parameter of interest. Let $0<c<1$ be a constant. For any $\delta > 0$, if
\begin{align*}
n \geq \frac{4[w(B_2^p \cap T_\mathcal{A}(M))+\delta]^2}{c^2} \vee \frac{1}{c}.
\end{align*}
Then with probability at least $1 - 2 \exp(-\delta^2/2)$, 
\begin{align*}
\inf_{\hat{M}} \sup_{M' \in T_{\mathcal{A}}(M)} \mathbb{E}_{\cdot |\mathcal{X}} \| \hat{M} - M' \|_{\ell_2}^2 & \geq  \frac{c_0 \sigma^2}{(1+c)^2} \cdot \left(  \frac{e(B_2^p \cap T_\mathcal{A}(M))}{\sqrt{n}}    \right)^2
\end{align*}
for some universal constant $c_0>0$. Here $\mathbb{E}_{\cdot | \mathcal{X}}$ stands for the conditional expectation given the design matrix $\mathcal{X}$, and the probability statement is with respect to the distribution of $\mathcal{X}$ under the Gaussian ensemble design.
\end{theorem}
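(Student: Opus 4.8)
The plan is to reduce the estimation problem restricted to the tangent cone to a multiple-hypothesis testing problem over a finely spaced packing of the cone near the origin, and then invoke Fano's inequality. The Sudakov estimate $e(K)$, with $K := B_2^p \cap T_{\mathcal{A}}(M)$, enters precisely because it is, by definition, the scale at which $\log$ of the covering number best balances against the squared resolution.

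First I would condition on the design matrix and work on the event of Lemma \ref{LIC.Lemma} on which $\psi_{\mathcal{A}}(M,\mathcal{X}) \le 1+c$; under the stated sample-size condition this event has probability at least $1 - 2\exp(-\delta^2/2)$, matching the probability claimed in the theorem, and only the upper isometry constant $\psi$ is needed. Next, using the definition of $e(K)$ as a supremum, I would fix a scale $\epsilon^{\ast}$ (necessarily $\epsilon^{\ast} \le \mathrm{diam}(K) \le 2$) with $\epsilon^{\ast}\sqrt{\log \mathcal{N}(K,\epsilon^{\ast})} \ge \tfrac12 e(K)$, and extract a maximal $\epsilon^{\ast}$-packing $v_1,\dots,v_N$ of $K$. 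A maximal packing is a covering, so $N = \mathcal{M}(K,\epsilon^{\ast}) \ge \mathcal{N}(K,\epsilon^{\ast})$ and hence $\log N \ge \big(e(K)/(2\epsilon^{\ast})\big)^2$.

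Then I would rescale: put $M_i := \rho\, v_i$ with $\rho = \rho_0\, \sigma e(K)/\big((1+c)\sqrt{n}\,\epsilon^{\ast}\big)$ for a small absolute constant $\rho_0$; since $T_{\mathcal{A}}(M)$ is a cone and each $v_i \in T_{\mathcal{A}}(M)$, every $M_i$ is a legitimate point of the parameter space, and $\|M_i - M_j\|_{\ell_2} = \rho\|v_i - v_j\|_{\ell_2} \ge \rho\epsilon^{\ast}$. Since $Y\mid \mathcal{X}$ is Gaussian with covariance $\tfrac{\sigma^2}{n}I_n$, one has $\mathrm{KL}(P_{M_i}\,\|\,P_{M_j}) = \tfrac{n}{2\sigma^2}\|\mathcal{X}(M_i-M_j)\|_{\ell_2}^2$. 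Here is the one genuinely delicate point: $v_i - v_j$ need not lie in $T_{\mathcal{A}}(M)$, so Lemma \ref{LIC.Lemma} does not apply to it directly. I would sidestep this with the triangle inequality, $\|\mathcal{X}(v_i-v_j)\|_{\ell_2} \le \|\mathcal{X} v_i\|_{\ell_2} + \|\mathcal{X} v_j\|_{\ell_2} \le 2(1+c)$, which uses only that $v_i,v_j \in T_{\mathcal{A}}(M)$ have Euclidean norm at most one. This gives $\mathrm{KL}(P_{M_i}\,\|\,P_{M_j}) \le 2n(1+c)^2\rho^2/\sigma^2 = 2\rho_0^2\,e(K)^2/(\epsilon^{\ast})^2 \le 8\rho_0^2 \log N$, which is below $\tfrac18 \log N$ once $\rho_0$ is a small enough absolute constant. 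A standard Fano-type argument then yields $\inf_{\hat M}\max_i \mathbb{E}_{\cdot\mid\mathcal{X}}\|\hat M - M_i\|_{\ell_2}^2 \gtrsim (\rho\epsilon^{\ast})^2 = \rho_0^2\,\sigma^2 e(K)^2/\big((1+c)^2 n\big)$, and since every $M_i \in T_{\mathcal{A}}(M)$ this lower-bounds $\inf_{\hat M}\sup_{M'\in T_{\mathcal{A}}(M)}\mathbb{E}_{\cdot\mid\mathcal{X}}\|\hat M - M'\|_{\ell_2}^2$, which is the claim with $c_0 = \rho_0^2/8$ (up to the absolute constant in Fano).

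The main obstacle, flagged above, is that differences of packing points escape the tangent cone, so the local isometry lemma is not literally applicable to $\mathcal{X}(v_i - v_j)$; the triangle-inequality workaround costs only a constant factor. A secondary nuisance is the degenerate regime where the $\epsilon^{\ast}$-packing of $K$ has too few points for Fano to bite (equivalently, $e(K)$ is of constant order): there one falls back on Le Cam's two-point argument, or simply absorbs the case into $c_0$, since the target lower bound is then merely of order $\sigma^2/n$. One should also note that if $e(K)=0$ the assertion is vacuous.
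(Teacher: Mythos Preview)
Your proposal is correct and follows essentially the same route as the paper: the paper proves Theorem~\ref{GE.Low} by specializing the general Theorem~\ref{CE.Low} (Fano's inequality applied to a rescaled packing of $B_2^p\cap T$, with the KL divergence controlled via the same triangle-inequality trick $\|\mathcal{X}(M-M')\|_{\ell_2}\le \|\mathcal{X}M\|_{\ell_2}+\|\mathcal{X}M'\|_{\ell_2}\le 2\delta\psi$ that you flag as the ``delicate point'') and then invoking Lemma~\ref{LIC.Lemma} to bound $\psi_{\mathcal{A}}(M,\mathcal{X})\le 1+c$. Your parametrization via a single scaling $\rho$ is cosmetically different from the paper's two parameters $\delta,\epsilon$, but the argument and the constants line up.
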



In the Gaussian setting, when $n \gtrsim w^2(B_2^p \cap T_{\mathcal{A}}(M))$, we have the following observations. From Theorem \ref{GE.Thm}, the local upper bound is basically determined by $\gamma^2_{\mathcal{A}}(M) w^2(\mathcal{XA})$, which is of the rate $w^2(B_2^p \cap T_{\mathcal{A}}(M))$, as we will show in Section \ref{Univ.App} in many examples. The general relationship between these two quantities is given in Lemma \ref{Upp.Link} below.
\begin{lemma}
\label{Upp.Link}
For any atom set $\mathcal{A}$, we have the following relation
\begin{align*}
\gamma_{\mathcal{A}}(M)w(\mathcal{A}) \geq w(B_2^p \cap T_{\mathcal{A}}(M))
\end{align*}
where $w(\cdot)$ is the Gaussian width and $\gamma_{\mathcal{A}}(M)$ is defined in \eqref{Asphere.Ratio}.
\end{lemma}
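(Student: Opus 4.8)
The plan is to reduce the inequality to a single set inclusion combined with the elementary monotonicity and homogeneity properties of the Gaussian width. First I would record two basic facts about $w(\cdot)$: it is monotone under set inclusion (since $K_1 \subseteq K_2$ implies $\sup_{v \in K_1}\langle g,v\rangle \le \sup_{v \in K_2}\langle g,v\rangle$ pointwise in $g$), and it is positively homogeneous, $w(tK) = t\,w(K)$ for $t>0$. I would also note that $w(\mathcal{A}) = w({\sf conv}(\mathcal{A}))$, because the support function of a set agrees with that of its convex hull; and ${\sf conv}(\mathcal{A})$ is precisely the unit ball of the atomic norm $\|\cdot\|_{\mathcal{A}}$, by \eqref{ATOM.Norm}.

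The heart of the argument is the inclusion
\begin{align*}
B_2^p \cap T_{\mathcal{A}}(M) \;\subseteq\; \gamma_{\mathcal{A}}(M)\cdot {\sf conv}(\mathcal{A}).
\end{align*}
To see this, take any nonzero $h \in B_2^p \cap T_{\mathcal{A}}(M)$. Then $\|h\|_{\ell_2} \le 1$, and $h \in T_{\mathcal{A}}(M)$, so by the definition \eqref{Asphere.Ratio} of the local asphericity ratio we have $\|h\|_{\mathcal{A}} \le \gamma_{\mathcal{A}}(M)\,\|h\|_{\ell_2} \le \gamma_{\mathcal{A}}(M)$. Hence $h$ lies in the $\gamma_{\mathcal{A}}(M)$-dilate of the atomic-norm unit ball ${\sf conv}(\mathcal{A})$, which gives the inclusion (the case $h=0$ is trivial).

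Applying monotonicity and then homogeneity of the Gaussian width to this inclusion yields
\begin{align*}
w\big(B_2^p \cap T_{\mathcal{A}}(M)\big) \;\le\; w\big(\gamma_{\mathcal{A}}(M)\cdot {\sf conv}(\mathcal{A})\big) \;=\; \gamma_{\mathcal{A}}(M)\, w\big({\sf conv}(\mathcal{A})\big) \;=\; \gamma_{\mathcal{A}}(M)\, w(\mathcal{A}),
\end{align*}
which is the claimed bound. There is no genuinely hard step here; the only point requiring a moment's care is the reduction $w(\mathcal{A}) = w({\sf conv}(\mathcal{A}))$, which I would justify by noting that $\sup_{v\in\mathcal{A}}\langle g,v\rangle = \sup_{v\in{\sf conv}(\mathcal{A})}\langle g,v\rangle$ for every fixed $g$ (already used implicitly in \eqref{ATOM.Dual}), after which the expectation over $g$ is identical. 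Everything else is immediate from the definitions.
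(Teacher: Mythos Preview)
Your proof is correct. The argument via the set inclusion $B_2^p \cap T_{\mathcal{A}}(M) \subseteq \gamma_{\mathcal{A}}(M)\cdot {\sf conv}(\mathcal{A})$ together with monotonicity and homogeneity of the Gaussian width is clean and complete.

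The paper's own proof reaches the same conclusion by a slightly different, more analytic route: it writes $w(\mathcal{A}) = \mathbb{E}_g \|g\|_{\mathcal{A}}^*$, pulls the constant $\gamma_{\mathcal{A}}(M)$ inside the expectation, and then applies the Cauchy--Schwarz inequality $\|g\|_{\mathcal{A}}^* \|h\|_{\mathcal{A}} \ge \langle g,h\rangle$ pointwise in $g$ before taking the supremum over $h \in T_{\mathcal{A}}(M)$ and the expectation over $g$. Unwinding that argument, one sees it is exactly the support-function version of your set inclusion (the inclusion $K_1 \subseteq K_2$ is equivalent to $\sup_{v\in K_1}\langle g,v\rangle \le \sup_{v\in K_2}\langle g,v\rangle$ for all $g$, for convex $K_2$ containing the origin). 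So the two proofs are equivalent at the core; yours makes the geometric content explicit and avoids invoking the dual norm and \eqref{Cauchy.Schwarz}, while the paper's version highlights the duality relation that is used elsewhere in the analysis. Neither approach has any real advantage in generality or strength here.
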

Lemma \ref{Upp.Link} is proved in Appendix \ref{Sec.Tech.Lma}. 

From Theorem \ref{GE.Low}, the minimax lower bound for estimation over the local tangent cone is determined by the Sudakov estimate $e^2(B_2^p \cap T_{\mathcal{A}}(M))$. 
An interesting question is: How are the two terms $w(B_2^p \cap T_{\mathcal{A}}(M))$ and $e(B_2^p \cap T_{\mathcal{A}}(M))$ related to each other? It follows directly from  Lemma \ref{Sudakov.Lemma} that there exists a universal constant $c>0$ such that 
$
c \cdot e(B_2^p \cap T_{\mathcal{A}}(M)) \leq w(B_2^p \cap T_{\mathcal{A}}(M)) \leq 24 \int_0^\infty \sqrt{\log \mathcal{N}(B_2^p \cap T_{\mathcal{A}}(M),\epsilon)} d \epsilon.$
Thus we have shown that under the Gaussian setting, both in terms of the upper bound and lower bound, geometric complexity measures govern the difficulty of the estimation problem, through closely related quantities Gaussian width and Sudakov estimate.

\subsection{Universality of the Geometric Approach}
\label{Univ.App}

In this section we apply the general theory under the Gaussian setting to some of the actively studied high-dimensional problems mentioned in Section \ref{Sec.Intro} to illustrate the wide applicability of the theory. The detail proofs are deferred to Appendix \ref{Pf.Cor}.

\subsubsection{High Dimensional Linear Regression}

We begin by considering the high-dimensional linear regression model \eqref{sparse.model} under the assumption that the true parameter $M \in \mathbb{R}^p$ is sparse, say $\| M \|_{l_0} = s$. Our general theory applying to the $\ell_1$ minimization recovers the optimality results as in Dantzig selector and Lasso.
In this case, it can be shown that $\gamma_{\mathcal{A}}(M)w(\mathcal{A})$ and $w(B_2^p \cap T_{\mathcal{A}}(M))$ are of the same rate $\sqrt{s\log p}$. See Section \ref{Pf.Cor.1} for the detailed calculations.
The asphericity ratio $\gamma_{\mathcal{A}}(M) \geq \frac{1}{2\sqrt{s}}$ reflects the sparsity of $M$ through the local tangent cone and the Gaussian width $w(\mathcal{X A}) \asymp \sqrt{\log p}$. 
The following corollary, proved in Section \ref{Pf.Cor.1}, follows from the geometric analysis of the high-dimensional regression model.

\begin{corollary}
\label{Cor.1}
Consider the high-dimensional linear regression model \eqref{sparse.model}. Assume that $\mathcal{X} \in \mathbb{R}^{n \times p}$ is the Gaussian ensemble design and the parameter of interest $M \in \mathbb{R}^p$ is of sparsity $s$. Let $\hat{M}$ be the solution to the constrained $\ell_1$ minimization \eqref{CST.Min} with $\lambda = C_1  \sigma \sqrt{\frac{\log p}{n}}$. If $n \geq C_2 s \log p$, then
\begin{align*}
\| \hat{M} - M\|_{\ell_2} \leq C_3 \cdot \sigma \sqrt{\frac{s\log p}{n}},\\
\| \hat{M} - M \|_{\ell_1} \leq C_3 \cdot \sigma s \sqrt{\frac{\log p}{n}},\\
\| \mathcal{X}(\hat{M} - M) \|_{\ell_2} \leq C_3 \cdot \sigma  \sqrt{\frac{s\log p}{n}}.
\end{align*}
with high probability, where $C_i>0, 1\leq i\leq 3$ are some universal constants.
\end{corollary}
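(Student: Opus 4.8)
The plan is to obtain Corollary~\ref{Cor.1} as a direct specialization of Theorem~\ref{GE.Thm}, so that the only genuine work is to bound the three geometric quantities $w(\mathcal{XA})$, $\gamma_{\mathcal{A}}(M)$ and $w(B_2^p \cap T_{\mathcal{A}}(M))$ that enter that theorem, and to check that the tuning parameter \eqref{Tun.Param} reduces to the stated choice $\lambda = C_1\sigma\sqrt{\log p/n}$. For sparse regression the atom set is $\mathcal{A} = \{\pm e_i : 1\le i\le p\}$, so the atomic norm is the $\ell_1$ norm, its dual is the $\ell_\infty$ norm, and the program \eqref{CST.Min} is exactly the Dantzig selector $\hat M = \argmin\{\|M\|_{\ell_1} : \|\mathcal{X}^*(Y-\mathcal{X}M)\|_{\ell_\infty}\le\lambda\}$. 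All estimates below are on the Gaussian ensemble event where the relevant concentration bounds hold.

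First I would control $w(\mathcal{XA})$. The image of the atoms is the set of signed columns $\{\pm \mathcal{X}e_j\}_{j=1}^p$. Under the Gaussian ensemble each column is $N(0,n^{-1}I_n)$, so $n\|\mathcal{X}e_j\|_{\ell_2}^2 \sim \chi^2_n$; a $\chi^2$ tail bound together with a union over $j$ gives $\max_j\|\mathcal{X}e_j\|_{\ell_2}\le 2$ with probability at least $1-2pe^{-cn}$. On that event $\mathcal{XA}$ is a set of at most $2p$ vectors of Euclidean norm $\le 2$, and the Gaussian width of a finite point set of $N$ points of radius $R$ is at most $R\sqrt{2\log N}$; hence $w(\mathcal{XA}) \lesssim \sqrt{\log p}$, and likewise $\sup_{v\in\mathcal{A}}\|\mathcal{X}v\|_{\ell_2}\le 2$. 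Plugging these into \eqref{Tun.Param} with $\delta\asymp\sqrt{\log p}$ (the $\delta\sup_v\|\mathcal{X}v\|_{\ell_2}$ term alone is already of order $\sqrt{\log p}$) yields $\lambda_{\mathcal{A}}(\mathcal{X},\sigma,n) \asymp \sigma\sqrt{\log p/n}$, matching the corollary's choice after adjusting $C_1$.

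Next I would bound the two cone quantities. Write $S = \mathrm{supp}(M)$, $|S| = s$. By the descent-cone characterization of the $\ell_1$ norm, every $h\in T_{\mathcal{A}}(M)$ satisfies $\|h_{S^c}\|_{\ell_1}\le\|h_S\|_{\ell_1}$, whence $\|h\|_{\ell_1}\le 2\|h_S\|_{\ell_1}\le 2\sqrt{s}\,\|h_S\|_{\ell_2}\le 2\sqrt{s}\,\|h\|_{\ell_2}$; by \eqref{Asphere.Ratio} this gives $\gamma_{\mathcal{A}}(M)\le 2\sqrt{s}$ (and $\gamma_{\mathcal{A}}(M)\ge 1$ trivially, since $\|h\|_{\ell_1}\ge\|h\|_{\ell_2}$). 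For $w(B_2^p\cap T_{\mathcal{A}}(M))$ I would invoke the standard estimate $w(B_2^p\cap T_{\mathcal{A}}(M))\lesssim\sqrt{s\log(p/s)}\le\sqrt{s\log p}$ for the tangent cone of the $\ell_1$ ball at an $s$-sparse point (Proposition~3.10 in \cite{chandrasekaran2012convex}; alternatively a direct covering argument over the $\binom{p}{s}$ low-dimensional faces of the cross-polytope). Consequently the hypothesis $n\ge C_2 s\log p$ implies $n\gtrsim w^2(B_2^p\cap T_{\mathcal{A}}(M))$, so the sample-size condition of Theorem~\ref{GE.Thm} holds with, say, $c=\tfrac12$.

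Finally I would assemble the pieces: on the intersection of the $\chi^2$-concentration event and the event of Theorem~\ref{GE.Thm} (probability at least $1-3e^{-\delta^2/2}-2pe^{-cn}\ge 1-p^{-c'}$ for $\delta\asymp\sqrt{\log p}$), Theorem~\ref{GE.Thm} yields
\begin{align*}
\|\hat M - M\|_{\ell_2} &\lesssim \sigma\,\frac{\gamma_{\mathcal{A}}(M)\,w(\mathcal{XA})}{\sqrt n} \lesssim \sigma\sqrt{\frac{s\log p}{n}},\\
\|\hat M - M\|_{\ell_1} &\lesssim \sigma\,\frac{\gamma_{\mathcal{A}}^2(M)\,w(\mathcal{XA})}{\sqrt n} \lesssim \sigma s\sqrt{\frac{\log p}{n}},\\
\|\mathcal{X}(\hat M - M)\|_{\ell_2} &\lesssim \sigma\,\frac{\gamma_{\mathcal{A}}(M)\,w(\mathcal{XA})}{\sqrt n} \lesssim \sigma\sqrt{\frac{s\log p}{n}},
\end{align*}
which is exactly Corollary~\ref{Cor.1} after renaming constants. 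The main obstacle is not the bookkeeping but the two genuinely geometric inputs: the maximal-inequality bound on $w(\mathcal{XA})$ combined with the column-norm concentration of the random Gaussian design, and especially the $\sqrt{s\log(p/s)}$ bound on $w(B_2^p\cap T_{\mathcal{A}}(M))$, which is where the $\log p$ factor (hence the minimax-optimal rate) is produced and which requires either the cited computation or a careful union bound over the faces of the $\ell_1$ ball.
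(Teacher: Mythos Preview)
Your proposal is correct and follows essentially the same route as the paper: bound $\gamma_{\mathcal{A}}(M)\le 2\sqrt{s}$ via the cone inequality $\|h_{S^c}\|_{\ell_1}\le\|h_S\|_{\ell_1}$, invoke the $\sqrt{s\log(p/s)}$ Gaussian-width bound of \cite{chandrasekaran2012convex} for the sample-size condition, bound $w(\mathcal{XA})\lesssim\sqrt{\log p}$ as the Gaussian width of finitely many points of bounded norm, and then read off the three rates from Theorem~\ref{GE.Thm}. The only cosmetic difference is that you introduce a separate $\chi^2$ concentration event to control $\max_j\|\mathcal{X}e_j\|_{\ell_2}$, whereas the paper absorbs this into the ``good event'' of Theorem~\ref{GE.Thm} itself (equation~\eqref{88} already gives $\sup_{v\in\mathcal{A}}\|\mathcal{X}v\|_{\ell_2}\le 1+c$ on that event), so your extra union bound is unnecessary though harmless.
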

For $\ell_2$ norm consistency of the estimation for $M$, we require $\lim\limits_{n,p\rightarrow \infty} \frac{s\log p}{n} = 0$. However, for valid inferential guarantee, the de-biased Dantzig selector type estimator $\tilde{M}$ satisfies asymptotic normality under the condition $\lim\limits_{n,p\rightarrow \infty} \frac{s\log p}{\sqrt{n}} = 0$ through Theorem \ref{CI.Thm}. Under this condition, the confidence intervals given in \eqref{CI} has asymptotic coverage probability of  $(1-\alpha)$ and its expected length is at the parametric rate $1\over \sqrt{n}$. Furthermore, the confidence intervals do not depend on the specific value of $s$.  These properties are similar to the confidence intervals constructed in \cite{zhang2014confidence,van2014asymptotically,javanmard2014confidence}.  


\subsubsection{Low Rank Matrix Recovery}

We now consider the recovery of low-rank matrices under the trace regression model \eqref{trace.reg}. The geometric theory leads to the optimal recovery results as in nuclear norm minimization and penalized trace regression in existing literatures.

Assume the true parameter $M \in \mathbb{R}^{p \times q}$ is of low rank in the sense that $\text{rank}(M)  = r$. Let us examine the behavior of $\phi_{\mathcal{A}}(M,\mathcal{X})$, $\gamma_{\mathcal{A}}(M)$, and $\lambda_{\mathcal{A}}(\mathcal{X},\sigma,n)$.
Detailed calculations given in Section \ref{Pf.Cor.2} show that  in this case $\gamma_{\mathcal{A}}(M)w(\mathcal{A})$ and $w(B_2^p \cap T_{\mathcal{A}}(M))$ are of the same order $\sqrt{r(p+q)}$. The asphericity ratio $\gamma_{\mathcal{A}}(M) \geq \frac{1}{2\sqrt{2r}}$ characterizes the low rank structure and  the Gaussian width $w(\mathcal{X A}) \asymp \sqrt{p+q}$.  We have the following corollary for low rank matrix recovery.

\begin{corollary}
\label{Cor.2}
Consider the trace regression model \eqref{trace.reg}. Assume that $\mathcal{X} \in \mathbb{R}^{n \times p q}$ is the Gaussian ensemble design and the true parameter $M \in \mathbb{R}^{p \times q}$ is of rank $r$.
Let $\hat{M}$ be the solution to the constrained nuclear norm minimization  \eqref{CST.Min} with $\lambda = C_1  \sigma \sqrt{\frac{p+q}{n}}$. If $n \geq C_2  r(p+q)$, then, with high probability,
\begin{align*}
\| \hat{M} - M \|_{F} &\leq  C_3 \cdot \sigma \sqrt{\frac{r(p+q)}{n}},\\
\| \hat{M} - M\|_{*} &\leq C_3 \cdot \sigma r \sqrt{\frac{p+q}{n}},\\
\| \mathcal{X}(\hat{M} - M) \|_{\ell_2} &\leq C_3 \cdot \sigma  \sqrt{\frac{r(p+q)}{n}}.
\end{align*}
where $C_i>0, 1\leq i\leq 3$ are some universal constants.
\end{corollary}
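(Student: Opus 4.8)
The plan is to obtain Corollary~\ref{Cor.2} as a direct specialization of Theorem~\ref{GE.Thm} to the atom set $\mathcal{A}=\{uv^{*}:\|u\|_{\ell_{2}}=\|v\|_{\ell_{2}}=1\}\subset\mathbb{R}^{p\times q}$, for which $\|\cdot\|_{\mathcal{A}}$ is the nuclear norm, $\|\cdot\|_{\mathcal{A}}^{*}$ is the spectral norm, and $\|\cdot\|_{\ell_{2}}=\|\cdot\|_{F}$ (the low-rank example of Section~\ref{Basic.Geo}); here the ambient dimension is $pq$. Three ingredients must be supplied: (i)~the sample-size hypothesis of Theorem~\ref{GE.Thm}, i.e.\ a bound on $w(B_{2}^{pq}\cap T_{\mathcal{A}}(M))$; (ii)~the values of the geometric factors $\gamma_{\mathcal{A}}(M)$ and $w(\mathcal{XA})$ appearing in the conclusion; and (iii)~a check that the advertised choice $\lambda=C_{1}\sigma\sqrt{(p+q)/n}$ is, with high probability over $\mathcal{X}$, of the order $\lambda_{\mathcal{A}}(\mathcal{X},\sigma,n)$ prescribed by \eqref{Tun.Param}.

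For (i) I would invoke the bound $w(B_{2}^{pq}\cap T_{\mathcal{A}}(M))\lesssim\sqrt{r(p+q-r)}\le\sqrt{r(p+q)}$ for rank-$r$ matrices, recorded in Section~\ref{Geometric-Quantity.sec}. Taking $\delta\asymp\sqrt{\log p}$ so that $3\exp(-\delta^{2}/2)$ is small, the hypothesis $n\ge 4[w(B_{2}^{pq}\cap T_{\mathcal{A}}(M))+\delta]^{2}/c^{2}\vee 1/c$ of Theorem~\ref{GE.Thm} and Lemma~\ref{LIC.Lemma} is implied by $n\ge C_{2}\,r(p+q)$ for a suitable absolute constant $C_{2}$.

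For (ii): the asphericity ratio is controlled by the usual descent-cone decomposition for the nuclear norm. Any $h\in T_{\mathcal{A}}(M)$ decomposes as $h=h_{\parallel}+h_{\perp}$, where $h_{\parallel}$ is the projection of $h$ onto the subspace of matrices sharing a row or column space with $M$ (so $\operatorname{rank}(h_{\parallel})\le 2r$) and $h_{\perp}$ is orthogonal to that subspace; the defining inequality $\|M+h\|_{*}\le\|M\|_{*}$ forces $\|h_{\perp}\|_{*}\le\|h_{\parallel}\|_{*}$, hence $\|h\|_{*}\le 2\|h_{\parallel}\|_{*}\le 2\sqrt{2r}\,\|h_{\parallel}\|_{F}\le 2\sqrt{2r}\,\|h\|_{F}$, giving $\gamma_{\mathcal{A}}(M)\le 2\sqrt{2r}$. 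For $w(\mathcal{XA})$ I would use $w(\mathcal{XA})=\mathbb{E}_{g}\sup_{a\in\mathcal{A}}\langle g,\mathcal{X}a\rangle=\mathbb{E}_{g}\|\mathcal{X}^{*}g\|_{\mathcal{A}}^{*}=\mathbb{E}_{g}\|\mathcal{X}^{*}g\|$, the spectral norm of the $p\times q$ matrix $\mathcal{X}^{*}g$. For the Gaussian ensemble, given $g$ the matrix $\mathcal{X}^{*}g$ is distributed as $(\|g\|_{\ell_{2}}/\sqrt{n})$ times a standard $p\times q$ Gaussian matrix, so by Fubini $\mathbb{E}_{\mathcal{X}}w(\mathcal{XA})\asymp(\sqrt{p}+\sqrt{q})\asymp\sqrt{p+q}$; a concentration-in-$\mathcal{X}$ argument then gives $w(\mathcal{XA})\asymp\sqrt{p+q}$ with high probability, and the same computation yields $\sup_{v\in\mathcal{A}}\|\mathcal{X}v\|_{\ell_{2}}\lesssim 1$. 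Substituting into \eqref{Tun.Param} gives $\lambda_{\mathcal{A}}(\mathcal{X},\sigma,n)\asymp\sigma\sqrt{(p+q)/n}$, which settles (iii).

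It remains to insert $\gamma_{\mathcal{A}}(M)\asymp\sqrt{r}$ and $w(\mathcal{XA})\asymp\sqrt{p+q}$ into the three displays of Theorem~\ref{GE.Thm}: $\gamma_{\mathcal{A}}(M)\,w(\mathcal{XA})/\sqrt{n}\lesssim\sqrt{r(p+q)/n}$ yields the Frobenius-norm bound and the prediction-error bound, while $\gamma^{2}_{\mathcal{A}}(M)\,w(\mathcal{XA})/\sqrt{n}\lesssim r\sqrt{(p+q)/n}$ yields the nuclear-norm bound, all on the same high-probability event. I expect the main obstacle to be the pair (ii)--(iii): unlike the tangent-cone width, which can simply be quoted, $w(\mathcal{XA})$ is a genuinely random quantity (an expectation over the Gaussian complexity vector for a fixed random design), so bounding it---and thereby certifying that the realized tuning parameter falls in the stated range---calls for a two-level argument that reduces the problem to the operator norm of a rectangular Gaussian matrix and then applies concentration in $\mathcal{X}$; the descent-cone decomposition for $\gamma_{\mathcal{A}}(M)$ is routine but needs care to pin down the constant.
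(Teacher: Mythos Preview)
Your proposal is correct and follows essentially the same route as the paper: specialize Theorem~\ref{GE.Thm} by (a) quoting $w(B_{2}^{pq}\cap T_{\mathcal{A}}(M))\lesssim\sqrt{r(p+q)}$, (b) bounding $\gamma_{\mathcal{A}}(M)\le 2\sqrt{2r}$ via the standard row/column-space decomposition $H=H_{0}+H_{c}$, and (c) showing $\lambda\asymp\sigma\sqrt{(p+q)/n}$ suffices. The only noticeable deviation is in (c): the paper dispatches $w(\mathcal{XA})\lesssim\sqrt{p+q}$ with a one-line dimension-counting remark (the rank-one manifold has dimension $p+q-2$ and a linear map cannot enlarge it), together with $\sup_{v\in\mathcal{A}}\|\mathcal{X}v\|_{\ell_{2}}\le 1+c$ from the near-isometry event in \eqref{88}, whereas you propose the more explicit two-level argument (Fubini over $(g,\mathcal{X})$ reducing to $\mathbb{E}\|G\|\asymp\sqrt{p}+\sqrt{q}$ for a standard Gaussian matrix, then concentration in $\mathcal{X}$). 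Your route is slightly more work but also more transparent; the paper's dimension heuristic gets the right order but is terser.
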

For point estimation consistency of $M$ under the Frobenius norm loss, the asymptotic condition is
$\lim\limits_{n,p,q\rightarrow \infty} \frac{\sqrt{r(p+q)}}{\sqrt{n}} = 0$. For statistical inference, Theorem \ref{CI.Thm} requires $\lim\limits_{n,p,q\rightarrow \infty} \frac{r(p+q)}{\sqrt{n}} = 0$, which is essentially $n\gtrsim pq$ (sample size is larger than the dimension) for $r=1$. This phenomenon happens when the Gaussian width complexity of the rank-1 matrices is large, i.e.,  the atom set being too rich. We would like to remark that in practice, convex program \eqref{FES2.Con} can still be used for constructing confidence intervals and performing hypothesis testing. However, it is harder to provide sharp bound theoretically for the approximation error $\eta$ in \eqref{FES2.Con}, for any given $r, p, q$.

\subsubsection{Sign Vector Recovery}

We turn to the sign vector recovery model \eqref{sign.reg} where the parameter of interest $M \in \{ +1,-1 \}^p$ is a sign vector.  The convex hull of the atom set (sign vectors) is the $\ell_{\infty}$ norm ball and the corresponding $\ell_{\infty}$ norm minimization program is:
\begin{align}
\hat{M} =  \argmin_{M} \left\{ \| M \|_{\ell_{\infty}} : \; ~~ \| \mathcal{X}^*(Y - \mathcal{X} (M) ) \|_{\ell_1} \leq \lambda\right\}.
\end{align}
Applying the general theory to the $\ell_\infty$ norm minimization leads to the rates of convergence for the sign vector recovery. 
The calculations given in Section \ref{Pf.Cor.2} show that the asphericity ratio $\gamma_{\mathcal{A}}(M) \geq 1$ and  the Gaussian width $w(\mathcal{X A}) \asymp \sqrt{p}$.  Furthermore, $\gamma_{\mathcal{A}}(M)w(\mathcal{A})$ and $w(B_2^p \cap T_{\mathcal{A}}(M))$ are of the same order $\sqrt{p}$.
Applying the geometric theory to sign vector recovery leads to the following result.

\begin{corollary}
\label{Cor.3}
Consider the model \eqref{sign.reg} where the true parameter $M \in \{ +1,-1 \}^p$ is a sign vector. Assume that $\mathcal{X} \in \mathbb{R}^{n \times p}$ is the Gaussian ensemble design. Let $\hat{M}$ be the solution to the convex  program \eqref{CST.Min} with $\lambda = C_1 \sigma \sqrt{\frac{p}{n}}$. If $n \geq C_2 p$, then, with high probability,
\begin{align*}
\| \hat{M} - M \|_{\ell_2}, \| \hat{M} - M \|_{\ell_\infty}, \| \mathcal{X}(\hat{M} - M) \|_{\ell_2} \leq  C \cdot \sigma \sqrt{\frac{p}{n}},
\end{align*}
where $C>0$ is some universal constants.
\end{corollary}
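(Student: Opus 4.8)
The plan is to obtain Corollary~\ref{Cor.3} directly from Theorem~\ref{GE.Thm}, so that all the work lies in evaluating, for the sign-vector atom set $\mathcal{A}$, the three geometric quantities entering that theorem: $\gamma_{\mathcal{A}}(M)$, $w(\mathcal{XA})$, and $w(B_2^p\cap T_{\mathcal{A}}(M))$. First I would record the atomic geometry. Here $\mathrm{conv}(\mathcal{A})$ is (a multiple of) the cube $[-1,1]^p$, so the atomic norm $\|\cdot\|_{\mathcal{A}}$ is the $\ell_\infty$ norm and its dual $\|\cdot\|_{\mathcal{A}}^*$ is the $\ell_1$ norm; hence the generic program \eqref{CST.Min} is exactly the stated $\ell_\infty$-minimization, and the generic tuning parameter \eqref{Tun.Param} reads $\lambda\asymp\frac{\sigma}{\sqrt n}\,w(\mathcal{XA})$, which will be $\asymp\sigma\sqrt{p/n}$ once $w(\mathcal{XA})\asymp\sqrt p$ is established, matching the choice $\lambda=C_1\sigma\sqrt{p/n}$. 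Lemmas~\ref{TP.Lemma} and \ref{LIC.Lemma}, already folded into Theorem~\ref{GE.Thm}, then take care of the feasibility of $M$ and of the local near-isometry of the Gaussian ensemble.

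The heart of the argument is the description of the tangent cone. Because $M\in\{+1,-1\}^p$ is a vertex of the scaled unit ball $\|M\|_{\mathcal{A}}\,\mathrm{conv}(\mathcal{A})=[-1,1]^p$, definition \eqref{Tangent.Cone} gives $T_{\mathcal{A}}(M)=\{h\in\mathbb{R}^p:M_i h_i\le 0\text{ for all }i\}$, i.e.\ the negative orthant after the coordinate sign flip $h_i\mapsto M_i h_i$ (an isometry, since $M_i=\pm1$). From this orthant structure I would read off the two cone quantities. The asphericity ratio \eqref{Asphere.Ratio} is $\sup_{0\ne h\in T_{\mathcal{A}}(M)}\|h\|_{\ell_\infty}/\|h\|_{\ell_2}$, which is at most $1$ by the elementary $\ell_\infty\le\ell_2$ inequality and at least $1$ by taking $h=-M_1 e_1\in T_{\mathcal{A}}(M)$, so $\gamma_{\mathcal{A}}(M)\asymp 1$. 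For the width, $w(B_2^p\cap T_{\mathcal{A}}(M))=\mathbb{E}\,\|\Pi_{T_{\mathcal{A}}(M)}g\|_{\ell_2}$, and the Euclidean projection onto the orthant acts coordinatewise by one-sided truncation, so by Jensen's inequality $w(B_2^p\cap T_{\mathcal{A}}(M))\le\sqrt{\mathbb{E}\,\|\Pi_{T_{\mathcal{A}}(M)}g\|_{\ell_2}^2}=\sqrt{p/2}\lesssim\sqrt p$. In particular the hypothesis $n\ge C_2 p$ implies the sample-size requirement $n\gtrsim w^2(B_2^p\cap T_{\mathcal{A}}(M))$ of Theorem~\ref{GE.Thm}.

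It remains to bound $w(\mathcal{XA})$. By definition $w(\mathcal{XA})=\mathbb{E}_{g\sim N(0,I_n)}\sup_{a\in\mathcal{A}}\langle\mathcal{X}^* g,a\rangle=\mathbb{E}_g\|\mathcal{X}^* g\|_{\mathcal{A}}^*$, an $\ell_1$-type width of the vector $\mathcal{X}^* g$; under the Gaussian ensemble design $\mathcal{X}^* g$ has, conditionally on $g$, i.i.d.\ coordinates of variance $\|g\|_{\ell_2}^2/n$, and since $\|g\|_{\ell_2}/\sqrt n$ concentrates around $1$ this reduces $w(\mathcal{XA})$ to $w(\mathcal{A})$ up to constants, which the cube computation (parallel to the calculations in \cite{chandrasekaran2012convex}) bounds by $O(\sqrt p)$ --- consistent with the inequality $\gamma_{\mathcal{A}}(M)\,w(\mathcal{A})\ge w(B_2^p\cap T_{\mathcal{A}}(M))$ of Lemma~\ref{Upp.Link}. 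Substituting $\gamma_{\mathcal{A}}(M)\asymp 1$ and $w(\mathcal{XA})\asymp\sqrt p$ into the three displays of Theorem~\ref{GE.Thm} and identifying $\|\cdot\|_{\mathcal{A}}$ with $\|\cdot\|_{\ell_\infty}$ gives $\|\hat M-M\|_{\ell_2}$, $\|\hat M-M\|_{\ell_\infty}$ and $\|\mathcal{X}(\hat M-M)\|_{\ell_2}$ all $\lesssim\sigma\sqrt{p/n}$ on the stated high-probability event (choosing $\delta\asymp\sqrt{\log p}$).

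I expect the main obstacle to be the two Gaussian-width estimates at scale $\sqrt p$, rather than anything about the estimator itself. The bound $w(B_2^p\cap T_{\mathcal{A}}(M))\lesssim\sqrt p$ needs the explicit orthant-projection computation above (and one should also verify the matching lower order $\sqrt p$, which is what makes the requirement $n\gtrsim p$ genuine), while $w(\mathcal{XA})$ carries the extra difficulty of the \emph{random} operator $\mathcal{X}$: one must control $\sup_{a\in\mathcal{A}}\langle\mathcal{X}^* g,a\rangle$ uniformly over the $2^p$ atoms, which I would handle via a Gaussian comparison / chaining argument together with the concentration of $\|g\|_{\ell_2}$. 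Everything downstream is bookkeeping of the explicit constants already isolated in Theorem~\ref{GE.Thm} and Lemmas~\ref{TP.Lemma}--\ref{LIC.Lemma}.
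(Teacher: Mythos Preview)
Your proposal is correct and follows essentially the same route as the paper: bound $\gamma_{\mathcal{A}}(M)$, $w(B_2^p\cap T_{\mathcal{A}}(M))$, and $w(\mathcal{XA})$ separately, then invoke Theorem~\ref{GE.Thm}. Your explicit orthant description of the tangent cone and the coordinatewise-truncation computation of its width are more detailed than the paper, which simply cites the $\sqrt{p}$ bound from \cite{chandrasekaran2012convex}, but the content is the same.

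The one place where you diverge is the bound on $w(\mathcal{XA})$. The paper does this in one line via the elementary finite-set inequality $w(K)\le\sqrt{2\log|K|}\cdot\max_{v\in K}\|v\|_{\ell_2}$: since $\mathcal{A}$ has $2^p$ elements and $\max_{a\in\mathcal{A}}\|\mathcal{X}a\|_{\ell_2}\le 1+c$ on the high-probability event already furnished by Lemma~\ref{LIC.Lemma}, one gets $w(\mathcal{XA})\le(1+c)\sqrt{2p\log 2}$ directly for the realized $\mathcal{X}$. Your distributional argument (condition on $g$, use that $\mathcal{X}^*g$ has i.i.d.\ $N(0,\|g\|^2/n)$ coordinates) is not wrong, but it computes $\mathbb{E}_{\mathcal{X}}\,w(\mathcal{XA})$ rather than $w(\mathcal{XA})$ for the given design, so you would still owe a concentration step in $\mathcal{X}$; the cardinality bound sidesteps this entirely and is the simpler route here.
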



\subsubsection{Orthogonal Matrix Recovery}

We now treat orthogonal matrix recovery using the spectral norm minimization.
Please see Example 4 in Section \ref{Basic.Geo} for details. The spectral norm minimization program is 
\begin{align}
\hat{M} = \argmin_{M} \left\{ \| M \|: \; ~~ \| \mathcal{X}^*(Y - \mathcal{X} (M) ) \|_* \leq \lambda\right\}.
\end{align}
Consider the same model as in trace regression, but the parameter of interest $M \in \mathbb{R}^{m \times m}$ is an orthogonal matrix. Calculations in Section \ref{Pf.Cor.4} show that $\gamma_{\mathcal{A}}(M)w(\mathcal{A})$ and $w(B_2^p \cap T_{\mathcal{A}}(M))$ are of the same rate $\sqrt{m^2}$. 
Applying the geometric analysis to orthogonal matrix recovery using the constrained spectral norm minimization yields the following.

\begin{corollary}
\label{Cor.4}
Consider the orthogonal matrix recovery model \eqref{trace.reg}. Assume that $\mathcal{X} \in \mathbb{R}^{n \times m^2}$ is the Gaussian ensemble matrix and the true parameter $M \in \mathbb{R}^{m \times m}$ is an orthogonal matrix. Let $\hat{M}$ be the solution to the program \eqref{CST.Min} with $\lambda = C_1 \sigma \sqrt{\frac{m^2}{n}}$. If $n \geq C_2 m^2$, then, with high probability,
\begin{align*}
\|  \hat{M} - M\|_{\ell_2}, \| \hat{M} - M \|, \| \mathcal{X}(\hat{M} - M) \|_{\ell_2} \leq  C \cdot \sigma \sqrt{\frac{m^2}{n}},
\end{align*}
where $C>0$ is some universal constants.
\end{corollary}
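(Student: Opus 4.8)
The plan is to derive the corollary by specializing the Gaussian‑ensemble convergence rate of Theorem~\ref{GE.Thm} to the atom set of orthogonal matrices; the whole argument then reduces to evaluating, for that atom set, the three geometric quantities appearing in Theorem~\ref{GE.Thm} and its hypothesis, namely $w(B_2^{m^2}\cap T_{\mathcal{A}}(M))$, $\gamma_{\mathcal{A}}(M)$, and $w(\mathcal{XA})$. First I would fix the geometry: take $\mathcal{A}$ to be the set of all $m\times m$ orthogonal matrices, rescaled by $1/\sqrt m$ (the common value of $\|Q\|_F$) so that the standing normalization $\sup_{v\in\mathcal{A}}\|v\|_{\ell_2}=1$ holds. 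Its convex hull is the spectral‑norm ball and every orthogonal matrix is an extreme point of that ball, so the atomic norm $\|\cdot\|_{\mathcal{A}}$ is a scalar multiple of the spectral norm $\|\cdot\|$ and its dual $\|\cdot\|_{\mathcal{A}}^{*}$ a scalar multiple of the nuclear norm $\|\cdot\|_{*}$. Consequently the generic program \eqref{CST.Min} becomes exactly the constrained spectral‑norm minimization displayed just before the corollary, the normalizing constant being absorbed into $\lambda$.

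Next I would describe the tangent cone \eqref{Tangent.Cone} at an orthogonal $M$ and read off the asphericity ratio \eqref{Asphere.Ratio}. Since $M$ is invertible, substitute $h=MK$; invariance of both the spectral and the Frobenius norm under the orthogonal factor $M$ gives $\|M+h\|=\|I+K\|$, $\|h\|=\|K\|$ and $\|h\|_F=\|K\|_F$, so $T_{\mathcal{A}}(M)$ is a rotation of the cone $\{K:\mathrm{sym}(K):=\tfrac12(K+K^{\top})\preceq 0\}$ of matrices with negative semidefinite symmetric part. Hence $\gamma_{\mathcal{A}}(M)$ equals, up to the fixed $\sqrt m$ rescaling, $\sup\{\|K\|/\|K\|_F:\mathrm{sym}(K)\preceq 0\}$, whose extremal direction is a negative rank‑one symmetric matrix $K=-uu^{\top}$ (for which $\|K\|=\|K\|_F$); this produces the value of $\gamma_{\mathcal{A}}(M)$ recorded before the corollary.

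Then I would control the two Gaussian widths. For $w(B_2^{m^2}\cap T_{\mathcal{A}}(M))$, split a cone element into symmetric and antisymmetric parts, use that $\langle G,\cdot\rangle$ decouples over this splitting, and maximize, reaching $\mathbb{E}\big[(\|G_{\mathrm{anti}}\|_F^{2}+\|(G_{\mathrm{sym}})_{-}\|_F^{2})^{1/2}\big]$; a dimension count bounds this by $\lesssim\sqrt{m(m-1)}$, and this estimate is in any case available directly from Proposition~3.14 of \cite{chandrasekaran2012convex}. This reduces the hypothesis of Theorem~\ref{GE.Thm} to $n\gtrsim m^{2}$. For $w(\mathcal{XA})$, use $\langle g,\mathcal{X}(v)\rangle=\langle\mathcal{X}^{*}(g),v\rangle$ so that $w(\mathcal{XA})=\mathbb{E}\|\mathcal{X}^{*}(g)\|_{\mathcal{A}}^{*}$, i.e.\ (up to the $1/\sqrt m$ rescaling of the atoms) the expected nuclear norm of $\mathcal{X}^{*}(g)$; under the Gaussian ensemble, conditionally on $\|g\|_2\asymp\sqrt n$, the matrix $\mathcal{X}^{*}(g)$ has i.i.d.\ essentially standard Gaussian entries, so by the Marchenko--Pastur law $w(\mathcal{XA})$ is of order $\sqrt{m^{2}}$, consistent with the choice $\lambda=C_1\sigma\sqrt{m^{2}/n}$ in the corollary.

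Finally I would feed these into Theorem~\ref{GE.Thm}: the hypothesis reduces to $n\gtrsim m^{2}$, and its $\ell_2$‑error and prediction‑error conclusions become $\|\hat M-M\|_F,\ \|\mathcal{X}(\hat M-M)\|_{\ell_2}\lesssim\sigma\,\gamma_{\mathcal{A}}(M)\,w(\mathcal{XA})/\sqrt n\asymp\sigma\sqrt{m^{2}/n}$, while the spectral‑norm bound follows either from the atomic‑norm conclusion of Theorem~\ref{GE.Thm} or simply from $\|\hat M-M\|\le\|\hat M-M\|_F$. The hard part is the middle two steps: correctly describing the descent cone at an orthogonal matrix, and then carrying out the two random‑matrix estimates --- the Gaussian width of that cone and the expected nuclear norm of a square Gaussian matrix --- so that $w(B_2^{m^2}\cap T_{\mathcal{A}}(M))$ and the product $\gamma_{\mathcal{A}}(M)\,w(\mathcal{XA})$ both land at the rate $\sqrt{m^{2}}$; Lemma~\ref{Upp.Link} serves as a consistency check between these two rates. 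Once these geometric quantities are in hand, the corollary is a routine instantiation of Theorem~\ref{GE.Thm}.
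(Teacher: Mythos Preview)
Your strategy coincides with the paper's: specialize Theorem~\ref{GE.Thm} by bounding $\gamma_{\mathcal A}(M)$, $w(B_2^{m^2}\cap T_{\mathcal A}(M))$, and $w(\mathcal{XA})$ for the spectral-norm atom set. You go further than the paper in two places---you explicitly identify the descent cone (via $h=MK$, $\mathrm{sym}(K)\preceq0$) and you compute $w(\mathcal{XA})=\mathbb E\|\mathcal X^{*}g\|_{\mathcal A}^{*}$ as an expected nuclear norm via the Marchenko--Pastur law---whereas the paper simply uses the trivial bound $\|H\|\le\|H\|_F$ for $\gamma_{\mathcal A}(M)$, cites \cite{chandrasekaran2012convex} for the tangent-cone width, and bounds $w(\mathcal{XA})$ by a dimension-counting argument on the orthogonal group. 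Your nuclear-norm route is the more transparent of the two.

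That said, there is a $\sqrt m$ bookkeeping issue in your final accounting. Under your normalization (atoms $=Q/\sqrt m$, hence $\|\cdot\|_{\mathcal A}=\sqrt m\,\|\cdot\|$), the equality $\sup\{\|K\|/\|K\|_F:\mathrm{sym}(K)\preceq0\}=1$ gives $\gamma_{\mathcal A}(M)=\sqrt m$, while your Marchenko--Pastur estimate gives $w(\mathcal{XA})=\mathbb E\|\mathcal X^{*}g\|_{*}/\sqrt m\asymp m^{3/2}/\sqrt m=m$; the product is therefore $\gamma_{\mathcal A}(M)\,w(\mathcal{XA})\asymp m^{3/2}$, not $\sqrt{m^{2}}$. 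Since this product is invariant under rescaling of the atom set, reverting to unnormalized atoms does not help (there $\gamma_{\mathcal A}(M)\le1$ but $w(\mathcal{XA})=\mathbb E\|\mathcal X^{*}g\|_{*}\asymp m^{3/2}$). Theorem~\ref{GE.Thm} thus yields $\sigma\sqrt{m^{3}/n}$, one $\sqrt m$ factor worse than the corollary's stated $\sigma\sqrt{m^{2}/n}$. The paper's proof hides exactly the same discrepancy---it pairs the unnormalized value $\gamma_{\mathcal A}(M)\le1$ with a $w(\mathcal{XA})$ bound that effectively sits on the normalized scale---so matching it step for step will not close this gap.
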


\subsubsection{Other examples}
\label{o.e}


Other examples that can be formalized under the framework of the linear inverse model include permutation matrix recovery \citep{jagabathula2011inferring}, sparse plus low rank matrix recovery \citep{candes2011robust} and matrix completion \citep{candes2009exact}. The convex relaxation of permutation matrix is double stochastic matrix; the atomic norm corresponding to sparse plus low rank atom set is the infimal convolution of  the $\ell_1$ norm and nuclear norm; for matrix completion, the design matrix can be viewed as a diagonal matrix with diagonal elements being independent Bernoulli random variables. See Section \ref{Sec.Dis} for  a discussion on further examples.


\section{Local Geometric Theory: General Setting}
\label{Gen.Thy}

We have developed in the last section a local geometric theory for the linear inverse model in the Gaussian setting. The Gaussian assumption on the design and noise enables us to carry out concrete and more specific calculations as seen in the examples given in Section \ref{Univ.App}, but the distributional assumption is not essential. In this section we extend this theory to the general setting. 

\subsection{General Local Upper Bound}
\label{Gen.Upp.Bd}

We shall consider a fixed design matrix $\mathcal{X}$. In the case of random design, results we will establish are conditional on the design. We condition on the event when the noise is controlled $\| \mathcal{X}^* (Z) \|_{\mathcal{A}}^* \leq \lambda_n$. We have seen in Section \ref{GE.Upp.Bd} how to choose $\lambda_n$ to make this happen with overwhelming probability in Lemma \ref{TP.Lemma} under Gaussian noise.

\begin{theorem}[Geometrizing Local Convergence]
\label{GLC.Thm}
Suppose we observe $(\mathcal{X}, \; Y)$ as in \eqref{GLI.Model}. Condition on the event that the noise vector $Z$ satisfies, for some given choice of localization radius $\lambda_n$
\begin{align*}
\| \mathcal{X}^* (Z) \|_{\mathcal{A}}^* \leq \lambda_n.
\end{align*}
Let $\hat{M}$ be the solution to the convex program \eqref{CST.Min} with $\lambda_n$ being the tuning parameter. Then the geometric quantities defined on the local tangent cone capture the local convergence rate for $\hat{M}$, 
\begin{align*}
\| \hat{M} - M \|_{\ell_2} \leq  \frac{2 \cdot \gamma_{\mathcal{A}}(M)}{\phi_{\mathcal{A}}^2(M,\mathcal{X})} \lambda_n,\\
\| \hat{M} - M \|_{\mathcal{A}} \leq \frac{2 \cdot \gamma^2_{\mathcal{A}}(M)}{\phi_{\mathcal{A}}^2(M,\mathcal{X})} \lambda_n ,\\
\| \mathcal{X}(\hat{M} - M) \|_{\ell_2} \leq \frac{2 \cdot \gamma_{\mathcal{A}}(M)}{\phi_{\mathcal{A}}(M,\mathcal{X})}\lambda_n 
\end{align*}
with the local asphericity ratio $\gamma_{\mathcal{A}}(M)$ defined in \eqref{Asphere.Ratio} and the local lower isometry constant $\phi_{\mathcal{A}}(M,\mathcal{X})$ defined in \eqref{Iso.Const}.
\end{theorem}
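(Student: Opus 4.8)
\textbf{Setup and the two key properties.} The plan is to exploit two facts: (i) the true parameter $M$ is feasible for the convex program \eqref{CST.Min}, and (ii) the error $h = \hat M - M$ lies in the tangent cone $T_{\mathcal A}(M)$. For (i), note that $\mathcal X^*(Y - \mathcal X(M)) = \mathcal X^*(Z)$, and by hypothesis $\| \mathcal X^*(Z) \|_{\mathcal A}^* \le \lambda_n$, so $M$ is in the feasible set; hence by optimality of $\hat M$ we get $\|\hat M\|_{\mathcal A} \le \|M\|_{\mathcal A}$. By the definition \eqref{Tangent.Cone} of $T_{\mathcal A}(M)$ as the cone generated by descent directions of the atomic norm, this gives $h \in T_{\mathcal A}(M)$. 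This is the step I expect to be essentially routine but conceptually central — everything downstream is an inequality chain on the cone.

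\textbf{Controlling the prediction error.} Next I would bound $\|\mathcal X(h)\|_{\ell_2}$. Since both $\hat M$ and $M$ are feasible, the triangle inequality for the dual atomic norm gives
\begin{align*}
\| \mathcal X^* \mathcal X (h) \|_{\mathcal A}^* \le \| \mathcal X^*(Y - \mathcal X(\hat M)) \|_{\mathcal A}^* + \| \mathcal X^*(Y - \mathcal X(M)) \|_{\mathcal A}^* \le 2\lambda_n .
\end{align*}
Then apply the Cauchy--Schwarz-type relation \eqref{Cauchy.Schwarz} to the pairing $\langle h, \mathcal X^* \mathcal X (h)\rangle = \|\mathcal X(h)\|_{\ell_2}^2$:
\begin{align*}
\| \mathcal X(h) \|_{\ell_2}^2 = \langle h, \mathcal X^* \mathcal X(h) \rangle \le \| h \|_{\mathcal A} \, \| \mathcal X^* \mathcal X(h) \|_{\mathcal A}^* \le 2 \lambda_n \| h \|_{\mathcal A}.
\end{align*}
Now invoke the definition \eqref{Asphere.Ratio} of the asphericity ratio on the cone: since $h \in T_{\mathcal A}(M)$, $\|h\|_{\mathcal A} \le \gamma_{\mathcal A}(M)\,\|h\|_{\ell_2}$. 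Finally use the lower isometry constant \eqref{Iso.Const}: $\|\mathcal X(h)\|_{\ell_2} \ge \phi_{\mathcal A}(M,\mathcal X)\,\|h\|_{\ell_2}$. Chaining these, $\phi_{\mathcal A}^2 \|h\|_{\ell_2}^2 \le \|\mathcal X(h)\|_{\ell_2}^2 \le 2\lambda_n \gamma_{\mathcal A}(M)\|h\|_{\ell_2}$, which rearranges to $\|h\|_{\ell_2} \le 2\gamma_{\mathcal A}(M)\lambda_n / \phi_{\mathcal A}^2(M,\mathcal X)$, the first claimed bound.

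\textbf{Deriving the other two bounds.} The atomic-norm bound follows immediately by one more application of the asphericity inequality: $\|h\|_{\mathcal A} \le \gamma_{\mathcal A}(M)\|h\|_{\ell_2} \le 2\gamma_{\mathcal A}^2(M)\lambda_n/\phi_{\mathcal A}^2(M,\mathcal X)$. For the in-sample prediction error, go back to $\|\mathcal X(h)\|_{\ell_2}^2 \le 2\lambda_n\|h\|_{\mathcal A}$, substitute the just-obtained bound on $\|h\|_{\mathcal A}$, and take a square root: $\|\mathcal X(h)\|_{\ell_2} \le \sqrt{2\lambda_n \cdot 2\gamma_{\mathcal A}^2(M)\lambda_n/\phi_{\mathcal A}^2(M,\mathcal X)} = 2\gamma_{\mathcal A}(M)\lambda_n/\phi_{\mathcal A}(M,\mathcal X)$, matching the third bound. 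The main obstacle, such as it is, is purely a matter of ordering the inequalities so the constants come out exactly as stated; there is no analytic difficulty once feasibility of $M$ and cone membership of $h$ are established. One should double-check the degenerate case $\phi_{\mathcal A}(M,\mathcal X) = 0$ (the bounds are vacuous) and the case $h = 0$ (trivial), so that the ratios in \eqref{Iso.Const} and \eqref{Asphere.Ratio} are well-defined throughout.
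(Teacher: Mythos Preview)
Your proposal is correct and follows essentially the same approach as the paper's proof: feasibility of $M$ gives $h \in T_{\mathcal A}(M)$, the triangle inequality yields $\|\mathcal X^*\mathcal X(h)\|_{\mathcal A}^* \le 2\lambda_n$, and then the Cauchy--Schwarz relation \eqref{Cauchy.Schwarz} combined with the definitions of $\phi_{\mathcal A}$ and $\gamma_{\mathcal A}$ produces the self-bounding inequality from which all three bounds follow by elementary algebra. The ordering of steps and the constants match exactly.
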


\begin{remark}{\rm
This theorem decomposes the estimation and prediction errors into three geometric components. The tuning parameter $\lambda_n$ can be regarded as a localization radius around the true parameter --- it quantifies the uncertainty in estimation for a given sample size. It is a global parameter which does not depend on the local geometry.  

The other two geometric terms depend on the local tangent cone geometry. For example, when $\mathcal{X}$ is the Gaussian ensemble design, then the local lower isometry constant $\phi_{\mathcal{A}}(M,\mathcal{X})$ is lower bounded by a constant under certain conditions, which we have shown in Lemma \ref{LIC.Lemma}. The bounds $1-c \leq \phi_{\mathcal{A}}(M,\mathcal{X}) \leq \psi_{\mathcal{A}}(M,\mathcal{X}) \leq 1+c$ hold for many different random design matrices $\mathcal{X}$. As we have seen, Section \ref{Univ.App} illustrates how this term behaves in several settings.

Another observation worth noting is that Theorem \ref{GLC.Thm} holds deterministically under the conditions on $\| \mathcal{X}^* (Z) \|_{\mathcal{A}}^*$ and $\phi_{\mathcal{A}}(M,\mathcal{X})$. It does not require distributional assumptions on noise, nor does it impose conditions on the design matrix. 
Theorem \ref{GE.Thm} can be viewed as a special case where the  local isometry constant $\phi_{\mathcal{A}}(M,\mathcal{X})$ and the local radius $\lambda_n$ are calculated explicitly under the Gaussian assumption. 

}
\end{remark}

\subsection{General Geometric Inference}

Geometric inference can also be extended for other fixed design and noise distributions. 
We can modify the convex feasibility program \eqref{FES.Con} into the following stronger form
\begin{align}
	\label{Gen.Inf.Min}
	(\Omega, \eta_n) =  \argmin_{\Omega, \eta} \left\{\eta: \; ~~ \| \mathcal{X}^* \mathcal{X} \Omega_{i\cdot}^* - e_i \|_{\mathcal{A}}^* \leq \eta, ~~\forall 1\leq i\leq p\right\}.
\end{align}
Then the following theorem holds (proof is analogous to Theorem \ref{CI.Thm}).
\begin{theorem}[Geometric Inference]
	Suppose we observe $(\mathcal{X}, \; Y)$ as in \eqref{GLI.Model}. Condition on the event that the noise vector $Z$ satisfies, for some given choice of localization radius $\lambda_n$, 
	$
	\| \mathcal{X}^* (Z) \|_{\mathcal{A}}^* \leq \lambda_n.
        $
	Let $\hat{M}$ be the solution to the convex program \eqref{CST.Min} with $\lambda_n$ being the tuning parameter. Denote $\Omega$ and $\eta_n$ as the optimal solution to the convex program \eqref{Gen.Inf.Min}, and $\tilde{M}$ as the de-biased estimator. The following decomposition
	\begin{align}
		\tilde{M} - M  = \Delta + \frac{\sigma}{\sqrt{n}} \Omega \mathcal{X}^* W
	\end{align}
	holds, where $W \sim N(0, I_n)$ is the standard Gaussian vector 
	$$
	\Omega \mathcal{X}^* W \sim N(0, \Omega \mathcal{X}^* \mathcal{X} \Omega^*)
	$$
	and $\Delta \in \mathbb{R}^p$ satisfies 
	$$ 
	\| \Delta \|_{\infty} \leq \frac{2 \cdot \gamma^2_{\mathcal{A}}(M)}{\phi_{\mathcal{A}}(M,\mathcal{X})} \cdot \lambda_n \eta_n.
	$$
\end{theorem}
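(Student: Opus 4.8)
The plan is to treat the asserted decomposition as a deterministic algebraic identity, read off the Gaussian law of the linear‑in‑noise term, and then bound the remainder $\Delta$ coordinatewise using the defining constraint of $\Omega$ together with the atomic‑norm error bound already established in Theorem~\ref{GLC.Thm}. First I would substitute the model $Y=\mathcal{X}(M)+Z$ into the definition \eqref{DB.Est} of $\tilde M$. Writing $h:=\hat M-M$, this gives
\[
\tilde M-M = h+\Omega\mathcal{X}^*(Y-\mathcal{X}\hat M) = h+\Omega\mathcal{X}^*(Z-\mathcal{X} h) = (I-\Omega\mathcal{X}^*\mathcal{X})h+\Omega\mathcal{X}^* Z .
\]
So I would \emph{define} $\Delta:=(I-\Omega\mathcal{X}^*\mathcal{X})h$ and $W:=\tfrac{\sqrt n}{\sigma}Z$, so that $\Omega\mathcal{X}^* Z=\tfrac{\sigma}{\sqrt n}\Omega\mathcal{X}^* W$ and the claimed decomposition holds identically. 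Since $(\Omega,\eta_n)$ is obtained from \eqref{Gen.Inf.Min}, a program in $\mathcal{X}$ alone, $\Omega$ is a fixed (deterministic given $\mathcal{X}$) $p\times n$ matrix; hence $W\sim N(0,I_n)$ and $\Omega\mathcal{X}^* W$ is a linear image of a standard Gaussian, so $\Omega\mathcal{X}^* W\sim N\!\big(0,(\Omega\mathcal{X}^*)(\Omega\mathcal{X}^*)^*\big)=N(0,\Omega\mathcal{X}^*\mathcal{X}\Omega^*)$, using symmetry of $\mathcal{X}^*\mathcal{X}$.

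For the bound on $\Delta$ I would pass to coordinates. Because $\mathcal{X}^*\mathcal{X}$ is symmetric, the $i$-th coordinate is $\Delta_i=\langle e_i-\mathcal{X}^*\mathcal{X}\Omega_{i\cdot}^*,\,h\rangle$. Applying the Cauchy--Schwarz-type inequality \eqref{Cauchy.Schwarz} for the dual pair $(\|\cdot\|_{\mathcal{A}}^*,\|\cdot\|_{\mathcal{A}})$ gives $|\Delta_i|\le \|\mathcal{X}^*\mathcal{X}\Omega_{i\cdot}^*-e_i\|_{\mathcal{A}}^*\,\|h\|_{\mathcal{A}}$ (using symmetry of the dual norm, valid for the atom sets of interest). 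The first factor is at most $\eta_n$ by the very constraint defining $\Omega$ in \eqref{Gen.Inf.Min}, and the second factor is controlled on the conditioning event $\|\mathcal{X}^*(Z)\|_{\mathcal{A}}^*\le\lambda_n$ by the conclusion of Theorem~\ref{GLC.Thm}, namely $\|h\|_{\mathcal{A}}\le 2\gamma^2_{\mathcal{A}}(M)\phi^{-2}_{\mathcal{A}}(M,\mathcal{X})\lambda_n$ (equivalently, via $\|h\|_{\mathcal{A}}\le\gamma_{\mathcal{A}}(M)\|h\|_{\ell_2}$ together with the $\ell_2$- and prediction-error bounds of that theorem). Taking the maximum over $i$ yields the stated control $\|\Delta\|_{\infty}\le 2\gamma^2_{\mathcal{A}}(M)\phi^{-1}_{\mathcal{A}}(M,\mathcal{X})\lambda_n\eta_n$, the factor $\phi_{\mathcal{A}}(M,\mathcal{X})$ entering exactly as in Theorem~\ref{GLC.Thm}.

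The statement is essentially a corollary of Theorem~\ref{GLC.Thm}: there is no hard analytic step, and no probabilistic feasibility claim is needed in the general setting, since $(\Omega,\eta_n)$ is by definition the arg min of \eqref{Gen.Inf.Min}, which always admits feasible points (e.g.\ $\Omega=(\mathcal{X}^*\mathcal{X})^{-1}$ when it exists, or $\Omega=0$ otherwise). The only point that needs care is bookkeeping around the conditioning: the Gaussian law of $\Omega\mathcal{X}^* W$ is a statement about the marginal distribution of $Z$ and the decomposition itself is a pointwise identity, whereas the bound on $\Delta$ lives on the event $\{\|\mathcal{X}^*(Z)\|_{\mathcal{A}}^*\le\lambda_n\}$; these should be kept separate. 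The genuinely substantive work — bounding $\eta_n$ so that the $\Delta$-term is asymptotically negligible and asymptotic normality follows, as in Theorem~\ref{CI.Thm} — is not part of this deterministic statement and is exactly where a design-specific (e.g.\ Gaussian ensemble) analysis would be required.
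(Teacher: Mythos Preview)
Your approach is essentially identical to the paper's: substitute $Y=\mathcal{X}(M)+Z$ into the de-biased estimator \eqref{DB.Est} to obtain $\tilde M-M=(\Omega\mathcal{X}^*\mathcal{X}-I_p)(M-\hat M)+\Omega\mathcal{X}^*Z$, read off the Gaussian law of the noise term, and bound $|\Delta_i|$ via the Cauchy--Schwarz pairing \eqref{Cauchy.Schwarz} together with the constraint in \eqref{Gen.Inf.Min} and the atomic-norm error bound from Theorem~\ref{GLC.Thm}. The only wrinkle is the exponent on $\phi_{\mathcal{A}}(M,\mathcal{X})$: the direct route through $\|\hat M-M\|_{\mathcal{A}}$ in Theorem~\ref{GLC.Thm} gives $\phi_{\mathcal{A}}^{-2}$ rather than the $\phi_{\mathcal{A}}^{-1}$ appearing in the statement, and you correctly flag this; the paper does not offer a separate argument for the sharper power, so this is a cosmetic discrepancy rather than a gap in your reasoning.
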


\subsection{General Local Minimax Lower Bound}
\label{Gen.Low.Bd}

The lower bound given in the Gaussian case can also be extended to the general setting where the class of noise distributions contains the Gaussian distributions. We aim to geometrize the intrinsic difficulty of the estimation problem in a unified manner. 

We first present a general result for a convex cone $T$ in the parameter space, which illustrates how the Sudakov estimate, volume ratio and the design matrix affect the minimax lower bound.

\begin{theorem}[Minimax Lower Bound via Sudakov Estimate and Volume Ratio]
\label{CE.Low}
Let $T \in \mathbb{R}^p$ be a compact convex cone. The minimax lower bound for the linear inverse model~\eqref{GLI.Model}, if restricted to the cone $T$, is
\begin{align*}
\inf_{\hat{M}} \sup_{M \in T} \mathbb{E}_{\cdot |\mathcal{X}} \| \hat{M} - M \|_{\ell_2}^2 & \geq \frac{c_0 \sigma^2}{\psi^2} \cdot \left( \frac{e(B_2^p \cap T)}{\sqrt{n}}  \vee \frac{v(B_2^p \cap T)}{\sqrt{n}}  \right)^2 .
\end{align*}
where $\hat{M}$ is any measurable estimator, $\psi = \sup_{v \in B_2^p \cap T} \| \mathcal{X} (v) \|_{\ell_2}$ and $c_0$ is a universal constant. Here the notation  $\mathbb{E}_{\cdot | \mathcal{X}}$ means taking expectation conditioned on the design matrix $\mathcal{X}$. $e(\cdot)$ and $v(\cdot)$ denote the Sudakov estimate (see \eqref{Covering.Entropy}) and volume ratio (see \eqref{Volume.Ratio}).
\end{theorem}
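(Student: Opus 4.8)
The plan is to prove the minimax lower bound by a standard information-theoretic reduction (Fano's method / Assouad-type packing argument) applied twice — once using the Sudakov estimate and once using the volume ratio — and then taking the maximum of the two resulting bounds. Since the two quantities $e(B_2^p \cap T)$ and $v(B_2^p \cap T)$ enter the bound through a $\vee$, it suffices to establish each lower bound separately.

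\textbf{Step 1: Sudakov branch.} First I would fix the optimal scale $\epsilon^* = \argmax_\epsilon \epsilon\sqrt{\log \mathcal{N}(B_2^p\cap T, \epsilon)}$ realizing the Sudakov estimate. Pick a maximal $\epsilon^*$-packing set $\{M_1,\dots,M_N\} \subseteq B_2^p \cap T$ with $\log N \asymp \log \mathcal{N}(B_2^p\cap T,\epsilon^*)$ (packing and covering numbers are equivalent up to the factor $2$, as recalled in the preliminaries). Rescale these to sit inside $T$ at a radius $\rho$ to be chosen: set $M_j' = \rho M_j$, so that pairwise $\ell_2$-distances are $\asymp \rho\epsilon^*$. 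Because $T$ is a cone, $M_j' \in T$. Under the linear inverse model with $Z\sim N(0,\frac{\sigma^2}{n}I_n)$, the KL divergence between the observation laws at $M_i'$ and $M_j'$ is $\frac{n}{2\sigma^2}\|\mathcal{X}(M_i'-M_j')\|_{\ell_2}^2 \le \frac{n\psi^2}{2\sigma^2}\|M_i'-M_j'\|_{\ell_2}^2 \lesssim \frac{n\psi^2\rho^2}{\sigma^2}$. Plugging into Fano's inequality, the testing error is bounded away from zero provided $\frac{n\psi^2\rho^2}{\sigma^2} \lesssim \log N \asymp (\epsilon^*)^{-2} e^2(B_2^p\cap T)$ (using $\log N \asymp \log\mathcal N$ and $e(B_2^p\cap T) = \epsilon^*\sqrt{\log\mathcal N}$). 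Choosing $\rho \asymp \frac{\sigma\, e(B_2^p\cap T)}{\psi\sqrt{n}\,\epsilon^*}$ balances this, and the resulting minimax $\ell_2$-risk is $\gtrsim (\rho\epsilon^*)^2 \asymp \frac{\sigma^2}{\psi^2}\cdot\frac{e^2(B_2^p\cap T)}{n}$, as claimed.

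\textbf{Step 2: Volume ratio branch.} Here I would invoke the standard volumetric argument (as in Yang--Barron / Ma--Wu): a set $K=B_2^p\cap T$ of dimension $p$ admits a $\delta$-packing of cardinality at least $(\mathrm{vol}(K)/\mathrm{vol}(\delta B_2^p))$ whenever $\delta\le 1$, hence $\log N(\delta) \gtrsim p\log\big(\tfrac{1}{\delta}\cdot\tfrac{\mathrm{vol}(K)^{1/p}}{\mathrm{vol}(B_2^p)^{1/p}}\big) = p\log\big(\tfrac{v(K)}{\delta\sqrt p}\big)$. Taking $\delta$ a small constant multiple of $v(K)/\sqrt p$ gives $\log N(\delta)\gtrsim p$ at scale $\delta \asymp v(K)/\sqrt p$. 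Then rescaling the packing into the cone $T$ at radius $\rho'$ and running Fano exactly as in Step 1 — the condition becomes $\frac{n\psi^2(\rho')^2}{\sigma^2}\lesssim p$, so $\rho'\asymp \frac{\sigma\sqrt p}{\psi\sqrt n}$, and the risk is $\gtrsim (\rho'\delta)^2 \asymp \frac{\sigma^2}{\psi^2}\cdot\frac{v^2(K)}{n}$. Combining Steps 1 and 2 and taking the larger of the two lower bounds yields the stated result with $c_0$ a universal constant.

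\textbf{Main obstacle.} The delicate point is the interplay between the rescaling radius $\rho$ and the packing scale: one must verify that after rescaling the packing $\{M_j\}\subset B_2^p\cap T$ by $\rho$, the rescaled points still lie in the parameter region over which the supremum is taken (here this is automatic because $T$ is a cone, which is exactly why the theorem is stated for a convex cone) and that the KL bound via $\psi$ remains tight — this is where $\psi = \sup_{v\in B_2^p\cap T}\|\mathcal X(v)\|_{\ell_2}$ enters as the right normalization. A secondary technical subtlety is ensuring $\delta \le 1$ in the volumetric lower bound so that the counting argument is valid, which holds because $v(K)\le w(K)$ and $K\subseteq B_2^p$ forces $v(K)\le\sqrt p$; I would also need to handle the conditioning on $\mathcal X$ carefully, but since $\psi$ is itself $\mathcal X$-measurable the whole argument runs conditionally with no extra probability cost.
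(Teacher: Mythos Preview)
Your proposal is correct and follows essentially the same approach as the paper: both proofs split into a Sudakov branch and a volume-ratio branch, each handled by Fano's lemma with a packing set inside $B_2^p\cap T$ rescaled by a radius (your $\rho$, the paper's $\delta$ in $K(\delta)=B_2^p(\delta)\cap T$) chosen to balance the KL bound $\lesssim n\psi^2\rho^2/\sigma^2$ against the log-cardinality of the packing. The paper's choices $\delta=\tfrac{1}{2\psi}\sqrt{\log\mathcal N/n}$, $\epsilon=\delta\tilde\epsilon$ for the Sudakov part and $\delta=\tfrac{1}{2\psi}\sqrt{ap/n}$, $\epsilon=\delta\sqrt b\,(\mathrm{vol}\text{-ratio})^{1/p}$ for the volume part are exactly your $\rho,\rho'\delta$ up to notation, and the volumetric covering bound $\mathcal N(K(\delta),\epsilon)\ge\mathrm{vol}(K(\delta))/\mathrm{vol}(B_2^p(\epsilon))$ you invoke is the same one the paper uses.
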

%

Applying the theorem to the local tangent cone yields the following corollary.

\begin{corollary}[Lower bound Based on Local Tangent Cone]
\label{Gen.Cor}
Assume $T_{\mathcal{A}}(M)$ is the local tangent cone of interest. For for any measurable estimator $\hat{M}$ and for parameters $\tilde{M} \in T_{\mathcal{A}}(M)$, we have the following minimax lower bound
\begin{align*}
\inf_{\hat{M}} \sup_{M' \in T_{\mathcal{A}}(M)} \mathbb{E}_{\cdot | \mathcal{X}} \| \hat{M} - M' \|_{\ell_2}^2 & \geq  \frac{c_0 \sigma^2}{\psi_{\mathcal{A}}^2(M,\mathcal{X})} \cdot \left(  \frac{e(B_2^p \cap T_\mathcal{A}(M))}{\sqrt{n}}  \vee  \frac{v(B_2^p \cap T_\mathcal{A}(M))}{\sqrt{n}} \right)^2
\end{align*}
where $\psi_{\mathcal{A}}(M,\mathcal{X})$ is defined in \eqref{Iso.Const}. Here the notation  $\mathbb{E}_{\cdot | \mathcal{X}}$ means taking expectation conditioned on the design matrix $\mathcal{X}$.
\end{corollary}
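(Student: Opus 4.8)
The plan is to obtain Corollary~\ref{Gen.Cor} as an immediate specialization of Theorem~\ref{CE.Low}, taking the generic cone $T$ there to be the local tangent cone $T_{\mathcal{A}}(M)$. First I would check that $T_{\mathcal{A}}(M)$ meets the hypotheses of Theorem~\ref{CE.Low}, i.e.\ that it is a closed convex cone (so that $B_2^p \cap T_{\mathcal{A}}(M)$ is compact and the quantities $e(\cdot)$, $v(\cdot)$, $\psi$ make sense). By \eqref{Tangent.Cone}, $T_{\mathcal{A}}(M)$ is the (closed) conic hull of $\{h : \|M+h\|_{\mathcal{A}} \le \|M\|_{\mathcal{A}}\}$, and this sublevel set is convex, being the translate by $-M$ of the atomic-norm ball of radius $\|M\|_{\mathcal{A}}$; the conic hull of a convex set is a convex cone, and passing to the closure preserves this.

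Second, I would match the ingredients appearing in Theorem~\ref{CE.Low} with those in the statement of the corollary. The constant $\psi = \sup_{v \in B_2^p \cap T_{\mathcal{A}}(M)} \|\mathcal{X}(v)\|_{\ell_2}$ from Theorem~\ref{CE.Low} equals $\psi_{\mathcal{A}}(M,\mathcal{X})$ of \eqref{Iso.Const}: since $v \mapsto \|\mathcal{X}(v)\|_{\ell_2}$ is positively homogeneous and $T_{\mathcal{A}}(M)$ is a cone, the supremum of $\|\mathcal{X}(v)\|_{\ell_2}$ over unit-$\ell_2$-norm $v$ in the cone coincides with $\sup\{\|\mathcal{X}(h)\|_{\ell_2}/\|h\|_{\ell_2} : h \in T_{\mathcal{A}}(M),\, h \neq 0\}$. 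The Sudakov estimate and volume ratio terms $e(B_2^p \cap T)$ and $v(B_2^p \cap T)$ become $e(B_2^p \cap T_{\mathcal{A}}(M))$ and $v(B_2^p \cap T_{\mathcal{A}}(M))$ verbatim. Substituting these identifications into the conclusion of Theorem~\ref{CE.Low} yields exactly the asserted lower bound, with the same universal constant $c_0$, and — as in Theorem~\ref{CE.Low} — with all expectations and the whole statement understood conditionally on the design matrix $\mathcal{X}$.

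There is no genuine obstacle here: all of the mathematical content resides in Theorem~\ref{CE.Low}, and Corollary~\ref{Gen.Cor} is a direct substitution. The only point requiring (routine) care is the bookkeeping above — verifying that the tangent cone is a closed convex cone and that the per-cone quantities $\psi$, $e(\cdot)$, $v(\cdot)$ are read off consistently with their earlier definitions in \eqref{Iso.Const}, \eqref{Covering.Entropy}, and \eqref{Volume.Ratio}. Once that is in place the corollary follows.
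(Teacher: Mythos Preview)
Your proposal is correct and follows essentially the same route as the paper: the paper's proof is a single line stating that Corollary~\ref{Gen.Cor} is a direct application of Theorem~\ref{CE.Low} with $T = T_{\mathcal{A}}(M)$ (the reference to Theorem~\ref{GE.Low} in that sentence is a slip, since Corollary~\ref{Gen.Cor} keeps $\psi_{\mathcal{A}}(M,\mathcal{X})$ explicit and makes no Gaussian design assumption). Your additional bookkeeping --- checking that $T_{\mathcal{A}}(M)$ is a closed convex cone and that the per-cone $\psi$ in Theorem~\ref{CE.Low} coincides with $\psi_{\mathcal{A}}(M,\mathcal{X})$ via positive homogeneity --- is more careful than the paper but entirely in the same spirit.
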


Theorem \ref{CE.Low} and Corollary \ref{Gen.Cor} give minimax lower bounds in terms of the Sudakov estimate and volume ratio. In the Gaussian setting, Lemma \ref{LIC.Lemma} shows that the local upper isometry constant  satisfies $\psi_{\mathcal{A}}(M,\mathcal{X}) \leq 1+c$ with probability at least $1-2\exp(-\delta^2/2)$, as long as 
\begin{align*}
n \geq \frac{4[w(B_2^p \cap T_{\mathcal{A}}(M) )+\delta]^2}{c^2} \vee \frac{1}{c}.
\end{align*}
We remark that $\psi_{\mathcal{A}}(M,\mathcal{X})$ can be bounded 
under more general design matrix $\mathcal{X}$. However, under the Gaussian ensemble design, the minimum sample size $n$ to ensure that $\psi_{\mathcal{A}}(M,\mathcal{X})$ is upper bounded directly links with Gaussian width of the tangent cone.
  
From Sudakov minoration in Lemma \ref{Sudakov.Lemma}
  \begin{align}
  \label{RHS2}
c \cdot   e(B_2^p \cap T_\mathcal{A}(M)) \leq w(B_2^p \cap T_\mathcal{A}(M)).
\end{align}
Let us inspect how the right hand side of \eqref{RHS2} compares with the upper bound in Theorem \ref{GE.Thm}. Under mild conditions on $\mathcal{X}$, $w(\mathcal{XA})$ is of order $w(\mathcal{A})$, and thus the upper bound in Theorem~\ref{GE.Thm} is of the order on the left hand side in the following equation. By Lemma \ref{Upp.Link} given in Section \ref{Sec.Pf} we have
  \begin{align}
  \label{wxa}
 \gamma_{\mathcal{A}}(M)w(\mathcal{A})  \geq w(B_2^p \cap T_\mathcal{A}(M)).
  \end{align}
However, the right hand side is of the same order as the left hand side in most cases (see Section \ref{Univ.App}).
Therefore, if the Sudakov minoration is sharp up to a constant factor for the local tangent cone, $
c \cdot e(B_2^p \cap T_{\mathcal{A}}(M)) \leq w(B_2^p \cap T_{\mathcal{A}}(M)) \leq C \cdot e(B_2^p \cap T_{\mathcal{A}}(M))$
then the rate is sharp.


Applying Urysohn's inequality in Lemma \ref{Urysohn.Lemma} we have
$
v(B_2^p \cap T_{\mathcal{A}}(M)) \leq w(B_2^p \cap T_{\mathcal{A}}(M) ).$
Hence, if the reverse Urysohn's inequality holds for the local tangent cone
$
v(B_2^p \cap T_{\mathcal{A}}(M)) \geq c \cdot w(B_2^p \cap T_{\mathcal{A}}(M) )$ 
with some constant $c>0$, then the obtained rate is sharp. Please see \cite{giannopoulos2000convex} for more information on reverse Urysohn's inequality.


\section{Discussion}
\label{Sec.Dis}


This paper presents a unified geometric characterization of the local estimation rates of convergence as well as statistical inference  for high-dimensional linear inverse problems. Major technical tools used in our analysis are geometric functional analysis and concentration of measure for Gaussian processes.

The lower bound constructed in the current paper can be contrasted with the lower bounds in \cite{ye2010rate,candes2013well}. Both the above two papers consider specifically the minimax lower bound for high-dimensional linear regression. \cite{ye2010rate} related the high-dimensional linear regression problem to the normal means problem and provided the minimax lower bound under general $\ell_q$ norm. \cite{candes2013well} constructed packing/covering set via a probabilistic existence argument, then derived a minimax lower bound over sparse vectors. We focus on a more generic perspective -- lower bounds in Theorem \ref{CE.Low} holds in general for arbitrary star-shaped body $T$, which includes $\ell_p, 0\leq p\leq \infty$, balls and cones as special cases.

\section{Proofs}
\label{Sec.Pf}

We prove the main results in this section. The proofs are divided into several parts. For the upper bound of point estimation, we will first prove Theorem \ref{GLC.Thm} and then two lemmas, Lemma \ref{LIC.Lemma} and Lemma \ref{TP.Lemma} (these two Lemmas are included in Appendix \ref{Sec.Tech.Lma}).  Theorem \ref{GE.Thm} is then easy to prove. As for the statistical inference, Theorem \ref{CI.Thm} is proved based on Theorem \ref{GE.Thm}. For the lower bound of point estimation, we first prove Theorem \ref{CE.Low} and use it together with Lemma \ref{LIC.Lemma} to prove Theorem \ref{GE.Low}. Technical lemmas are deferred to Appendix \ref{Sec.Tech.Lma}. Corollaries are proved in Appendix \ref{Pf.Cor}. 


\noindent
\begin{proof}[Proof of Theorem \ref{GLC.Thm}]
The proof is clean and in a general fashion, following directly from the assumptions of the theorem and the definitions:
\begin{align*}
\| \mathcal{X}^* (Y - \mathcal{X} M) \|_{\mathcal{A}}^* &\leq \lambda_n \quad &\text{Assumption of the Theorem}\\
\| \mathcal{X}^* (Y - \mathcal{X} \hat{M}) \|_{\mathcal{A}}^* &\leq \lambda_n \quad &\text{Constraint in program} \\
\| \hat{M} \|_{\mathcal{A}} &\leq \| M \|_{\mathcal{A}} \quad &\text{Definition of minimizer}
\end{align*}
Thus we have
\begin{align}
\| \mathcal{X}^* \mathcal{X}(\hat{M}- M) \|_{\mathcal{A}}^* \leq 2\lambda_n \label{noisy.radius} ~~\text{and}~~ \hat{M} - M \in T_{\mathcal{A}}(M).
\end{align}
The first equation is due to triangle inequality and second one due to Tangent cone definition. Define $H = \hat{M} - M \in T_{\mathcal{A}}(M)$. According to the ``Cauchy-Schwarz'' \eqref{Cauchy.Schwarz} relation between atomic norm and its dual,
\begin{align*}
\| \mathcal{X}(H) \|_{\ell_2}^2  = \langle \mathcal{X}(H),\mathcal{X}(H) \rangle &= \langle \mathcal{X}^*\mathcal{X}(H),H \rangle \leq \| \mathcal{X}^*\mathcal{X}(H)\|_{\mathcal{A}}^* \|H\|_{\mathcal{A}} 
\end{align*}
Using the earlier result $\| \mathcal{X}^*\mathcal{X}(H)\|_{\mathcal{A}}^* \leq 2\lambda_n$, as well as the following two equations for any $H \in T_{\mathcal{A}}(M)$
\begin{align*}
 \phi_{\mathcal{A}}(M,\mathcal{X}) \| H \|_{\ell_2} &\leq \| \mathcal{X}(H) \|_{\ell_2} \quad &\text{local isometry constant} \\
\| H \|_{\mathcal{A}} &\leq \gamma_{\mathcal{A}}(M) \|H\|_{\ell_2} \quad &\text{local asphericity ratio}
\end{align*}
we get the following self-bounding relationship
\begin{align*}
\phi^2_{\mathcal{A}}(M,\mathcal{X}) \| H \|_{\ell_2}^2\leq \| \mathcal{X}(H) \|_{\ell_2}^2 \leq 2\lambda_n \|H\|_{\mathcal{A}} \leq 2\lambda_n \gamma_{\mathcal{A}}(M) \|H\|_{\ell_2}.
\end{align*}
The proof is then completed by simple algebra.
\end{proof}

\noindent
\begin{proof}[Proof of Theorem \ref{GE.Thm}]
Theorem \ref{GE.Thm} is a special case of Theorem \ref{GLC.Thm} under Gaussian setting, combining with Lemma \ref{LIC.Lemma} and Lemma \ref{TP.Lemma}. All we need to show is a good control of $\lambda_n$ and $\phi_{\mathcal{A}}(M,\mathcal{X})$ with probability at least $1  - 3\exp(-\delta^2/2)$ under Gaussian ensemble and Gaussian noise. We bound $\lambda_n$ with probability at least $1 - \exp(-\delta^2/2)$ via Lemma \ref{TP.Lemma}. For $\phi_{\mathcal{A}}(M,\mathcal{X})$, we can lower bound by $1-c$ with probability at least $1 - 2\exp(-\delta^2/2)$. Let's define good event to be when $$\lambda_n \leq \frac{\sigma}{\sqrt{n}} \left\{ \mathbb{E}_{g}\left[ \sup_{v\in \mathcal{A}} \langle {g}, \mathcal{X}v \rangle \right] + \delta \cdot \sup_{v\in \mathcal{A}} \|\mathcal{X} v \|_{\ell_2} \right\}$$ and $1- c \leq \phi_{\mathcal{A}}(M,\mathcal{X}) \leq \psi_{\mathcal{A}}(M,\mathcal{X}) \leq 1+c$ both hold. It is easy to see this good event holds with probability $1 - 3\exp(-\delta^2/2)$. 
Thus all we need to prove is $\max_{z \in \mathcal{A}} \| \mathcal{X} z\| \leq 1+c$ under the good event.

According to Lemma \ref{LIC.Lemma}, equation \eqref{74}'s calculation, $\max_{z \in \mathcal{A}} \| \mathcal{X} z\| \leq 1+c$ is satisfied under the condition
\begin{align*}
n \geq \frac{[w(B_2^p \cap \mathcal{A})+\delta]^2}{c^2}.
\end{align*}
As we know for any $M$,  the unit atomic norm ball ${\sf conv}(\mathcal{A})$ is contained in $2 B_2^p$ and $T_{\mathcal{A}}(M)$, which means $B_2^p \cap \mathcal{A} \subset 2B_2^p \cap T_{\mathcal{A}}(M)$, thus $w(B_2^p \cap \mathcal{A}) \leq  2 w(B_2^p \cap T_{\mathcal{A}}(M))$ (monotonic property of Gaussian width). So we have for any $M$, if 
\begin{align*}
n \geq \frac{4[w(B_2^p \cap T_{\mathcal{A}}(M))+\delta]^2}{c^2} \vee \frac{1}{c}.
\end{align*}
we have the following two equations with probability at least $1 - 2\exp(-\delta^2/2)$
\begin{align}
\label{88}
\max_{z \in \mathcal{A}} \| \mathcal{X} z\| \leq 1+c \nonumber \\
1- c \leq \phi_{\mathcal{A}}(M,\mathcal{X}) \leq \psi_{\mathcal{A}}(M,\mathcal{X}) \leq 1+c .
\end{align}
Now plugging \eqref{88} into the expression of Lemma \ref{TP.Lemma}, together with Lemma \ref{LIC.Lemma}, Theorem \ref{GLC.Thm} reduces to Theorem \ref{GE.Thm}.
\end{proof}

\noindent
	\begin{proof}[Proof of Theorem~\ref{CI.Thm}]
		Let's first prove that, with high probability, the convex feasibility program \eqref{FES.Con} is indeed feasible with $\Omega = I_n$. Equivalently we want to establish that, with high probability, for any $1\leq i\leq p$,
		\begin{align*}
		\| \mathcal{X}^* \mathcal{X} e_i - e_i \|_{\mathcal{A}}^* \leq \eta
		\end{align*}
		for some proper choice of $\eta$. Here $\mathcal{X} \in \mathbb{R}^{n \times p}$, and each entry $\mathcal{X}_{ij} \sim N(0, 1/n), i.i.d.$. Denote $g = \sqrt{n} \mathcal{X}_{\cdot i}$ as a scaling version of the $i$-th column of $\mathcal{X}$, $g \sim N(0, I_n)$ and $g'\sim N(0, I_n)$ being an independent copy.  Below the notation $O_p(\cdot)$ denotes the asymptotic order in probability.
		\begin{align*}
			\| \mathcal{X}^* \mathcal{X} e_i - e_i \|_{\mathcal{A}}^*  & = \sup_{v \in \mathcal{A}} \langle  \mathcal{X}^* \mathcal{X} e_i - e_i, v \rangle 
																	    = \sup_{v \in \mathcal{A}} \langle  \mathcal{X}^* g - e_i, v \rangle /\sqrt{n} \\
																	 & \leq \sup_{v \in \mathcal{A}} \langle \mathcal{X}_{(-i)}^* g, v \rangle /\sqrt{n} + \sup_{v \in \mathcal{A}} \left(\frac{1}{n} \sum_{j=1}^n g_{j}^2  - 1 \right) v_i \\
																	   & \stackrel{w.h.p}{\precsim} \frac{w(\mathcal{X}_{(-i)} \mathcal{A})}{\sqrt{n}} + O_p(\log p/\sqrt{n})  \quad \text{invoking Lemma~\ref{TP.Lemma}}\\
																	   & \leq \frac{w(\mathcal{X} \mathcal{A})}{\sqrt{n}} + \frac{\mathbb{E}_{g'} \sup_{v \in \mathcal{A}}\sum_{k=1}^n g'_k \mathcal{X}_{ki} v_i }{\sqrt{n}}  + O_p(\log p/\sqrt{n})\\
																	   & \leq \frac{w(\mathcal{X} \mathcal{A})}{\sqrt{n}} + \frac{\sqrt{\mathbb{E}_{g'}  (\sum_{k=1}^n g'_k \mathcal{X}_{ki} )^2 \sup_{v \in \mathcal{A}} v_i^2 }}{\sqrt{n}}  + O_p(\log p/\sqrt{n})  \\
																	   &  \leq  \frac{w(\mathcal{X} \mathcal{A})}{\sqrt{n}} + \sqrt{\frac{1+O_p(\log p/\sqrt{n})}{n}} + O_p(\log p/\sqrt{n})
		\end{align*}
		where $\mathcal{X}_{(-i)}$ is the linear operator setting $i$-th column to be all zeros. We applied Lemma~\ref{TP.Lemma} in establishing the above bounds.
		
		For the de-biased estimate $\tilde{M}$, we have
		\begin{align*}
			\tilde{M} & = \hat{M} + \Omega \mathcal{X}^* (Y - \mathcal{X}(\hat{M})) \\
			\tilde{M} - M & = (\Omega \mathcal{X}^* \mathcal{X} - I_p) (M - \hat{M}) + \Omega \mathcal{X}^* Z 
			 : = \Delta + \frac{\sigma}{\sqrt{n}}\Omega \mathcal{X}^* W.
		\end{align*}
		Then for any $1\leq i \leq p$, from the Cauchy-Schwartz relationship \eqref{Cauchy.Schwarz},
		\begin{align}
			|\Delta_i| = |\langle  \mathcal{X}^* \mathcal{X} \Omega_{i\cdot}^* - e_i, M - \hat{M} \rangle | \leq \|\mathcal{X}^* \mathcal{X} \Omega_{i\cdot}^* - e_i \|_{\mathcal{A}}^* \| M - \hat{M} \|_{\mathcal{A}} \leq \frac{\gamma^2_{\mathcal{A}}(M) w^2(\mathcal{XA})}{n}.
		\end{align}
		The last line invokes the consistency result in Theorem~\ref{GE.Thm},
		$
		\| \hat{M} - M \|_{\mathcal{A}} \precsim \frac{ \gamma^2_{\mathcal{A}}(M) w(\mathcal{XA}) }{\sqrt{n}}.
		$
		Thus we have
		$$
		\| \Delta \|_\infty \leq \frac{\gamma^2_{\mathcal{A}}(M) w^2(\mathcal{XA})}{n}.
		$$
	\end{proof}


\noindent
\begin{proof}[Proof of Theorem \ref{CE.Low} with Sudakov Estimate]
The key technical tool in proving Theorem \ref{CE.Low} is the following well-known Fano's information lemma. This version is from \citet{ma2013volume}, similar versions are provided in \citet{yang1999information,yu1997assouad, tsybakov2009introduction}, and the ideas are essentially the same.
\begin{lemma}[Fano's Lemma]
\label{Fano.Lemma}
Let $(\Theta, d(\cdot, \cdot))$ be a (pseudo) metric space and $\{ \mathbb{P}_\theta: \theta \in \Theta \}$ be a collection of probability measures. Let $r \geq 2$ be an integer and let $\mathcal{S} \subset T \subset \Theta$. Denote by $\mathcal{M}(\mathcal{S}, \epsilon, d)$ the $\epsilon$ packing set as well as the packing number of $T$ with respect to metric $d$, i.e.
\begin{align*}
\inf_{\theta,\theta'\in \mathcal{M}(\mathcal{S}, \epsilon, d)} d(\theta,\theta') \geq \epsilon.
\end{align*} 
Suppose $\beta : = \sup_{\theta,\theta'\in \mathcal{M}(\mathcal{S}, \epsilon, d)} D_{KL}(\mathbb{P}_\theta||\mathbb{P}_{\theta'})>0$. Then 
\begin{align*}
\inf_{\hat{\theta}}\sup_{\theta \in T} \mathbb{E}_\theta d^2(\hat{\theta},\theta) \geq \sup_{\mathcal{S} \subset T,\epsilon >0} \frac{\epsilon^2}{4} \left( 1 - \frac{\beta + \log 2}{\log \mathcal{M}(\mathcal{S}, \epsilon, d)}  \right).
\end{align*}
\end{lemma}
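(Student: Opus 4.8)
The plan is to follow the classical reduction from estimation to multiple hypothesis testing over a maximal packing set, and then invoke Fano's inequality. Fix any $\mathcal{S} \subset T$ and $\epsilon > 0$, and let $\{\theta_1, \dots, \theta_N\}$ be an $\epsilon$-separated packing set realizing $N = \mathcal{M}(\mathcal{S}, \epsilon, d)$, so that $d(\theta_i, \theta_j) \geq \epsilon$ for all $i \neq j$. Since the worst-case risk over $T$ dominates the uniform average over this finite subfamily, it suffices to lower bound $\frac{1}{N}\sum_{i=1}^N \mathbb{E}_{\theta_i} d^2(\hat{\theta}, \theta_i)$ for an arbitrary estimator $\hat{\theta}$, and then maximize over $\mathcal{S}$ and $\epsilon$ at the end.

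First I would convert the estimation error into a testing error. Given $\hat{\theta}$, define the minimum-distance test $\psi = \argmin_{i} d(\hat{\theta}, \theta_i)$. The key geometric observation is that whenever $\psi \neq i$ we have, by the triangle inequality together with the $\epsilon$-separation, $\epsilon \leq d(\theta_i, \theta_{\psi}) \leq d(\theta_i, \hat{\theta}) + d(\hat{\theta}, \theta_\psi) \leq 2\, d(\theta_i, \hat{\theta})$, so that $d(\hat{\theta}, \theta_i) \geq \epsilon/2$. Markov's inequality then gives $\mathbb{E}_{\theta_i} d^2(\hat{\theta}, \theta_i) \geq (\epsilon/2)^2\, \mathbb{P}_{\theta_i}(\psi \neq i)$, and averaging over $i$ produces
\[
\sup_{\theta \in T} \mathbb{E}_\theta d^2(\hat{\theta}, \theta) \;\geq\; \frac{\epsilon^2}{4} \cdot \frac{1}{N}\sum_{i=1}^N \mathbb{P}_{\theta_i}(\psi \neq i).
\]

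Next I would lower bound the average testing error. Introducing a uniform index $J$ on $\{1, \dots, N\}$ and drawing the data $Y$ from $\mathbb{P}_{\theta_J}$, the quantity $\frac{1}{N}\sum_i \mathbb{P}_{\theta_i}(\psi \neq i)$ is exactly the Bayes error $\mathbb{P}(\psi \neq J)$, for which Fano's inequality yields $\mathbb{P}(\psi \neq J) \geq 1 - (I(J;Y)+\log 2)/\log N$. It then remains to control the mutual information $I(J;Y) = \frac{1}{N}\sum_i D_{KL}(\mathbb{P}_{\theta_i}\,\|\,\bar{\mathbb{P}})$ with $\bar{\mathbb{P}} = \frac{1}{N}\sum_j \mathbb{P}_{\theta_j}$; using convexity of $q \mapsto D_{KL}(p\,\|\,q)$ (Jensen) I get $I(J;Y) \leq \frac{1}{N^2}\sum_{i,j} D_{KL}(\mathbb{P}_{\theta_i}\,\|\,\mathbb{P}_{\theta_j}) \leq \beta$. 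Substituting into the Fano bound and combining with the previous display gives the claimed inequality, after taking the supremum over $\mathcal{S}$ and $\epsilon$.

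I expect no serious obstacle, since this is the standard Fano recipe for minimax lower bounds; the only points requiring care are the triangle-inequality passage that converts the squared loss into the factor $\epsilon^2/4$ (which relies on the $\epsilon$-separation of the packing set and the use of the minimum-distance test), and the Jensen step that upgrades the mutual information into the pairwise worst-case divergence $\beta$. Both are routine, and because the statement is quoted verbatim from \citet{ma2013volume}, an acceptable alternative is simply to cite that reference; the argument sketched above is nonetheless self-contained.
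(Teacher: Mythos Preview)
Your argument is correct and follows the standard Fano recipe. Note, however, that the paper does not actually prove this lemma: it is stated as a technical tool inside the proof of Theorem~\ref{CE.Low} and immediately attributed to \citet{ma2013volume}, with parallel references to \citet{yang1999information}, \citet{yu1997assouad}, and \citet{tsybakov2009introduction}. So the paper's ``proof'' is a citation, whereas you supply the self-contained derivation (reduction to a packing set, minimum-distance test, triangle inequality yielding the $\epsilon^2/4$ factor, Fano's inequality on the uniform mixture, and the convexity bound $I(J;Y)\le\beta$). Both routes are acceptable here; your last sentence already anticipates this, and there is nothing to correct.
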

For the lower bound using {\bf Sudakov estimate}.
Recall the linear inverse model
\begin{align*}
Y = \mathcal{X}(M) + Z, \quad \text{where $Z \sim N(0, \frac{\sigma^2}{n} I_n)$}.
\end{align*}
Without loss of generality, we can assume $\sigma = 1$. The Kullback-Leiber divergence between standardized linear inverse models with different parameters under the Gaussian noise is
\begin{align*}
D_{KL}(M || M') = \frac{n \| \mathcal{X}(M) - \mathcal{X}( M') \|_{\ell_2}^2}{2}.
\end{align*}
Recall Sudakov Minoration in Lemma \ref{Sudakov.Lemma}, and denote the critical radius
\begin{align*}
\tilde{\epsilon}(B_2^p \cap T) : = \arg\max_{\epsilon} ~ \epsilon \sqrt{\log \mathcal{N}(B_2^p \cap T,\epsilon)}
\end{align*} 
Consider the cone intersected with $\ell_2$ ball with radius $\delta$,
$
K(\delta) := B_2^p(\delta) \cap T \in \mathbb{R}^p,
$
 where $\delta$ will be specified later. As before, define $\psi = \sup_{v \in B_2^p \cap T } \| \mathcal{X} (v) \|_{\ell_2}$
\begin{align*}
\sup_{M, M' \in K(\delta)} D_{KL}(M || M') \leq \frac{n (\| \mathcal{X}(M) \|_{\ell_2} + \| \mathcal{X}( M') \|_{\ell_2})^2}{2}  \leq 2 n \delta^2 \psi^2.
\end{align*}
The packing number is lower bounded by the covering number as (the last equality holds because we can scale both the set and covering ball by $\delta$)
\begin{align*}
\mathcal{M}(K(\delta), \epsilon) \geq \mathcal{N}(K(\delta), \epsilon)   = \mathcal{N}(K(1), \frac{\epsilon}{\delta}) 
\end{align*}
Applying the Fano's lemma, we have
\begin{align*}
\inf_{\hat{M}} \sup_{M \in T}  \mathbb{E}_{\cdot | \mathcal{X}} \| \hat{M} - M \|_{\ell_2}^2 & \geq \sup_{\delta >0,0<\epsilon<\delta} \frac{\epsilon^2}{4} \left( 1 - \frac{ 2 n \delta^2 \psi^2 +   \log 2}{\log \mathcal{N}(K(1), \frac{\epsilon}{\delta}) }  \right).
\end{align*}
Because $K(1) = B_2^p \cap T$, set  
\begin{align*}
\delta =  \frac{1}{2\psi} \cdot \sqrt{\frac{\log \mathcal{N}(B_2^p \cap T, \tilde{\epsilon}(B_2^p \cap T))}{n}},\quad \epsilon = \delta \cdot  \tilde{\epsilon}(B_2^p \cap T)
\end{align*}
Then we have
\begin{align*}
\inf_{\hat{M}} \sup_{M \in T}  \mathbb{E}_{\cdot | \mathcal{X}} \| \hat{M} - M \|_{\ell_2}^2 & \geq \frac{c_0}{\psi^2 n} \cdot \left( \tilde{\epsilon}(B_2^p \cap T) \sqrt{\log \mathcal{N}(B_2^p \cap T,\tilde{\epsilon}(B_2^p \cap T))}  \right)^2 .
\end{align*} 
with some universal constant $c_0$. Thus
\begin{align*}
\inf_{\hat{M}} \sup_{M \in T}  \mathbb{E}_{\cdot | \mathcal{X}} \| \hat{M} - M \|_{\ell_2}^2 & \geq \frac{c_0 \sigma^2}{\psi^2 n} \cdot \left( e(B_2^p \cap T) \right)^2 .
\end{align*} 
\end{proof}

\bibliographystyle{apalike}

\bibliography{High_Dim}

\newpage

\appendix

\section{Appendix: Supplementary Proofs for Technical Lemmas}

\label{Sec.Tech.Lma}
\noindent
\begin{proof}[Proof of Lemma \ref{TP.Lemma}]
The proof uses concentration of Lipschitz functions on Gaussian space, which is illustrated in the following lemma taken from equation (1.6) in \cite{ledoux1991probability}.

\begin{lemma}[Gaussian concentration inequality for Lipschitz functions]
Let ${g} \in \mathbb{R}^p $ be a Gaussian vector with i.i.d mean zero and variance one elements and let $F:\mathbb{R}^p \rightarrow \mathbb{R}$ be a Lipschitz function with Lipschitz constant $L$ (i.e. $F(x) - F(y)| \leq L|x - y|$ for any $x,y \in \mathbb{R}^p$, with Euclidean metric on $\mathbb{R}^p$). Then for any $\lambda>0$,
\begin{align*}
\mathbb{P}(|F({g}) - \mathbb{E}_{g} F({g}))| \geq \lambda) \leq 2\exp\left(-\frac{\lambda^2}{2L^2}\right).
\end{align*}
\end{lemma}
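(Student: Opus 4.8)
The statement is the sharp Gaussian concentration inequality, so the plan is to reduce it to a sub-Gaussian bound on the moment generating function (MGF) of $F(g) - \mathbb{E}_g F(g)$ and then apply a Chernoff bound. Concretely, I would first establish
$$\mathbb{E}_g\, e^{\lambda(F(g) - \mathbb{E}_g F(g))} \le e^{L^2\lambda^2/2}, \qquad \lambda \in \mathbb{R},$$
after which, for $\lambda>0$, Markov's inequality gives $\mathbb{P}(F - \mathbb{E} F \ge \tau) \le e^{-\lambda\tau + L^2\lambda^2/2}$; optimizing over $\lambda$ (taking $\lambda = \tau/L^2$) yields the one-sided tail $e^{-\tau^2/(2L^2)}$. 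Applying this one-sided bound to both $F$ and $-F$ (each $L$-Lipschitz) and taking a union bound produces the factor $2$ and the two-sided estimate as stated, with the sharp constant $1/2$ in the exponent.

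Before this I would make two reductions. First, it suffices to treat smooth $F$: a general $L$-Lipschitz $F$ can be approximated by smooth $L$-Lipschitz functions (for instance by convolving with a Gaussian mollifier, which does not increase the Lipschitz constant), and both $\mathbb{E} F$ and the relevant tail probabilities pass to the limit. Second, by Rademacher's theorem the Lipschitz hypothesis yields $\|\nabla F\|_{\ell_2} \le L$ almost everywhere, which is the form of the bound I will actually use.

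The heart of the argument is the MGF bound, which I would obtain from the Gaussian logarithmic Sobolev inequality via Herbst's argument. Writing $\mathrm{Ent}(h) := \mathbb{E}[h\log h] - \mathbb{E}[h]\log\mathbb{E}[h]$, the log-Sobolev inequality states $\mathrm{Ent}(f^2) \le 2\,\mathbb{E}\|\nabla f\|_{\ell_2}^2$ for the standard Gaussian measure. Applying it to $f = e^{\lambda F/2}$ and using $\|\nabla F\|_{\ell_2}\le L$ gives, with $H(\lambda) := \mathbb{E} e^{\lambda F}$,
$$\lambda H'(\lambda) - H(\lambda)\log H(\lambda) = \mathrm{Ent}(e^{\lambda F}) \le \tfrac{1}{2}L^2\lambda^2\, H(\lambda).$$
Dividing by $\lambda^2 H(\lambda)$ recognizes the left-hand side as $\frac{d}{d\lambda}\big(\lambda^{-1}\log H(\lambda)\big)$, so $\lambda^{-1}\log H(\lambda)$ has derivative at most $L^2/2$; integrating from $0$ (where $\lambda^{-1}\log H(\lambda)\to\mathbb{E} F$) gives $\log H(\lambda)\le \lambda\,\mathbb{E} F + L^2\lambda^2/2$, which is precisely the desired MGF bound for $\lambda>0$, and symmetrically for $\lambda<0$.

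The main obstacle is the Gaussian log-Sobolev inequality itself, which is the only nontrivial external input and is exactly what delivers the sharp constant: elementary interpolation arguments (such as Maurey--Pisier, writing $F(X)-F(Y)=\int_0^{\pi/2}\langle\nabla F(X_\theta),\dot X_\theta\rangle\,d\theta$ along a rotation path) are clean but lose a factor, giving $\pi^2/8$ in place of $1/2$. I would therefore either invoke the log-Sobolev inequality as a classical fact (see \cite{ledoux1991probability}) or sketch its proof through the Ornstein--Uhlenbeck semigroup $(P_t)$, using entropy dissipation along $P_t$ together with the commutation relation $\nabla P_t f = e^{-t}P_t(\nabla f)$; the remaining smoothing and differentiation-under-the-integral-sign steps are routine.
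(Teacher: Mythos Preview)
Your argument is correct and standard: the Herbst reduction of the log-Sobolev inequality does yield the MGF bound $\mathbb{E}\,e^{\lambda(F-\mathbb{E}F)}\le e^{L^2\lambda^2/2}$ with the sharp constant, and the Chernoff step plus the union bound give exactly the stated two-sided tail. The smoothing reduction and the remark that Maurey--Pisier loses the constant are also accurate.

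That said, there is nothing to compare on the paper's side: the paper does not prove this lemma at all. It is quoted verbatim as a classical fact, with a pointer to equation~(1.6) in \cite{ledoux1991probability}, and is used as a black box inside the proof of Lemma~\ref{TP.Lemma}. So your proposal supplies strictly more than the paper does. If your goal is to match the paper, a one-line citation suffices; if your goal is a self-contained write-up, what you have is a clean route, with the only external input being the Gaussian log-Sobolev inequality, exactly as you flagged.
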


We would like to upper bound $\| \mathcal{X}^*(Z) \|_{\mathcal{A}}^*$ with high probability, where $Z \sim N(0,\frac{\sigma^2}{n} {\bf I}_{n})$. We have
\begin{align*}
\| \mathcal{X}^* Z \|_{\mathcal{A}}^* = \sup_{v \in \mathcal{A}} \langle \mathcal{X}^* Z, v \rangle  = \sup_{v \in \mathcal{A}}\langle Z, \mathcal{X} v \rangle.
\end{align*}
Fixing $\mathcal{X}$, we can think of $\sup_{v \in \mathcal{A}} \langle  \cdot,  \mathcal{X} v \rangle : \mathbb{R}^p \rightarrow \mathbb{R}$ as a function on the Gaussian space ${g} \sim N(0, {\bf I}_n)$ satisfying the Lipschitz constant $K_{\mathcal{X}}^{\mathcal{A}} : = \sup_{v \in \mathcal{A}} \| \mathcal{X} v\|_{\ell_2}$
\begin{align*}
|\sup_{v \in \mathcal{A}} \langle {g}_1,  \mathcal{X} v \rangle - \sup_{v \in \mathcal{A}} \langle {g}_2,  \mathcal{X} v \rangle | \leq  K_{\mathcal{X}}^{\mathcal{A}} \cdot \| {g}_1 - {g}_2\|_{\ell_2}.
\end{align*}
In fact, first fixing an $u_1 = \arg \sup_{v \in \mathcal{A}} \langle {g}_1,  \mathcal{X} v \rangle$, then
\begin{align*}
\sup_{v \in \mathcal{A}} \langle {g}_1,  \mathcal{X} v \rangle - \sup_{v \in \mathcal{A}} \langle {g}_2,  \mathcal{X} v \rangle \leq  \langle {g}_1-{g}_2,  \mathcal{X} u_1 \rangle \leq \|\mathcal{X} u_1 \|_{\ell_2} \cdot  \| {g}_1-{g}_2\|_{\ell_2}.
\end{align*}
The other side uses the same trick, fixing $u_2 = \arg \sup_{v \in \mathcal{A}} \langle {g}_2, \mathcal{X} v \rangle$ 
\begin{align*}
\sup_{v \in \mathcal{A}} \langle {g}_1,  \mathcal{X} v \rangle - \sup_{v \in \mathcal{A}} \langle {g}_2,  \mathcal{X} v \rangle \geq  \langle {g}_1-{g}_2,  \mathcal{X} u_2 \rangle \geq -\|\mathcal{X} u_2 \|_{\ell_2} \cdot  \| {g}_1-{g}_2\|_{\ell_2}.
\end{align*}
Thus we proved the Lipschitz constant is upper bounded by $K_{\mathcal{X}}^{\mathcal{A}}$. Now we can apply the concentration of Lipschitz function on Gaussian space and get
\begin{align}
\mathbb{P}(\| \mathcal{X}^* Z \|_{\mathcal{A}}^*\geq \mathbb{E}\| \mathcal{X}^* Z \|_{\mathcal{A}}^* + \lambda  ) \leq \exp\left(-\frac{n \lambda^2}{2 \sigma^2  (K_{\mathcal{X}}^{\mathcal{A}})^2}\right).
\end{align}
Thus we have with probability at least $1 - \exp(-\delta^2/2)$,
\begin{align}
\| \mathcal{X}^* Z \|& \leq  \frac{\sigma}{\sqrt{n}} \left\{ \mathbb{E}_{g}\left[ \sup_{v\in \mathcal{A}} \langle {g}, \mathcal{X}v \rangle \right] + \delta \cdot \sup_{v\in \mathcal{A}} \|\mathcal{X} v \|_{\ell_2} \right\}.
\end{align}
\end{proof}

\noindent
\begin{proof}[Proof of Lemma \ref{LIC.Lemma}]
The proof uses Gordon's method \cite{gordon1988milman}. The lower bound side part of this lemma is a modified version of the key lemma in \cite{chandrasekaran2012convex}. First let's introduce an important lemma in Gordon's analysis.
\begin{lemma}(Corollary 1.2 in \cite{gordon1988milman})
Let $\Omega$ be a closed subset of $\mathbb{S}^{p-1}$. Let $\Phi: \mathbb{R}^p \rightarrow \mathbb{R}^n$ be a random map with i.i.d. zero-mean Gaussian entries having variance one. Then
\begin{align*}
\lambda_n - w(\Omega) \leq \mathbb{E} \left[ \min_{z \in \Omega} \|\Phi z \|_{\ell_2}  \right] \leq \mathbb{E} \left[ \max_{z \in \Omega} \|\Phi z \|_{\ell_2}  \right] \leq \lambda_n + w(\Omega)
\end{align*}
where $\lambda_n = \sqrt{2}\Gamma(\frac{n+1}{2})/\Gamma(\frac{n}{2})$ satisfies $n/\sqrt{n+1} < \lambda_n < \sqrt{n}$.
\end{lemma}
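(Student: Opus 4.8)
The plan is to prove both outer inequalities through the Gaussian min--max comparison principle (Gordon's theorem, of which Sudakov--Fernique is the single--index special case), after rewriting each Euclidean norm as a supremum of a bilinear Gaussian form. The middle inequality $\mathbb{E}[\min_{z\in\Omega}\|\Phi z\|_{\ell_2}] \le \mathbb{E}[\max_{z\in\Omega}\|\Phi z\|_{\ell_2}]$ is immediate from $\min \le \max$ holding pointwise, so all the content lies in the two extreme inequalities, and the value $\lambda_n = \mathbb{E}\|h\|_{\ell_2}$ for $h\sim N(0,I_n)$ will emerge as the expected norm of the comparison process.

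First I would introduce two centered Gaussian processes indexed by $(z,u) \in \Omega \times \mathbb{S}^{n-1}$: the primal process $X_{z,u} = \langle \Phi z, u\rangle$ and the decoupled comparison process $Y_{z,u} = \langle g, z\rangle + \langle h, u\rangle$, with $g\sim N(0,I_p)$ and $h\sim N(0,I_n)$ independent. Since $\|\Phi z\|_{\ell_2} = \max_{u\in\mathbb{S}^{n-1}}\langle\Phi z,u\rangle$, we have $\max_{z\in\Omega}\|\Phi z\|_{\ell_2} = \max_{(z,u)}X_{z,u}$ and $\min_{z\in\Omega}\|\Phi z\|_{\ell_2} = \min_z\max_u X_{z,u}$. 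The engine is a comparison of increment variances: using $\|z\|_{\ell_2}=\|u\|_{\ell_2}=1$ throughout, a direct computation gives $\mathbb{E}(X_{z,u}-X_{z',u'})^2 = 2 - 2\langle z,z'\rangle\langle u,u'\rangle$ and $\mathbb{E}(Y_{z,u}-Y_{z',u'})^2 = (2-2\langle z,z'\rangle)+(2-2\langle u,u'\rangle)$, whose difference factors as the single identity $\mathbb{E}(Y_{z,u}-Y_{z',u'})^2 - \mathbb{E}(X_{z,u}-X_{z',u'})^2 = 2(1-\langle z,z'\rangle)(1-\langle u,u'\rangle) \ge 0$. This quantity is nonnegative for all pairs and vanishes exactly when $z=z'$, and it drives everything.

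For the upper bound I would invoke Sudakov--Fernique: the identity gives $\mathbb{E}(X_{z,u}-X_{z',u'})^2 \le \mathbb{E}(Y_{z,u}-Y_{z',u'})^2$ for all pairs, hence $\mathbb{E}[\max_{(z,u)}X_{z,u}] \le \mathbb{E}[\max_{(z,u)}Y_{z,u}]$. The right side separates as $\mathbb{E}[\max_{z\in\Omega}\langle g,z\rangle] + \mathbb{E}[\max_{u\in\mathbb{S}^{n-1}}\langle h,u\rangle] = w(\Omega) + \lambda_n$, yielding the claimed $\lambda_n + w(\Omega)$. For the lower bound I would apply Gordon's min--max theorem with the comparison process playing the role of the ``first'' process: the identity shows the same-$z$ increments agree while the different-$z$ increments of $Y$ dominate those of $X$, which are exactly Gordon's two hypotheses, so $\mathbb{E}[\min_z\max_u X_{z,u}] \ge \mathbb{E}[\min_z\max_u Y_{z,u}]$. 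Here $\max_u Y_{z,u} = \langle g,z\rangle + \|h\|_{\ell_2}$, so $\mathbb{E}[\min_z\max_u Y_{z,u}] = \mathbb{E}[\min_z\langle g,z\rangle] + \lambda_n = -w(\Omega)+\lambda_n$, where symmetry of $g$ gives $\mathbb{E}\min_z\langle g,z\rangle = -w(\Omega)$.

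Finally, $\lambda_n = \mathbb{E}\|h\|_{\ell_2} = \sqrt2\,\Gamma(\tfrac{n+1}2)/\Gamma(\tfrac n2)$ is the standard mean of a chi variable, and the bracketing $n/\sqrt{n+1} < \lambda_n < \sqrt n$ follows from Jensen's inequality ($\lambda_n \le \sqrt{\mathbb{E}\|h\|_{\ell_2}^2}=\sqrt n$) together with a Gamma-function (Gautschi) estimate for the lower side. I expect the main obstacle to be bookkeeping the orientation of Gordon's comparison: its two hypotheses treat the outer ($\min$) and inner ($\max$) indices asymmetrically, and one must check that it is the different-$z$ increments of the comparison process that dominate, since getting this backwards flips the inequality; the correct orientation is exactly the one certified by the sign of $2(1-\langle z,z'\rangle)(1-\langle u,u'\rangle)$. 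A secondary subtlety is that $X$ and $Y$ have unequal variances ($1$ versus $2$), so the whole argument must be run through the increment (Sudakov--Fernique / Gordon) form rather than the covariance form that presupposes matched variances.
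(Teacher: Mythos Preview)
Your proof is correct and is precisely the classical argument due to Gordon: rewrite $\|\Phi z\|_{\ell_2}$ as $\max_{u\in\mathbb{S}^{n-1}}\langle\Phi z,u\rangle$, compare the bilinear Gaussian process $X_{z,u}=\langle\Phi z,u\rangle$ with the decoupled process $Y_{z,u}=\langle g,z\rangle+\langle h,u\rangle$ via the increment identity $\mathbb{E}(Y-Y')^2-\mathbb{E}(X-X')^2=2(1-\langle z,z'\rangle)(1-\langle u,u'\rangle)\ge 0$, then apply Sudakov--Fernique for the upper bound and Gordon's min--max comparison for the lower bound. Your handling of the orientation and of the unequal-variance issue is accurate.

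However, the paper itself does \emph{not} prove this lemma. It is quoted verbatim as Corollary~1.2 from \cite{gordon1988milman} and used as a black-box input in the proof of Lemma~\ref{LIC.Lemma}. So there is nothing to compare: you have supplied a (correct, standard) proof of a result that the paper simply cites from the literature.
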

Use the same step as in Lemma \ref{TP.Lemma}: for any closed subset $\Omega \in \mathbb{S}^{p-1}$, the functions $\Phi \rightarrow \min_{z \in \Omega} \|\Phi z\|_{\ell_2}$ and $\Phi \rightarrow \max_{z \in \Omega} \| \Phi z \|_{\ell_2}$ both are Lipschitz maps on Gaussian space $\Phi$ with Lipchitz constant 1:
\begin{align*}
|\min_{z \in \Omega} \|\Phi_1 z\|_{\ell_2} - \min_{z \in \Omega} \|\Phi_2 z\|_{\ell_2} | \leq \| \Phi_1 - \Phi_2 \|_F, ~~~|\max_{z \in \Omega} \|\Phi_1 z\|_{\ell_2} - \max_{z \in \Omega} \|\Phi_2 z\|_{\ell_2} | \leq \| \Phi_1 - \Phi_2 \|_F.
\end{align*}
Thus using the Lipchitz concentration in Gaussian space, we have
\begin{align*}
\mathbb{P} \left( \min_{z \in \Omega} \|\mathcal{X} z\|_{\ell_2}  \leq   \mathbb{E} [ \min_{z \in \Omega} \|\mathcal{X} z \|_{\ell_2}  ] - t \right) \leq \exp( -nt^2/2 )\\
\mathbb{P} \left( \max_{z \in \Omega} \|\mathcal{X} z\|_{\ell_2}  \geq  \mathbb{E} [ \max_{z \in \Omega} \|\mathcal{X} z \|_{\ell_2}  ] + t  \right) \leq \exp(-nt^2/2)
\end{align*}
where $\mathcal{X}$ is a Gaussian ensemble design. And we have
\begin{align*}
\mathbb{P} \left( \min_{z \in \Omega} \|\mathcal{X} z\|_{\ell_2}  \leq  1-c  \right) \leq \exp( -(\lambda_n - w(\Omega) - \sqrt{n}(1-c))^2/2 ) \\
\mathbb{P} \left( \max_{z \in \Omega} \|\mathcal{X} z\|_{\ell_2}  \geq  1+c  \right) \leq \exp(-(\sqrt{n}(1+c) - \lambda_n - w(\Omega))^2/2).
\end{align*}
Thus under the condition 
\begin{align*}
 n \geq \frac{4[w(\Omega)+\delta]^2}{c^2} \vee \frac{1}{c}
\end{align*}
we have
\begin{align}
\label{74}
\sqrt{n}(1+c) - \lambda_n - w(\Omega) \geq \sqrt{n}(1+c) - \sqrt{n} - w(\Omega) \geq 2[w(\Omega)+\delta] - w(\Omega) \geq \delta
\end{align}
and
\begin{align*}
\lambda_n - w(\Omega) - \sqrt{n}(1-c) & \geq \frac{n}{\sqrt{n+1}} -  \sqrt{n}(1-c) - w(\Omega) -\delta + \delta\\
& \geq -\frac{\sqrt{n}}{\sqrt{n+1}} \cdot \frac{1}{\sqrt{n+1}+\sqrt{n}} + \sqrt{n} c -(w(\Omega)+\delta) +\delta \\
& \geq -\frac{1}{2\sqrt{n}} + \frac{\sqrt{n}c}{2} +\delta \geq \delta.
\end{align*}
Thus
\begin{align*}
\lambda_n - w(\Omega) - \sqrt{n}(1-c) \geq \delta >0, ~~~\sqrt{n}(1+c) - \lambda_n - w(\Omega) \geq \delta >0.
\end{align*}
In fact, we proved a stronger result
\begin{align*}
\mathbb{P} \left( \min_{z \in \Omega} \|\mathcal{X} z\|_{\ell_2}  \geq  1-c  \right) \geq 1 - \exp(-\delta^2/2), ~~~\mathbb{P} \left( \max_{z \in \Omega} \|\mathcal{X} z\|_{\ell_2}  \leq  1+c  \right) \geq 1 - \exp(-\delta^2/2). 
\end{align*}
Now apply our lemma to local tangent cone $T_\mathcal{A}(M)$, observe $w(B_2^p \cap T_\mathcal{A}(M)) = w(S^{p-1} \cap T_\mathcal{A}(M))$. Now the lemma holds by plugging in the tangent cone.
\end{proof}


\noindent
\begin{proof}[Proof of Lemma \ref{Upp.Link}]
The proof requires an observation
\begin{align*}
w(\mathcal{A}) = \mathbb{E}_{g} \sup_{v\in \mathcal{A}} \langle {g},v \rangle  =  \mathbb{E}_{g} \| {g}\|_{\mathcal{A}}^* \quad \text{definition of dual norm.}
\end{align*}
Then
\begin{align}
  \label{LHSRHS}
  \gamma_{\mathcal{A}}(M)w(\mathcal{A})  & = \mathbb{E}_{g} \| {g} \|_{\mathcal{A}}^* \cdot \sup_{h \in T_\mathcal{A}(M)} \frac{\| h \|_{\mathcal{A}}}{\| h \|_{\ell_2}} 
   = \mathbb{E}_{g}  \left [ \| {g}\|_{\mathcal{A}}^* \cdot \sup_{h \in T_\mathcal{A}(M)} \frac{\| h \|_{\mathcal{A}}}{\| h \|_{\ell_2}} \right ] \\
  & = \mathbb{E}_{g} \left [ \sup_{h \in T_\mathcal{A}(M)} \frac{\| {g} \|_{\mathcal{A}}^* \| h \|_{\mathcal{A}}}{\| h \|_{\ell_2}} \right ] \\
  & \geq \mathbb{E}_{g} \sup_{h \in T_\mathcal{A}(M)} \frac{\langle {g}, h \rangle}{\| h \|_{\ell_2}} = w(B_2^p \cap T_\mathcal{A}(M)).
  \end{align}
  The last step requires the Cauchy Schwartz relationship \eqref{Cauchy.Schwarz}.
\end{proof}

\noindent
\begin{proof}[Proof of Theorem \ref{CE.Low} with Volume Ratio]
For the lower bound using {\bf volume ratio}. Recall the standardized linear inverse model
\begin{align*}
Y = \mathcal{X}(M) + Z, \quad \text{where $Z \sim N(0, \frac{\sigma^2}{n} I_n)$}.
\end{align*}
Without loss of generality, we can assume $\sigma = 1$. The Kullback-Leiber divergence between standardized linear inverse models with different parameters under the Gaussian noise is
\begin{align*}
D_{KL}(M || M') = \frac{n \| \mathcal{X}(M) - \mathcal{X}( M') \|_{\ell_2}^2}{2}.
\end{align*}
Consider the intersection of a cone $T$ with $\ell_2$ ball of radius $\delta$,
$
K(\delta) := B_2^p(\delta) \cap T \subset \mathbb{R}^p,
$
 where $\delta$ will be specified later. Defining $\psi = \sup_{v \in B_2^p \cap T } \| \mathcal{X} (v) \|_{\ell_2}$,
\begin{align*}
\sup_{M, M' \in K(\delta)} D_{KL}(M || M') \leq \frac{n (\| \mathcal{X}(M) \|_{\ell_2} + \| \mathcal{X}( M') \|_{\ell_2})^2}{2}  \leq 2 n \delta^2 \psi^2.
\end{align*}
We have the packing number lower bounded by covering number as follows:
\begin{align}
\label{94}
\mathcal{M}(K(\delta), \epsilon) \geq \mathcal{N}(K(\delta), \epsilon)  \geq \frac{{\sf vol}(K(\delta))}{{\sf vol}(B_2^p(\epsilon))} = \left( \frac{\delta}{\epsilon}  \right)^p\cdot \frac{{\sf vol}(B_2^p \cap T)}{{\sf vol}(B_2^p)}.  
\end{align}
Applying Fano's inequality of Lemma~\ref{Fano.Lemma}, we have
\begin{align*}
\inf_{\hat{M}} \sup_{M \in T} \mathbb{E}_{\cdot | \mathcal{X}} \| \hat{M} - M \|_{\ell_2}^2 & \geq \sup_{\delta >0,0<\epsilon<\delta} \frac{\epsilon^2}{4} \left( 1 - \frac{ 2 n \delta^2 \psi^2 +   \log 2}{ p  \log \left[ \frac{\delta}{\epsilon} \cdot \left( \frac{{\sf vol}(B_2^p \cap T)}{{\sf vol}(B_2^p)} \right) ^{\frac{1}{p}} \right]}  \right).
\end{align*}
If for $a>0$, $0 <b <1$ we choose 
\begin{align*}
 \delta =  \frac{1}{2\psi} \cdot \sqrt{\frac{ap}{n}},\quad \epsilon = \delta \sqrt{b} \cdot \left( \frac{{\sf vol}(B_2^p \cap T)}{{\sf vol}(B_2^p)} \right) ^{\frac{1}{p}}
\end{align*}
then we have
\begin{align*}
\inf_{\hat{M}} \sup_{M \in T} \mathbb{E}_{\cdot | \mathcal{X}} \| \hat{M} - M \|_{\ell_2}^2 & \geq \frac{ p}{\psi^2 n} \cdot \left( \frac{{\sf vol}(B_2^p \cap T)}{{\sf vol}(B_2^p)} \right) ^{\frac{2}{p}}  \cdot \sup_{a>0, 0 <b <1} \frac{ab}{4} \left( 1 - \frac{ p a + 2\log 2}{p \log \frac{1}{b} }  \right) .
\end{align*} 
As shown in \citep[equation (29)]{ma2013volume}, there is a universal constant $c_0>0$ such that
\begin{align}
\inf_{\hat{M}} \sup_{M \in T}  \mathbb{E}_{\cdot | \mathcal{X}} \| \hat{M} - M \|_{\ell_2}^2 & \geq  c_0 \cdot \frac{ p}{\psi^2 n} \cdot \left( \frac{{\sf vol}(B_2^p \cap T)}{{\sf vol}(B_2^p)} \right) ^{\frac{2}{p}}.
\end{align} 
Thus
\begin{align*}
\inf_{\hat{M}} \sup_{M \in T}  \mathbb{E}_{\cdot | \mathcal{X}} \| \hat{M} - M \|_{\ell_2}^2 & \geq \frac{c_0 \sigma^2}{\psi^2 n} \cdot \left( v(B_2^p \cap T) \right)^2 .
\end{align*} 
\end{proof}

\noindent
\begin{proof}[Proof of Theorem \ref{GE.Low} and Corollary \ref{Gen.Cor}]
Theorem \ref{GE.Low} is a special case of Theorem \ref{CE.Low}, combining with Lemma \ref{LIC.Lemma}. Plug in the general convex cone $T$ by local tangent cone $T_{\mathcal{A}}(M)$, then all we need is to upper bound $\psi_{\mathcal{A}}(M,\mathcal{X}) \leq 1+c$ with high probability. This has been done in Lemma \ref{LIC.Lemma}. Corollary \ref{Gen.Cor} is a direct application of Theorem \ref{GE.Low}.
\end{proof}

\section{Proof of Corollaries}
\label{Pf.Cor}
In this section, we denote $\hat{M}$ as the solution to the program \eqref{CST.Min} and the estimation error to be $H = \hat{M} - M$. We refer explicit calculations of Gaussian width for various local tangent cone to Section 3.4 propositions 3.10-3.14 in \cite{chandrasekaran2012convex} for simplicity of our paper.

\noindent
\begin{proof}[Proof of Corollary \ref{Cor.1}]
\label{Pf.Cor.1}
Let's calculate the rate for sparse vector recovery. We will treat the geometric terms $\gamma_{\mathcal{A}}(M), \phi_{\mathcal{A}}(M,\mathcal{X}), \lambda_{\mathcal{A}}(\mathcal{X},\sigma,n)$ separately.

For $\gamma_{\mathcal{A}}(M)$:
We know that $H$ lives in the tangent cone $T_{\mathcal{A}}(M)$. Decompose $H = H_0+H_c$ according to the support of $M$, where $\|H_0\|_{l_0} = s$ and share the same support as $M$. We have 
$$
\| M\|_{\ell_1}+\|H_c\|_{\ell_1} - \| H_0\|_{\ell_1} = \| M + H_c \|_{\ell_1}-\|H_0\|_{\ell_1} \leq \|M + H_0+H_c \|_{\ell_1} \leq \| M \|_{\ell_1} 
$$
which means $\|H\|_{\ell_1}\leq 2\|H_0\|_{\ell_1}, \| H_0 \|_{\ell_2} \leq \| H \|_{\ell_2}$.
Thus we have the following relations
\begin{align*}
\|H\|_{\ell_1} \leq 2\|H_0\|_{\ell_1} \leq 2\sqrt{s} \| H_0 \|_{\ell_2} \leq 2\sqrt{s}\| H \|_{\ell_2}
\end{align*}
Therefore,
$\frac{\|H\|_{\ell_1}}{\| H \|_{\ell_2}} \leq 2\sqrt{s}$
and thus
$\gamma_{\mathcal{A}}(M) \leq 2\sqrt{s}$.

As for $\phi_{\mathcal{A}}(M,\mathcal{X})$:
By the tangent cone calculation, we can prove $\phi_{\mathcal{A}}(M,\mathcal{X}) \geq 1-c$ with high probability if 
$$
n \geq \frac{4[w(B_2^p \cap T_\mathcal{A}(M))+\delta]^2}{c^2} \vee \frac{1}{c} \asymp s \log \frac{p}{s} 
$$ 
The last bound is from Gaussian width upper bound for local tangent cone for $s-$sparse vector.

Lastly, for $\lambda_{\mathcal{A}}(\mathcal{X},\sigma,n)$: We know the operator $\mathcal{X}$ is norm preserving in the sense that $\sup_{v\in \mathcal{A}}\|\mathcal{X}v\|_{\ell_2} \leq 1+c$. and $w(\mathcal{XA})$ is the Gaussian width of $p$ discrete points on Euclidean ball, which is at most $\sqrt{2 \log p}$ due to the behavior of maximum of Gaussian variables. 
Thus we can prove $\lambda \asymp \sigma \sqrt{\frac{\log p}{n}}$ with some proper constant is enough with high probability. 
The corollary then follows fromTheorem \ref{GE.Thm}.
\end{proof}

\noindent
\begin{proof}[Proof of Corollary \ref{Cor.2}]
\label{Pf.Cor.2}
Let's calculate the rate for low rank matrix recovery. We will bound the geometric terms $\gamma_{\mathcal{A}}(M), \phi_{\mathcal{A}}(M,\mathcal{X}), \lambda_{\mathcal{A}}(\mathcal{X},\sigma,n)$ separately.

For $\gamma_{\mathcal{A}}(M)$:
Note $H$ lives in the tangent cone $T_{\mathcal{A}}(M)$. We can write $H = H_0+H_c$ according to the span of $M$ (that is, $M = U D V^T$, $H_0$ is spanned by either $U$ as the row space or $V$ as the column space, and $H_c$ is spanned by $U^\perp$ as the row space and $V^\perp$ as the column space) with the following properties
$$
\| M\|_*+\|H_c \|_* - \| H_0 \|_* = \| M+H_c \|_* - \| H_0 \|_* \leq \| M+H \|_* \leq \| M \|_* .
$$
Thus we have $\text{rank}(H_0) \leq 2r$ and $\|H\|_{*}\leq 2\|H_0\|_{*}, \| H_0 \|_{F} \leq \| H \|_{F}$.
Thus we have the following relations
\begin{align*}
\|H\|_{*} \leq 2\|H_0\|_{*} \leq 2\sqrt{2r} \| H_0 \|_{F} \leq 2\sqrt{2r}\| H \|_{F} 
\end{align*}
We then have
$\frac{\|H\|_{*}}{\| H \|_{F}} \leq 2\sqrt{2r}$
and thus
$\gamma_{\mathcal{A}}(M) \leq 2\sqrt{2r}$.

As for $\phi_{\mathcal{A}}(M,\mathcal{X})$:
By the tangent cone calculation for rank-$r$ matrix, we can prove $\phi_{\mathcal{A}}(M,\mathcal{X}) \geq 1-c$ with high probability if 
$$
n \geq \frac{4[w(B_2^p \cap T_\mathcal{A}(M))+\delta]^2}{c^2} \vee \frac{1}{c} \asymp r(p+q-r)$$ 

At last, for $\lambda_{\mathcal{A}}(\mathcal{X},\sigma,n)$: Rank one matrix manifold is a subspace with dimension $p+q-2$. Thus $w(\mathcal{XA})$ can be bounded by $\sqrt{p+q}$ because the Gaussian width of the $p+q$-dimensional subspace is $\sqrt{p+q}$ and the linear transformation cannot enlarge the dimension. The rank one matrices are of unit Frobenius norm and the $\mathcal{X}$ is norm preserving in the sense that $\sup_{v\in \mathcal{A}}\|\mathcal{X}v\|_{\ell_2} \leq 1+c$. Putting together, we can prove $\lambda \asymp \sigma \sqrt{\frac{p+q}{n}}$ with some proper constant is enough with high probability. 

The corollary follows by putting together the geometric terms and applying the Theorem \ref{GE.Thm}.
\end{proof}

\noindent
\begin{proof}[Proof of Corollary \ref{Cor.3}]
\label{Pf.Cor.3}
As usual, we will bound the geometric terms one at a time.
For $\gamma_{\mathcal{A}}(M)$],
it is clear that $\| H \|_{\ell_2}/\|H\|_{\ell_\infty} \geq 1$ and so $\gamma_{\mathcal{A}}(M) \leq  1.$
As for $\phi_{\mathcal{A}}(M,\mathcal{X})$,
by the tangent cone calculation for sign vector, we can prove $\phi_{\mathcal{A}}(M,\mathcal{X}) \geq 1-c$ with high probability if 
$$
n \geq \frac{4[w(B_2^p \cap T_\mathcal{A}(M))+\delta]^2}{c^2} \vee \frac{1}{c} \asymp p$$

Finally, for $\lambda_{\mathcal{A}}(\mathcal{X},\sigma,n)$: $\mathcal{X}$ is norm preserving in the sense that $\sup_{v\in \mathcal{A}}\|\mathcal{X}v\|_{\ell_2} \leq 1+c$ and $w(\mathcal{XA})$ is the Gaussian width of $2^p$ discrete points on Euclidean ball, which is at most $\sqrt{2 \log 2^p}$ due to the behavior of maximum of Gaussian variables.
Thus we can prove $\lambda \asymp \sigma \sqrt{\frac{p}{n}}$ with some proper constant in front of the order is enough with high probability. The corollary now follows fromTheorem \ref{GE.Thm}.
\end{proof}

\noindent
\begin{proof}[Proof of Corollary \ref{Cor.4}]
\label{Pf.Cor.4}
We bound separately the three geometric terms $\gamma_{\mathcal{A}}(M)$, $\phi_{\mathcal{A}}(M,\mathcal{X})$, and $\lambda_{\mathcal{A}}(\mathcal{X},\sigma,n)$. 
For $\gamma_{\mathcal{A}}(M)$, it is clear that $\| H \|_{F}/\|H\| \geq 1$ and thus
$\gamma_{\mathcal{A}}(M) \leq 1$.
As for $\phi_{\mathcal{A}}(M,\mathcal{X})$,
by the tangent cone calculation for orthogonal matrix, it is easy to show that $\phi_{\mathcal{A}}(M,\mathcal{X}) \geq 1-c$ with high probability if 
$$
n \geq \frac{4[w(B_2^p \cap T_\mathcal{A}(M))+\delta]^2}{c^2} \vee \frac{1}{c} \asymp \frac{m(m-1)}{2}.$$ 

At last, for $\lambda_{\mathcal{A}}(\mathcal{X},\sigma,n)$: Orthogonal matrix manifold is a subspace with dimension $\frac{m(m-1)}{2}$. Thus $w(\mathcal{XA})$ can be bounded by $\sqrt{\frac{m(m-1)}{2}}$ because the Gaussian width of the $\frac{m(m-1)}{2}$-dimensional subspace embedded into Euclidean ball is $\sqrt{\frac{m(m-1)}{2}}$ and linear transformation cannot enlarge the dimension. Note $\mathcal{X}$ is norm preserving in the sense that $\sup_{v\in \mathcal{A}}\|\mathcal{X}v\|_{\ell_2} \leq 1+c$. Thus $\lambda \asymp \sigma \sqrt{\frac{m^2}{n}}$ with some proper constant in front of the order is enough with high probability. Applying the Theorem \ref{GE.Thm}, we can prove the corollary with simple algebra.
\end{proof}

\end{document}